\numberwithin{equation}{section}
\newtheorem{theorem}{\textbf{Theorem}}[section]
\newtheorem{theorem*}{\textbf{Theorem}}
\newtheorem{proposition}[theorem]{\textbf{Proposition}}
\newtheorem{lemma}[theorem]{\textbf{Lemma}}
\newtheorem{question}[theorem]{\textbf{Question}}
\newtheorem{corollary}[theorem]{\textbf{Corollary}}
\newtheorem{remark}[theorem]{\textbf{Remark}}
\newtheorem{definition/proposition}[theorem]{\textbf{Definition/Proposition}}
\providecommand{\customgenericname}{}
\newcommand{\newcustomtheorem}[2]{%
	\newenvironment{#1}[1]
	{%
		\renewcommand\customgenericname{#2}%
		\renewcommand\theinnercustomgeneric{##1}%
		\innercustomgeneric
	}
	{\endinnercustomgeneric}
}
\def\R{\mathbb{R}}
\def\Z{{\mathbb Z}}
\def\C{{\mathbb C}}
\def\D{{\mathbb D}}
\def\Q{{\mathbb Q}}
\def\cA{{\mathcal A}}
\def\cM{{\mathcal M}}
\def\cO{{\mathcal O}}
\def\cS{{\mathcal S}}
\def\rd{{\rm d}}
\def\la{\langle\,}
\def\ra{\,\rangle}
\DeclareMathOperator{\Ima}{im}
\DeclareMathOperator{\ind}{ind}
\DeclareMathOperator{\Hom}{Hom}
\DeclareMathOperator{\Ext}{Ext}
\DeclareMathOperator{\Id}{id}
\DeclareMathOperator{\tor}{Tor}
\DeclareMathOperator{\Crit}{Crit}
\title{On fillings of $\partial(V\times \D)$}
\author{Zhengyi Zhou}
\begin{document}
	\maketitle
\begin{abstract}
We show that any symplectically aspherical/Calabi-Yau filling of $Y:=\partial(V\times \D)$ has vanishing symplectic cohomology for any Liouville domain $V$. In particular, we make no topological requirement on the filling and $c_1(V)$ can be nonzero. Moreover, we show that for any symplectically aspherical/Calabi-Yau filling $W$ of $Y$, the interior $\mathring{W}$ is diffeomorphic to the interior of $V\times \D$ if $\pi_1(Y)$ is abelian and $\dim V\ge 4$. And $W$ is diffeomorphic to $V\times \D$ if moreover the Whitehead group of $\pi_1(Y)$ is trivial. 
\end{abstract}
\section{Introduction}
It was shown by Gromov in his seminal paper \cite{gromov1985pseudo} that symplectically aspherical fillings of $(S^3,\xi_{std})$ are unique symplectically.  Roughly speaking, there exist two orthogonal foliations of holomorphic planes of any symplectically aspherical filling, which recover the diffeomorphism type as well as the symplectic structure. In higher dimensions, Eliashberg-Floer-McDuff \cite{mcduff1990structure} proved that symplectically aspherical fillings of $(S^{2n-1},\xi_{std})$ are diffeomorphic to a ball for $n\ge 3$. The method can be described as considering a moduli space of holomorphic spheres in a partial compactification of the filling, which foliates the filling in a homological sense (some evaluation map has degree $1$).  The homological information turned out to be sufficient to determine the diffeomorphism type by an $h$-cobordism argument. More generally, by a result of Cieliebak \cite{cieliebak2002subcritical}, any subcritical Weinstein domain $W$ splits into $V\times \D$ for a Weinstein domain $V$ and $\D$ is the unit disk in $\C$. Hinted by the natural foliation by the splitting, the ``homological foliation" method was developed by Oancea-Viterbo \cite{oancea2012topology} to show that any symplectically aspherical filling $W$ of a simply connected subcritically fillable contact manifold $Y$ satisfies that $H_*(Y)\to H_*(W)$ is surjective. The homological argument as well as the $h$-cobordism argument were further refined by Barth-Geiges-Zehmisch \cite{barth2016diffeomorphism} to obtain that the diffeomorphism type of symplectically aspherical filling of a simply connected subcritically fillable contact manifold is unique.

In this paper, we study the filling of $\partial(V\times \D)$ for any {\em Liouville} domain $V$ from a Floer theoretic point of view instead of using closed holomorphic curves. The splitting provides with nice Reeb dynamics on the contact boundary. Then the rich algebraic structures on symplectic cohomology allow us to prove the following.
\begin{theorem}\label{thm:main}
	Let $V$ be any Liouville domain, then for any symplectically aspherical/Calabi-Yau (i.e.\ strong filling with $c_1(W)$ torsion) filling $W$ of $Y:=\partial(V\times \D)$, we have the following (all cohomology below is defined over $\Z$).
	\begin{enumerate}
		\item\label{t1} $H^*(W) \to H^*(Y)$ is independent of the filling. In particular, $W$ is necessarily a Liouville filling.
		\item\label{t2} $SH^*(W)=0$ and $SH^*_+(W)$  is independent of the filling.
		\item\label{t3} $W$ is diffeomorphic to $V\times \D$ glued with a homology cobordism from $Y$ to $Y$.
	\end{enumerate}
\end{theorem}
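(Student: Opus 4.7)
My strategy is to use the product structure of $V\times\D$ to control the Reeb dynamics on $Y$, prove $SH^*(W)=0$ for an arbitrary filling, and deduce (1) and (3) from the Viterbo tautological long exact sequence. The preparatory step is to fix a contact form $\alpha$ on $Y$ adapted to a smoothing of the corner of $V\times\D$, arranged so that every Reeb orbit of action below a threshold $T_0$ is an iterate of one of the circular fibers $\gamma_v=\{v\}\times\partial\D$ for $v$ a critical point of an auxiliary Morse function on $V$, while orbits traversing the $\partial V$ direction appear only above $T_0$. The short orbits $\gamma_v$ have a Conley-Zehnder index computable from the product trivialization of $TY$ alone (hence filling-independent), and in the model filling each $\gamma_v$ bounds the disk $\{v\}\times\D$.

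The heart of the argument is part (2). To show $SH^*(W)=0$ I would identify the unit $1\in SH^0(W)$ as a BV-boundary: in the model filling the bounding disk forces a rotational class of $\gamma_v$ to map to $1$ under $\Delta$, and for a general symplectically aspherical or Calabi-Yau filling, neck-stretching along $Y$ localizes the moduli of cylinders computing $\Delta$ inside the symplectization, where the count is intrinsic to $(Y,\alpha)$. Hence the same BV relation holds in $SH^*(W)$, forcing $[1]=0$. The same neck-stretching also controls the differential on $SC^*_+$ in low action, and an inductive action argument then upgrades this to filling-independence of $SH^*_+(W)$ as a whole. I expect this to be the main obstacle: without a bounding disk in $W$ the standard BV-vanishing criterion does not apply, and one must exploit asphericity/Calabi-Yau to exclude unwanted curve breaking during the stretching.

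Given (2), part (1) follows from the tautological long exact sequence
\begin{equation*}
\cdots \to H^*(W) \to SH^*(W) \to SH^*_+(W) \to H^{*+1}(W) \to \cdots
\end{equation*}
which under $SH^*(W)=0$ collapses to $H^*(W)\cong SH^{*-1}_+(W)$, an intrinsic invariant of $Y$. The restriction map $H^*(W)\to H^*(Y)$ is then identified under this isomorphism with the inclusion of the small-action subcomplex $SH^*_+(W,\epsilon)\cong H^*(Y)$ into $SH^*_+(W)$, hence is itself filling-independent. The conclusion that $W$ is necessarily Liouville uses that the strong-filling condition makes $[\omega|_Y]$ exact on $Y$, so $[\omega]\in\ker(H^2(W)\to H^2(Y))$; comparing with the Liouville model $V\times\D$ via the filling-independent kernel forces $[\omega]=0$.

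For part (3), the cohomological isomorphism $H^*(W)\cong H^*(V\times\D)$ with compatible restriction to $H^*(Y)$, combined with the fact that $W$ is now known to be Liouville, allows one to embed a thickened core of $V\times\D$ into $W$ using Cieliebak's splitting (applied to a Weinstein model subdomain) and identify the complement as a compact cobordism $C$ with two $Y$-boundaries. The cohomological data of (1) then forces both inclusions $Y\hookrightarrow C$ to be homology isomorphisms, so $W\cong (V\times\D)\cup_Y C$ with $C$ a homology cobordism from $Y$ to $Y$, as required.
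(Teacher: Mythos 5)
The geometric setup you describe (adapted contact form with short Reeb orbits over the critical points of a Morse function on $V$, action threshold $T_0$, neck-stretching along $Y$) agrees with the paper. But there are two genuine gaps in the central step, and a minor misstep in part (3).

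\textbf{The mechanism for $SH^*(W)=0$.} You invoke the BV operator $\Delta$ and claim that exhibiting $1$ as a ``BV-boundary'' forces $[1]=0$. That implication does not hold: $\Delta(x)=1$ for some $x\in SH^*$ is perfectly compatible with $SH^*\ne 0$ (indeed $\Delta(1)=0$ and $\Delta$ being degree $-1$ with $\Delta^2=0$ gives no contradiction). The mechanism that actually works, and that the paper uses, is via the connecting homomorphism $\delta:SH^*_+\to H^{*+1}(W)$ of the \emph{tautological} long exact sequence: if a class $\eta\in SH^{*,\le a}_+$ has $\delta(\eta)=1$ (after identifying the $+1$-shift), then by exactness $1\mapsto 0$ under $H^*(W)\to SH^*(W)$, i.e.\ the unit of the ring $SH^*(W)$ is zero, hence $SH^*(W)=0$. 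The ring structure, not the BV structure, drives the vanishing. Further, because the theory is only $\Z/2$-graded in general, what you can actually show lands in the image of $\delta$ is not $1$ itself but $1+A$ with $A\in\oplus_{i>0}H^{2i}(W)$; one then needs $1+A$ to be a unit, which is where symplectic asphericity (ensuring $QH^*=H^*$, in which $1+A$ is invertible) enters. Your proposal does not account for this correction, and the asphericality hypothesis would go unused in your argument.

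\textbf{Grading and homological triviality.} Your claim that the Conley-Zehnder indices of the $\gamma_v$ are ``computable from the product trivialization alone (hence filling-independent)'' glosses over the central technical obstruction: with $c_1(V)\ne 0$ there is no global $\Z$-grading, and the Fredholm index of a stretched Floer curve depends on the relative homology class it sweeps out. The paper deals with this by (a) proving that all relevant curves in the stretched moduli spaces are homologically trivial (Proposition \ref{prop:homology}), using a contact-energy argument in a family $Y_\epsilon$ with $\epsilon\to 0$, and (b) replacing the absent $\Z$-grading by an action-level filtration $F_k$ indexed by the Morse index of the underlying critical point, together with a comparison of continuation maps between a product Hamiltonian and a genuinely admissible one (Propositions \ref{prop:nice}--\ref{prop:better}). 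Without these steps you cannot control the dimensions of the moduli spaces after stretching, and therefore cannot conclude that the coefficient of $1$ in $\delta(\eta)$ is the same as in the model.

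\textbf{Part (3).} Your appeal to Cieliebak's splitting theorem is unnecessary and in fact inapplicable: $V$ is only Liouville, not Weinstein. The paper's argument is elementary -- place a small copy $W_0=V\times\D(\epsilon)$ inside the collar neighborhood of $Y=\partial W$ (this exists for any strong filling), let $X$ be the complementary cobordism, and conclude via excision, the filling-independence of $H^*(W)\to H^*(Y)$ from (1), and Lefschetz duality that $X$ is a homology cobordism from $Y$ to $Y$. No symplectic-topology input beyond (1) is needed at this stage.
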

Note that the symplectically aspherical or the Calabi-Yau condition is necessary, since we can always blow up a symplectic filling to change the topology of $W$. The general Liouville case was also discussed in \cite{barth2016diffeomorphism}, where surjectivity on homology was obtained,  which is a corollary of \eqref{t1} above by the universal coefficient theorem. Theorem \ref{thm:main} puts many restrictions on the diffeomorphism type of the filling. Regarding the diffeomorphism type of the filling $W$, one can ask the following three questions.
\begin{enumerate}
	\item\label{1} Is the diffeomorphism type of the open manifold $\mathring{W}$ unique?
	\item\label{2} Is the diffeomorphism type of the manifold with boundary $W$ unique?
	\item\label{3} Is the diffeomorphism type of the manifold with boundary $W$ unique relative to the boundary? i.e.\ is there a diffeomorphism $\phi:W\to V\times \D$ such that $\phi|_{\partial W} = \Id$.
\end{enumerate}
As we shall see, \eqref{1} is related to that whether the homology cobordism in \eqref{t3} of Theorem \ref{thm:main} is an $h$-cobordism. \eqref{2} is related to that whether it is an $s$-cobordism. While \eqref{3} is beyond the reach of the method in this paper. It turns out that we can tackle question \eqref{1} under the assumption that $\pi_1(Y)$ is abelian, as we can study the Floer theory of covering spaces. In particular, we have the following.
\begin{theorem}\label{thm:diff}
	Under the same assumption in Theorem \ref{thm:main}, if in addition $\pi_1(Y)$ is abelian, then $\mathring{W}$ is diffeomorphic to $\mathring{V}\times \mathring{\D}$. If moreover the Whitehead group of $\pi_1(Y)$ is trivial, then $W$ is diffeomorphic to $V\times \D$.
\end{theorem}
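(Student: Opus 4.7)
The plan is to use Theorem \ref{thm:main}\eqref{t3} as the starting point: it produces a decomposition $W \cong (V\times \D)\cup_Y H$ where $H$ is a homology cobordism from $Y$ to $Y$. Since $\dim V \ge 4$ gives $\dim H = \dim W \ge 6$, the $h$- and $s$-cobordism theorems are available in the range we need. The strategy is to promote $H$ from a $\Z$-homology cobordism to an $h$-cobordism; once accomplished, $\mathring H$ is diffeomorphic to $Y \times (0,1)$ and gluing yields $\mathring W \cong \mathring V \times \mathring \D$. When additionally $\wh(\pi_1(Y)) = 0$, every $h$-cobordism is an $s$-cobordism, so $H \cong Y \times [0,1]$ and therefore $W \cong V \times \D$.

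Upgrading $H$ to an $h$-cobordism requires two inputs: (a) both inclusions $Y\hookrightarrow H$ induce compatible isomorphisms on $\pi_1$; and (b) the relative homology $H_*(H,Y;\Z[\pi_1(H)])$ vanishes. These are local-coefficient refinements of the homology statements of Theorem \ref{thm:main}. To establish them, the plan is to run the Floer-theoretic arguments of Theorem \ref{thm:main} on suitable finite covers of $W$ and then combine the results. The hypothesis that $\pi_1(Y)$ is abelian enters in an essential way here: $\pi_1(Y)$ is the direct limit of its finite abelian quotients, so it suffices to control the relevant homology groups with $\Z[G]$-coefficients for every finite abelian quotient $G$, where ordinary Floer theory applies to the compact covers $\widetilde W_G$.

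Concretely, I would proceed as follows. First, observe that for any finite abelian quotient $G$ of $\pi_1(Y)$ the cover $\widetilde Y_G \to Y$ extends to a finite cover $\widetilde W_G \to W$: the $S^1$-factor loop in $V \times S^1 \subset Y$ bounds a disk in $V\times \D \subset W$ via the splitting from Theorem \ref{thm:main}\eqref{t3}, so it dies in $\pi_1(W)$, and the other classes in $\pi_1(Y)$ are controlled by the product structure of $V\times\D$. Second, identify $\widetilde Y_G$ with $\partial(\widetilde V_G \times \D)$ for an appropriate finite cover $\widetilde V_G$ of $V$ (abelian covers disentangle cleanly into a cover of $V$ together with a covering along the $S^1$-direction), so that $\widetilde W_G$ becomes a symplectically aspherical/Calabi-Yau filling to which Theorem \ref{thm:main} directly applies. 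Third, apply Theorem \ref{thm:main} to $\widetilde W_G$ to obtain a cover-level splitting and homology cobordism on top of the pullback of $H$. Fourth, combine these as $G$ ranges over all finite abelian quotients to deduce vanishing of $H_*(H,Y;\Z[\pi_1(Y)])$; the $\pi_1$-isomorphism statement falls out in the process from comparing Euler characteristics and covers.

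The main obstacle I foresee is the second step above: rigorously identifying $\widetilde Y_G$ with the boundary of a Liouville product $\widetilde V_G \times \D$ and checking that the Floer-theoretic machinery of Theorem \ref{thm:main}, in particular the Reeb orbit analysis on $\partial V \times \D \cup V \times S^1$ and the ring/module structure on $SH^*$ that drives the splitting, transports coherently to each cover and is compatible as $G$ varies. Once this is in place, the twisted homology vanishing gives the $h$-cobordism property of $H$, and invoking the $h$-cobordism (resp.\ $s$-cobordism under $\wh(\pi_1(Y))=0$) theorem finishes the proof.
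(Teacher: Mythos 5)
Your overall strategy agrees with the paper at the top level: upgrade the homology cobordism from Theorem~\ref{thm:main}\eqref{t3} to an $h$-cobordism, then invoke the $s$-cobordism theorem when $\wh(\pi_1(Y))=0$ and the Mazur trick for the open statement $\mathring W\cong\mathring V\times\mathring\D$. However, the key step --- promoting the $\Z$-homology cobordism to an $h$-cobordism --- is where your proposal has a genuine gap, and it is also where the paper takes a different route.

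First, the claim that ``$\pi_1(Y)$ is the direct limit of its finite abelian quotients'' is false: $\Z$ is abelian but is torsion-free, while a direct limit of finite groups is torsion. More seriously, even if one could access $H_*(H,\partial W_0;\Z[G])$ for all finite quotients $G$ of $\pi_1$, there is no general principle that forces $H_*(H,\partial W_0;\Z[\pi_1])=0$ from the vanishing over all finite quotients; when $\pi_1$ is infinite (e.g.\ $\Z^k$), the $\Z[\pi_1]$-module structure is not determined by its finite-quotient reductions. So the fourth step of your plan, ``combine these as $G$ ranges over all finite abelian quotients,'' has no justification and would fail.

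The paper instead works directly with the \emph{universal cover}. It uses the fact (from the author's prior work) that symplectic cohomology of a possibly noncompact covering space is well defined, with a cochain complex generated by lifts of orbits. The role of the abelian hypothesis is more elementary than you supposed: since $\pi_1(Y)\to H_1(Y)$ is an isomorphism when $\pi_1(Y)$ is abelian, the $H_1$-isomorphism coming from Theorem~\ref{thm:main}\eqref{t1} already gives injectivity of $\pi_1(Y)\to\pi_1(W)$; surjectivity then follows from running the symplectic-cohomology-of-the-universal-cover argument. This yields Proposition~\ref{prop:fundamental}: $\pi_1(Y)\to\pi_1(W)$ is an isomorphism and $H^*(\widetilde W)\to H^*(\widetilde Y)$ is injective and filling-independent. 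There is then real homological algebra to do --- which your proposal silently skips --- because the Floer output is ordinary cohomology $H^*(\widetilde X,\widetilde{\partial W_0})$, not the twisted homology $H_*(X,\partial W_0;\Z[\pi_1])$ needed for the $h$-cobordism criterion. The paper passes from $H^*(\widetilde X,\widetilde{\partial W_0})=0$ to $H_*(X,\partial W_0;\Z[\pi_1])=0$ via Lemma~\ref{lemma:ext}, then to $H^*(X,\partial W_0;\Z[\pi_1])=0$ via Lemma~\ref{lemma:univ}, and finally to $H_*(X,\partial W;\Z[\pi_1])=0$ by Lefschetz duality. Without this chain (or a replacement), the $h$-cobordism conclusion is not established. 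Your $\pi_1$-isomorphism statement and the subsequent van Kampen argument are consistent with the paper, but the covering-space engine and the duality bookkeeping need to be replaced by the universal-cover argument rather than a finite-cover approximation.
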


Roughly speaking, the proof of Theorem \ref{thm:main} considers the same holomorphic curves in \cite{barth2016diffeomorphism,mcduff1990structure,oancea2012topology}. In some sense the only symplectic information used in \cite{barth2016diffeomorphism,mcduff1990structure,oancea2012topology} was the holomorphic curve with a point constraint, which corresponds to the fact that the evaluation map has degree $1$. In order to get homological information about the filling through duality, one needs to assume that $V$ is Weinstein. In our approach, such curve is again essential as it is responsible for the vanishing of symplectic cohomology. However, we will consider other holomorphic curves with various constraints from $Y$ packaged in Floer theory. Then the ring structure along with the quantitative nature of Floer theory implies Theorem \ref{thm:main}. In addition, the symplectic cohomology framework is flexible enough to work with strong fillings, hence the theorem also applies to Calabi-Yau fillings. By a result of Eliashberg \cite{eliashberg1990filling} and McDuff \cite{mcduff1990structure}, any strong filling of $(S^{3},\xi_{std})$ is a blow-up of the standard ball. Hence Calabi-Yau fillings of $(S^3,\xi_{std})$ are also unique. Our result can be viewed as a generalization of that.

Theorem \ref{thm:main} also has certain overlap to the results in \cite{zhou2019symplectic}, i.e. those $V$ with vanishing first Chern class, in particular, boundary of subcritically fillable contact manifolds with vanishing first Chern class. The main difference is the following, \cite{zhou2019symplectic} uses the fact those contact manifolds has only the trivial $\Z$-graded augmentation by degree reasons, hence can be applied to a very different class of contact manifolds called asymptotically dynamically convex manifolds, which includes boundaries of flexible Weinstein domains with vanishing first Chern class \cite{lazarev2016contact} and links of isolated terminal singularities \cite{mclean2016reeb}. Moreover, similar phenomena extend to many other structures on symplectic cohomology \cite{zhou2019symplectic2}. However, the index consideration brings a major drawback that we need to require the filling to have mild topology properties ($c_1=0$ and $\pi_1$ injective) in order to obtain a $\Z$-grading. On the other hand, we will not need any grading requirement for Theorem \ref{thm:main}. In fact, when we drop the grading requirement, there are always different augmentations (coming from blow-ups). Unlike the strategy of exploiting the uniqueness of augmentations in \cite{zhou2019symplectic2,zhou2019symplectic}, we will make use of the nice Reeb dynamics induced from the splitting setup and explore structures that are independent of augmentations. Such changes of perspectives result in many differences compared to \cite{zhou2019symplectic}. In addition to dropping the topological assumptions in \cite{zhou2019symplectic}, the invariance of $SH^*_+(W)$ is not a priori fact but rather a posteriori consequence of $SH^*(W)=0$. The upshot is that the topology is seen by all the Reeb orbits wrapping around $V$ once and a certain map from the (filtered) positive symplectic cohomology does not depend on augmentations. 

In fact, our proof shows that $1$ is in the image of $SH_+^*(W)\to H^{*+1}(Y)\to H^0(Y)$ for any strong filling $W$, assuming symplectic cohomology and its positive version is well-defined for general strong fillings. Such phenomena, studied in \cite{zhou2019symplectic2,zhou2019symplectic}, has gone beyond the situation of having only the trivial ($\Z$-graded) augmentation. Then the symplectically asphericality is used to show $1+A$ is a unit in the quantum cohomology $QH^*(W;\Lambda)$ for $A\in\oplus_{i\ge 1} H^{2i}(W;\Lambda)$, which is crucial for the vanishing of symplectic cohomology. As by \cite{ritter2014floer}, when the filling is not symplectically aspherical, we do have zero divisors in the form of $1+A$ even for the standard contact sphere $(S^{2n-1},\xi_{\mathrm{std}})$. On the other hand, it seems that $SH_+^*(W)\to H^{*+1}(Y)$ is also independent of strong fillings, at least it holds for the standard ball and its blow-up $\cO(-1)$ as fillings of $(S^{2n-1},\xi_{\mathrm{std}})$. In the Calabi-Yau case, $A$ is necesarrily zero by degree reasons and $1$ is always a unit in $QH^*(W;\Lambda)$. In general, Theorem \ref{thm:main} holds as long as we know that there is no zero divisor of $QH^*(W;\Lambda)$ in the form of $1+A$ for $A\in \oplus_{i>0}H^{2i}(W;\Lambda)$, where $\Lambda$ is the Novikov field. In particular, we have the following.

\begin{corollary}\label{cor:strong}
	Let $W$ be a (semi-positive) strong filling of $Y:=\partial (V\times \D)$ for $\dim V=2n$. If there is no embedded symplectic sphere $S$ with $2-n\le c_1(S)\le 2n-1$, then $W$ is a Liouville filling. 
\end{corollary}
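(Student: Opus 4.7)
The plan is to combine the reduction spelled out in the paragraph preceding the statement---that Theorem~\ref{thm:main} applies to any strong filling $W$ for which $QH^*(W;\Lambda)$ contains no zero divisor of the form $1+A$ with $A \in \bigoplus_{i>0} H^{2i}(W;\Lambda)$---with a geometric analysis that rules out such zero divisors under the sphere hypothesis.

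Under semi-positivity, $QH^*(W;\Lambda)$ is defined in the usual way: the classical cup product is corrected by Gromov--Witten counts of $J$-holomorphic spheres in nonzero spherical homology classes, the relevant moduli spaces being pseudocycles of the expected dimension by the standard treatment of simple curves and the stratification of the multiply covered locus. A quantum correction coming from a class $C$ shifts degrees by $-2c_1(C)$. Writing out the possible relations $(1+A) *_Q X = 0$ with $X \ne 0$, using that $H^*(W;\Lambda)$ is concentrated in degrees $\le 2n$ (since $W$ is a compact $(2n+2)$-manifold with nonempty boundary), and combining with the semi-positivity restriction that $c_1(C)\ge 0$ or $c_1(C)\le 1-n$ for any nontrivial spherical class, one checks that the relevant values of $c_1(C)$ lie in the range $[2-n,\, 2n-1]$.

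The geometric step is then to upgrade a nonvanishing Gromov--Witten invariant in such a class $C$ to the existence of an embedded symplectic sphere with $c_1$ in the same range. For a generic $\omega$-compatible $J$, simple $J$-holomorphic spheres are transversely cut out of the expected dimension; positivity of intersections together with the semi-positivity bound on the number of double points allows one to perturb a simple $J$-holomorphic sphere in a class $C'$ (with $\omega(C')\le\omega(C)$ and $c_1(C')\le c_1(C)$) to a smooth symplectic embedding, which contradicts the hypothesis. Multiple covers descend to their underlying simple curves in classes of strictly smaller energy and at most the same Chern number, so they reduce to the simple case.

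Once the quantum and classical products are identified on $\bigoplus_{i>0} H^{2i}(W;\Lambda)$, any such $A$ is nilpotent (any $(n+1)$-fold cup product in positive even degrees exceeds the top degree $2n$), so $1+A$ has inverse $\sum_{k=0}^{n}(-A)^k$ and in particular is not a zero divisor. The extended Theorem~\ref{thm:main} then applies and forces $W$ to be Liouville. I expect the main obstacle to be the geometric reduction from a nonzero Gromov--Witten invariant to an honest embedded symplectic sphere in the correct Chern range on an open filling, which requires a careful adaptation of closed semi-positive transversality and perturbation techniques to the cylindrical-end setting.
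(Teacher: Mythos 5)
Your proposal follows the same route as the paper's Proposition~\ref{prop:sphere}: reduce the corollary to the statement that $1+A$ (with $A\in\oplus_{i>0}H^{2i}(W;\Lambda)$) is never a zero divisor of $QH^*(W;\Lambda)$, and then rule out the Gromov--Witten corrections that would produce such a zero divisor by the sphere hypothesis. The issues are in the middle step, and one of them is a genuine gap.

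You assert that ``one checks that the relevant values of $c_1(C)$ lie in the range $[2-n,2n-1]$'' and then, separately, that once the quantum product is undeformed on $\oplus_{i>0}H^{2i}$ the element $A$ is nilpotent, so $1+A$ is a unit. Neither of these is complete as stated, and together they hide the key step of the paper's proof. Suppose $(1+A)*_Q(b+B)=0$ with $b\in\Lambda$ and $B\in\oplus_{i>0}H^{2i}(W;\Lambda)$. The degree-$0$ component of this relation reads $b+\langle A*_Q B\rangle_0=0$, and $\langle A*_Q B\rangle_0$ is a sum of point-constrained GW invariants $\langle A_a,B_b,[\mathrm{pt}]\rangle_C$. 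For these the dimension formula gives $2a+2b+(2n+2)=2n+2+2c_1(C)$, i.e. $c_1(C)=a+b\le 2n$, \emph{not} $\le 2n-1$; so $c_1(C)=2n$ is not excluded by the degree arithmetic and is not covered by the corollary's hypothesis. The paper disposes of this term not by counting degrees but by the maximum principle: the point constraint may be pushed into a collar near $\partial W$ where, for a cylindrical almost complex structure, no nonconstant holomorphic sphere can reach, so the invariant vanishes identically and hence $b=0$. Only after this reduction does the degree arithmetic on $\oplus_{i>0}H^{2i}\otimes\oplus_{i>0}H^{2i}\to\oplus_{i>0}H^{2i}$ (three even inputs of degree between $2$ and $2n$) give $6\le 2n+2+2c_1(C)\le 6n$ and hence $c_1(C)\in[2-n,2n-1]$. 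Your nilpotence argument implicitly assumes the quantum product of two positive even classes has no degree-$0$ component, but that is precisely what has to be established, and without it $A$ need not be nilpotent in $QH^*(W;\Lambda)$. In short, you must make the maximum-principle/point-constraint argument explicit; it is the linchpin of the proof, and semi-positivity alone does not supply it (semi-positivity only narrows the Chern range to $[\max(0,2-n),2n-1]$ once the degree arithmetic is in place, and is otherwise needed only to define the GW invariants).

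Two smaller inaccuracies: $H^*(W;\Lambda)$ is \emph{not} concentrated in degrees $\le 2n$ for a $(2n+2)$-manifold with boundary, since $H^{2n+1}(W)$ may be nonzero (this does not matter here because only even classes enter, but the reduction from a general zero divisor to an even one should be noted via the $\Z/2$-grading, as the paper does). Also, for a multiply covered class $C=kC'$, your inequality $c_1(C')\le c_1(C)$ only holds when $c_1(C)\ge 0$; in the perturbation step it is cleaner to argue as the paper does, namely that for $n\ge 2$ any (possibly nodal) holomorphic sphere in class $C$ can be generically perturbed to an embedded symplectic sphere in the \emph{same} homology class (dimension $\ge 6$), while for $n=1$ one uses that the deforming curve is necessarily somewhere injective.
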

The semi-positive assumption is only for the definition of symplectic cohomology without using any virtual technique, and should be irrelevant once one constructs (positive) symplectic cohomology for general strong fillings. Note that when $n=1$, the assumption is equivalent to that $W$ is minimal, i.e.\ there is no exceptional sphere in $W$. In such case, Corollary \ref{cor:strong} is also implied by a result of Wendl \cite{wendl2010strongly}, since $V\times \D$ is subcritical and $\partial (V\times \D)$ is supported by a planar open book. In dimensions higher than $4$, there are many other operations to modify a filling other than blowing up a point. Corollary \ref{cor:strong} implies that any birational surgery that we can apply on $W$ to destroy symplectically asphericality will create symplectic spheres with small first Chern class.

\subsection*{Acknowledgement} The author is supported by the National Science Foundation Grant No. DMS1638352. It is a great pleasure to acknowledge the Institute for Advanced Study for its warm hospitality. The author would like to thank the referee for suggestions that improve the exposition.

\section{Symplectic cohomology}
\subsection{Contact forms on $Y$}\label{ss:contact}
Let $\lambda$ denote the Liouville form on the Liouville domain $V$. In the following, we describe a special contact form on $Y:=\partial (V\times \D)$, which allows us to single out the Reeb orbits corresponding to critical points for a Morse function on $V$.  Let $r$ denote the collar coordinate on $\partial V$, such that the completed Liouville manifold $(\widehat{V},\widehat{\lambda})$ is given by $V\cup \partial V \times (1,\infty)_r$ with $\widehat{\lambda}=\lambda$ on $V$ and $\widehat{\lambda}=r(\lambda|_{\partial V})$ on $\partial V \times (1,\infty)_r$. Note that by following the negative flow of the Liouville vector field $r\partial_r$, the $r$ coordinate continues to exist in the interior of $V$ for $r\in (0,1)$. We first fix a Morse function $f$ on $V$, such that the following holds.
\begin{enumerate}
	\item $\min f =0$ and $\max f=1$ which is attained at $\partial V$. 
	\item For $r\in (\frac{1}{2}, 1)$, $f$ only depends on $r$ and 	$1\ge \partial_r f > 0$ and $\partial_r f|_{r=1}=1$.
	\item $f$ is self-indexing in the sense that $f(p)=\frac{1}{2n+1-\ind p}$ for a critical point $p$ with Morse index $\ind p>0$ and $f$ has a unique local minimum $0$ (hence the global minimum). We may assume $\ind p \le 2n-1$, since $V$ is an open manifold.
\end{enumerate}
Then we can find a smooth family of decreasing functions $g_{\epsilon}:[0,\frac{1+\epsilon}{1+\epsilon f(\frac{1}{2})}]\to [\frac{1}{2},\frac{3}{4}]$ such that the following holds.
\begin{enumerate}
	\item $g_{\epsilon}(x) = f^{-1}(\frac{1+\epsilon}{\epsilon x}-\frac{1}{\epsilon})$ for $x$ near $\frac{1+\epsilon}{1+\epsilon f(\frac{1}{2})}$, hence $g_{\epsilon}(\frac{1+\epsilon}{1+\epsilon f(\frac{1}{2})})=\frac{1}{2}$.
	\item $g_{\epsilon}(x)=\frac{3}{4}$ when $x \le \frac{1}{4}$.
\end{enumerate}

Let $\rho$ denote the radical coordinate in $\C$ and $\D(r)$ denote the disk of radius $r$. In this paper, we fix $\frac{1}{2\pi}\rho^2 \rd \theta$ as the Liouville form on $\C$. With the data above, for $\epsilon>0$, we have a contact type hypersurface $Y_\epsilon$ in the completed Liouville manifold $(\widehat{V}\times \C, \widehat{\lambda}\oplus \frac{1}{2\pi}\rho^2 \rd \theta)$ given as follows:
\begin{enumerate}
	\item in the region $V\times \C$, $Y_{\epsilon}$ is given by $\rho^2 = \frac{1+\epsilon}{1+\epsilon f}$,
	\item in the region $\widehat{V} \times \D(\sqrt{\frac{1+\epsilon}{1+\epsilon f(\frac{1}{2})}})$, $Y_{\epsilon}$ is given by $r=g_{\epsilon}(\rho^2)$. 
\end{enumerate}
Then our conditions on $g_{\epsilon}$ guarantee that $Y_{\epsilon}$ closes up to a smooth closed hypersurface, which can be described pictorially as below. In particular, $Y_{\epsilon}$ is a smooth family of hypersurfaces for $\epsilon \ge 0$.
\begin{figure}[H]
\begin{center}
	\begin{tikzpicture}[scale=1.5]
	\draw (0,0) to (4,0) to (4,2) to (0,2) to (0,0); 
	\draw[dashed] (0,0) to [out=320, in=180] (2,-0.5) to [out=0, in=220] (4,0);
	\draw[dashed] (0,2) to [out=40, in=180] (2,2.5) to [out=0, in=140] (4,2);
	\draw (0.5,0) to [out=270, in =170] (1,-0.45) to [out=-10, in =180 ](2,-0.5);
	\draw (0.5,2) to [out=90, in =190] (1,2.45)  to [out=10, in =180 ] (2, 2.5);
	\draw (3.5,0) to [out=270, in =10] (3,-0.45)  to [out= 190, in = 0 ] (2, -0.5);
	\draw (3.5,2) to [out=90, in =-10] (3,2.45)  to [out= 170, in = 0 ] (2, 2.5);
	\draw (0.5,0) to (0.5,2);
	\draw (3.5, 0) to (3.5, 2);
	\node at (2,1) {$V\times \D$};
	\node at (2,2.7) {$\rho^2 = \frac{1+\epsilon}{1+\epsilon f}$};
	\node at (0.35, 1) {$Y_{\epsilon}$};
	
	\draw (6,0) to (10,0) to (10,2) to (6,2) to (6,0); 
	\draw (7,0) to [out=180, in=270] (6.5,0.5) to (6.5,1.5) to [out=90, in=180] (7, 2);
	\draw (9,0) to [out=0, in=270] (9.5,0.5) to (9.5,1.5) to [out=90, in=0] (9, 2);  
	\node at (8,1) {$V\times \D$};
	\node at (6.35,1) {$Y_0$};
	
	\draw[->] (4.5, 1) to (5.5,1);
	\node at (5,1.2) {$\epsilon \to 0$};
	\end{tikzpicture}
	\caption{The contact hypersurface $Y_{\epsilon}$}
\end{center}
\end{figure}
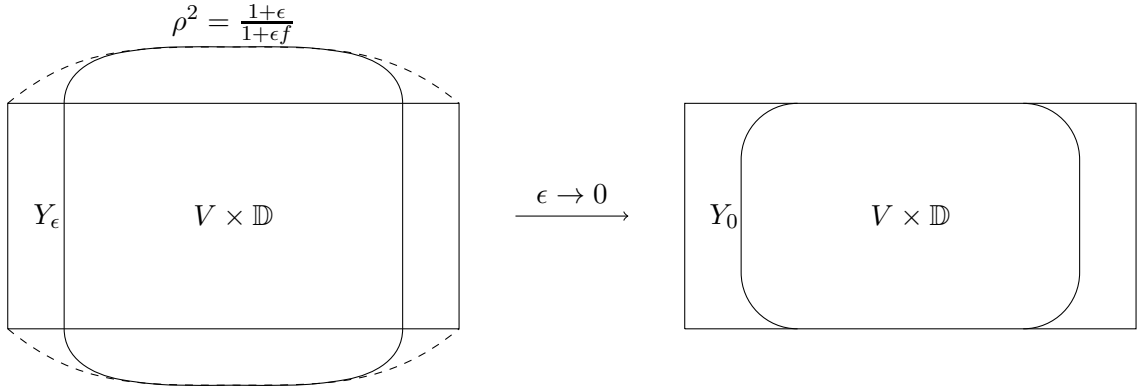

\begin{proposition}
	For $\epsilon$ sufficiently small, $Y_{\epsilon}$ is a contact type hypersurface, i.e.\ the restriction of the Liouville form $\widehat{\lambda}\oplus \frac{1}{2\pi}\rho^2\rd \theta$ gives a contact form on $Y_{\epsilon}\simeq Y$.
	\end{proposition}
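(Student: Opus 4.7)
The plan is to reduce the assertion to the standard fact that a cooriented hypersurface in a Liouville manifold is of contact type, with contact form the restriction of the primitive, if and only if the dual Liouville vector field is transverse to it. The Liouville vector field of the product splits as $Z=Z_V+\tfrac{\rho}{2}\pd_\rho$, where $Z_V$ is the Liouville vector field on $(\widehat{V},\widehat{\lambda})$; note $\tfrac{\rho}{2}\pd_\rho(\rho^2)=\rho^2$ and $Z_V(r)=r$ wherever the coordinate $r$ is defined by the backwards Liouville flow. The two local descriptions of $Y_\epsilon$ glue to a smooth closed embedded hypersurface by the prescribed identity $g_\epsilon(x)=f^{-1}\bigl(\tfrac{1+\epsilon}{\epsilon x}-\tfrac{1}{\epsilon}\bigr)$ on the overlap, so it suffices to check transversality on each piece.

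On the region $V\times \C$, with defining function $H_1:=\rho^2(1+\epsilon f)-(1+\epsilon)$, a direct calculation yields
\[
Z(H_1)=\rho^2\bigl(1+\epsilon(f+Z_V(f))\bigr).
\]
On the region $\widehat{V}\times \D\bigl(\sqrt{(1+\epsilon)/(1+\epsilon f(\tfrac{1}{2}))}\bigr)$, with defining function $H_2:=r-g_\epsilon(\rho^2)$ and $r$ constrained to $[\tfrac{1}{2},\tfrac{3}{4}]$, one gets
\[
Z(H_2)=r-g_\epsilon'(\rho^2)\,\rho^2\ge r>0,
\]
using that $g_\epsilon$ is weakly decreasing. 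So transversality on region (2) is automatic with no smallness of $\epsilon$ required.

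The main (and essentially the only) issue is to make the factor $1+\epsilon(f+Z_V(f))$ in $Z(H_1)$ strictly positive. Since $V$ is compact and $f,Z_V$ are smooth, $|f+Z_V(f)|\le C$ for some constant $C$, and any $\epsilon<1/C$ works; note that near $\pd V$ we have $Z_V=r\pd_r$ and $\pd_r f>0$, so the quantity is actually positive in that region and the only delicate place is a neighbourhood of the skeleton of $V$, where the uniform bound is still automatic from compactness. Combining the two estimates, $Z$ is outward transverse to $Y_\epsilon$ everywhere for $\epsilon$ small, so $(\widehat{\lambda}\oplus \tfrac{1}{2\pi}\rho^2\,\rd\theta)|_{Y_\epsilon}$ is a contact form, as claimed.
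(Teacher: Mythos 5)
Your proof is correct and follows essentially the same route as the paper: both reduce the contact-type condition to transversality of the split Liouville vector field $Z_V+\tfrac{\rho}{2}\pd_\rho$ and arrive at the identical positivity conditions (the paper's $\frac{1+\epsilon}{(1+\epsilon f)^2}(1+\epsilon f+\epsilon X_\lambda(f))>0$ is your $1+\epsilon(f+Z_V(f))>0$ after clearing the positive prefactor, and its $g_\epsilon(x)-x\,\pd_x g_\epsilon(x)>0$ is your $r-g_\epsilon'(\rho^2)\rho^2>0$ on $Y_\epsilon$). The only cosmetic difference is that you work directly with defining functions and spell out the compactness bound $|f+Z_V(f)|\le C$, whereas the paper invokes an equivalent criterion from an external reference.
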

\begin{proof}
	In view of \cite[Remark 6.5]{zhou2019symplectic}, it is sufficient to prove that 
	$\frac{1+\epsilon}{1+\epsilon f}-X_{\lambda}(\frac{1+\epsilon}{1+\epsilon f})>0$ on $V$ for the Liouville vector $X_{\lambda}$ and $g_{\epsilon}(\rho^2)-\frac{1}{2}\rho \partial_{\rho}g_{\epsilon}(\rho^2)=g_{\epsilon}(x)-\partial_x g_{\epsilon}(x)>0$ for $x=\rho^2$ on $\D(\sqrt{\frac{1+\epsilon}{1+\epsilon f(\frac{1}{2})}})$. Note that 
	$$\frac{1+\epsilon}{1+\epsilon f}-X_{\lambda}(\frac{1+\epsilon}{1+\epsilon f}) = \frac{1+\epsilon}{1+\epsilon f}+\frac{(1+\epsilon)\epsilon X_{\lambda}(f)}{(1+\epsilon f)^2}=\frac{1+\epsilon}{(1+\epsilon f)^2}(1+\epsilon f+\epsilon X_{\lambda}(f)),$$
	which is positive when $\epsilon$ is small enough. Since $g_{\epsilon}(x)>0$ and $\partial_x g_{\epsilon}(x)<0$, we also have $g_{\epsilon}(x)-\partial_x g_{\epsilon}(x)>0$.
\end{proof}
In the following, we assume that the Reeb dynamics on $\partial V$ are non-degenerate and the shortest Reeb orbit on $\partial V$ has period at least $5$, this can always be achieved by perturbing and scaling the Liouville form. Then we have the following.
\begin{proposition}
	For $\epsilon$ small enough and positive, any Reeb orbit on $Y_\epsilon$ with period $<2$ must be the circle $\gamma_p$ over a critical point $p$ of $f$ and is non-degenerate. Moreover, the period is given by $\frac{1+\epsilon}{1+\epsilon f(p)}$. When $\epsilon=0$, any Reeb orbit on $Y_0$ with period $<2$ is a circle over a point in $V\backslash\left(\partial V \times (\frac{1}{2},1]\right)$.
\end{proposition}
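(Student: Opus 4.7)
The plan is to compute the Reeb vector field of $\alpha$ on each of the two patches of $Y_\epsilon$ and to bound Reeb periods patchwise. On region $1$, parametrize by $(x,\theta)\in V\times S^1$ so that $\alpha = \lambda+\tfrac{1+\epsilon}{2\pi(1+\epsilon f)}d\theta$. Writing $R_\epsilon = Y + b\,\partial_\theta$ with $Y \in TV$, and using the identity $\lambda(X_f)=-X_\lambda f$, the Reeb conditions $\alpha(R_\epsilon)=1$ and $\iota_{R_\epsilon}d\alpha=0$ force $Y=c\,X_f$ and give
\[
b=\frac{2\pi(1+\epsilon f)^2}{(1+\epsilon)(1+\epsilon f+\epsilon X_\lambda f)},\qquad c=-\frac{\epsilon}{1+\epsilon f+\epsilon X_\lambda f}.
\]
On region $2$, parametrize by $(p',\rho,\theta)$ so that $\alpha=g_\epsilon(\rho^2)\alpha_\partial+\tfrac{\rho^2}{2\pi}d\theta$; the analogous procedure yields $R_\epsilon = a\,R_\partial + C\,\partial_\theta$ with
\[
a=\frac{1}{g_\epsilon(\rho^2)-\rho^2 g_\epsilon'(\rho^2)},\qquad C=-\frac{2\pi g_\epsilon'(\rho^2)}{g_\epsilon(\rho^2)-\rho^2 g_\epsilon'(\rho^2)},
\]
where $R_\partial$ is the Reeb field of $\alpha_\partial=\lambda|_{\partial V}$; at $\rho=0$ this degenerates smoothly to $R_\epsilon=(4/3)R_\partial$.

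On region $2$, $\rho$ is preserved along Reeb orbits, so $a$ is constant along the orbit and closure forces $aT$ to be a positive period of $R_\partial$. Since $g_\epsilon\geq \tfrac{1}{2}$ and $-\rho^2 g_\epsilon'\geq 0$, we have $a\leq 2$, giving $T\geq 5/a\geq 5/2>2$ for every $\epsilon\geq 0$. Hence every short Reeb orbit must live on region $1$.

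For region $1$ with $\epsilon>0$ small, the $V$-projection of an orbit satisfies $\dot x=c(x)X_f(x)$, so $f$ is constant along it and $|\dot x|=O(\epsilon)$ uniformly. At a critical point $p$ of $f$, $X_f(p)=0$ and $X_\lambda(f)(p)=0$, so the orbit is $\gamma_p=\{p\}\times S^1$ with $\dot\theta=b(p)=\tfrac{2\pi(1+\epsilon f(p))}{1+\epsilon}$, giving $T=(1+\epsilon)/(1+\epsilon f(p))\in[1,1+\epsilon]\subset[1,2)$. Non-degeneracy of $\gamma_p$ follows from linearizing: in a Darboux chart at $p$ the transverse return map equals $\exp\bigl(-\tfrac{\epsilon(1+\epsilon)}{(1+\epsilon f(p))^2}J_0\,\mathrm{Hess}(f)_p\bigr)$, which is transverse to the identity for small $\epsilon>0$ because $\mathrm{Hess}(f)_p$ is invertible. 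To rule out closed orbits away from $\mathrm{Crit}(f)$, reparametrize Reeb time by $ds=|c|\,dt$ so that $dx/ds=\pm X_f$; a closed Reeb orbit of period $T<2$ then corresponds to a closed $X_f$-orbit of period at most $\epsilon T<2\epsilon$, which is impossible once $\epsilon$ is less than half the infimum $T^\star$ of periods of closed $X_f$-orbits on $V$. The bound $T^\star>0$ comes from the non-degenerate Morse structure of $f$ in the interior, together with the observation $X_f=f'(r)R_\partial$ on the collar (which inherits the lower bound $5$ on $R_\partial$-periods).

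Setting $\epsilon=0$ in the region $1$ formulas yields $c=0$ and $b=2\pi$, so $R_0=2\pi\,\partial_\theta$ and every circle $\{x\}\times S^1$ lying in region $1$ of $Y_0$ is a Reeb orbit of period $1$. Passing the matching relation $g_\epsilon\bigl(\tfrac{1+\epsilon}{1+\epsilon f(1/2)}\bigr)=\tfrac{1}{2}$ to $\epsilon=0$ gives $g_0(1)=\tfrac{1}{2}$, which identifies the interface of regions $1$ and $2$ of $Y_0$ with the slice $r=\tfrac{1}{2}$; after absorbing the overlap into the cap, the $V$-projection of region $1$ of $Y_0$ is exactly $V\setminus\partial V\times(\tfrac{1}{2},1]$, and region $2$ carries no short orbits by the preceding paragraph. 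The main obstacle I anticipate is the quantitative bound $T^\star>0$ used to exclude non-critical short Reeb orbits in region $1$: this is not purely formal and may require a small perturbation of $f$ respecting the self-indexing structure of Subsection $2.1$, or a direct analysis of the linearized Hamiltonian dynamics near each critical point combined with a pigeon-hole argument on compact regular level sets.
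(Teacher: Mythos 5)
Your argument is correct and follows essentially the same route as the paper, which simply cites the region-by-region Reeb computation (cap region via the bound $A=\frac{1}{g_\epsilon-\rho^2 g'_\epsilon}\le 2$ and the period floor $5$ on $\partial V$, graph region via the explicit Reeb field $cX_f+b\,\partial_\theta$) from \cite[Theorem 6.3, Proposition 6.7]{zhou2019symplectic}; you just reproduce those computations in place. The lower bound $T^\star>0$ you flag as a concern is indeed the only non-formal ingredient, but it follows in the standard way from compactness of $V$, the non-degeneracy of $\mathrm{Hess}(f)$ at critical points, and the collar identity $X_f=f'(r)R_\partial$, so it does not constitute a gap.
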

\begin{proof}
	It follows from the same argument as in \cite[Proposition 6.7]{zhou2019symplectic}, any Reeb orbit touches the region $\left(\widehat{V}\times \D(\sqrt{\frac{1+\epsilon}{1+\epsilon f(\frac{1}{2})}})\right) \cap Y_{\epsilon}$ must be the in form of $(\gamma(At), \rho e ^{iBt+\theta_0})$ for Reeb orbit $\gamma $ on $\partial V$, where 
	$$A=\frac{1}{g_{\epsilon}(\rho^2)-\rho^2g'_{\epsilon}(\rho^2)} \le  2.$$
	In particular, the period of such orbits must be greater than $2$. For the rest of $Y_{\epsilon}$, following the same argument as in \cite[Theorem 6.3]{zhou2019symplectic}, for $\epsilon$ small enough, all period orbits of period smaller than $2$ on $\left(V\times \C \right)\cap Y_{\epsilon}$ must be the simple circle over some critical point $p$ of $f$ with the prescribed period. When $\epsilon=0$, the situation on $Y_0\cap \left(\widehat{V}\times \mathring{\D}\right)$ is the same as before that all Reeb orbits have period greater than $2$.  On the remaining portion, the Reeb vector field is $2\pi \partial_\theta$, where $\theta$ is the angular coordinate on $\C$. Hence the claim follows.
\end{proof}

\begin{remark}
    A more general discussion of the contact properties (Reeb flow, Conley-Zehnder indices) of such hypersurfaces can be found in \cite[\S 6]{zhou2019symplectic}. The specific properties, e.g.\ the self-indexing property of $f$, are for the computations in Proposition \ref{prop:nice}.
\end{remark}

\subsection{Symplectic cohomology}\label{ss:sympcoh}
In the following, we recall models of symplectic cohomology that will be used in this paper. We will follow the autonomous setting in \cite{bourgeois2009symplectic}, see also \cite[\S 2]{zhou2020mathbb} for the exactly same setup and \cite[Remark 2.5]{zhou2020mathbb} for comparisons and relations with other models. Let $W$ be a(n exact) filling of $Y$ and $\cS(\partial W)$ be the set of periods of the Reeb flow on $Y$, which is a discrete set, when the Reeb dynamics are non-degenerate, which we shall assume.  We will consider admissible Hamiltonians  of one of the following two forms.
\begin{enumerate}[(I)]	
	\item\label{I} $H=0$ on $W$, $H=h(r)$ on $\partial W \times (1,\infty)$ and $h'(r)=a \notin \cS(\partial W)$ for $r\ge 1+w$ and $h''(r)>0$ for $1<r<1+w$, here $w$ is called the width of the Hamiltonian.
	\item\label{II} Or $H\le 0$ and is $C^2$ small on $W$ and has the same property on $\partial W \times (1,\infty)$ as in \eqref{I}, such that all periodic orbits of $X_H$ are either non-degenerate critical points of $H$ on $W$ or non-constant orbits in $\partial W \times (1,\infty)$.
\end{enumerate}
In particular, any non-constant orbit of $X_H$ corresponds to some Reeb orbit on $\partial W$ shifted in the $r$-direction. Our symplectic action uese the cohomological convention
\begin{equation}\label{eqn:action}
\cA(\gamma) = -\int_{\gamma} \widehat{\lambda} + \int_{\gamma} H,
\end{equation}
Our convention for $X_H$ is $\widehat{\omega} (\cdot, X_H)=\rd H$.  Here $\widehat{\lambda},\widehat{\omega}$ are the completed Liouville, symplectic forms of the completed Liouville manifold $\widehat{W}$. We choose a time dependent $\widehat{\omega}$ compatible almost complex structure $J$, such that the restriction on $W$ is time independent. Moreover, $J$ is cylindrical convex\footnote{i.e.\ $J$ is $\rd \widehat{\lambda}$ compatible and $\widehat{\lambda} \circ J = \rd r$.} near every $r$, such that $h'(r)$ is the period of a Reeb orbit, which guarantees the validity of the integrated maximum principle \cite{cieliebak2018symplectic}.   Then we pick two different generic points $\hat{\gamma}$ and $\check{\gamma}$ on $\Ima \overline{\gamma}$,  where $\overline{\gamma}$ is the $S^1$-family of the non-constant orbits of $X_H$ corresponding to the Reeb orbit $\gamma$. This is equivalent to choosing a Morse function $g_{\overline{\gamma}}$ with one maximum and one minimum on $\Ima \overline{\gamma}$ in \cite[\S 3]{bourgeois2009symplectic}. By \cite[Lemma 3.4]{bourgeois2009symplectic}, the Morse function $g_{\overline{\gamma}}$ can be used to perturb the Hamiltonian $H$ to get two non-degenerate orbits from $\overline{\gamma}$, which are often denoted by $\hat{\gamma}$ and $\check{\gamma}$ in literatures with $\mu_{CZ}(\hat{\gamma})=\mu_{CZ}(\gamma)+1$ and $\mu_{CZ}(\check{\gamma})=\mu_{CZ}(\gamma)$, where the former $\mu_{CZ}$ is the Conley-Zehnder index of Hamiltonian orbits $\hat{\gamma}$ and $\check{\gamma}$ and the latter $\mu_{CZ}$ is the Conley-Zehnder index of the Reeb orbit $\gamma$. In the case of \eqref{II}, the cochain complex is the free $\Z$-module generated by $\Crit(H)$ and $\check{\gamma}_p, \hat{\gamma}_p$ for $p\in \Crit(f)$. The differential is computed by counting rigid cascades, which can be described pictorially as follows,
\begin{figure}[H]
	\begin{tikzpicture}
	\draw (0,0) to [out=90, in = 180] (0.5, 0.25) to [out=0, in=90] (1,0) to [out=270, in=0] (0.5,-0.25)
	to [out = 180, in=270] (0,0) to (0,-1);
	\draw[dotted] (0,-1) to  [out=90, in = 180] (0.5, -0.75) to [out=0, in=90] (1,-1);
	\draw (1,-1) to [out=270, in=0] (0.5,-1.25) to [out = 180, in=270] (0,-1);
	\draw (1,0) to (1,-1);
	\draw[->] (1,-1) to (1.25,-1);
	\draw (1.25,-1) to (1.5,-1);
	\draw (1.5,-1) to [out=90, in = 180] (2, -0.75) to [out=0, in=90] (2.5,-1) to [out=270, in=0] (2,-1.25)
	to [out = 180, in=270] (1.5,-1) to (1.5,-2);
	\draw[dotted] (1.5,-2) to  [out=90, in = 180] (2, -1.75) to [out=0, in=90] (2.5,-2);
	\draw (2.5,-2) to [out=270, in=0] (2,-2.25) to [out = 180, in=270] (1.5,-2);
	\draw (2.5,-1) to (2.5,-2);
	\draw[->] (2.5,-2) to (2.75,-2);
	\draw (2.75,-2) to (3,-2);
	\draw[->] (-0.5,0) to (-0.25,0);
	\draw (-0.25,0) to (0,0);
	\node at (0.5,-0.5) {$u_1$};
	\node at (2, -1.5) {$u_2$};
	\end{tikzpicture}
	\begin{tikzpicture}
	\draw (0,0) to [out=90, in = 180] (0.5, 0.25) to [out=0, in=90] (1,0) to [out=270, in=0] (0.5,-0.25)
	to [out = 180, in=270] (0,0) to (0,-1);
	\draw[dotted] (0,-1) to  [out=90, in = 180] (0.5, -0.75) to [out=0, in=90] (1,-1);
	\draw (1,-1) to [out=270, in=0] (0.5,-1.25) to [out = 180, in=270] (0,-1);
	\draw (1,0) to (1,-1);
	\draw[->] (1,-1) to (1.25,-1);
	\draw (1.25,-1) to (1.5,-1);
	\draw (1.5,-1) to [out=90, in = 180] (2, -0.75) to [out=0, in=90] (2.5,-1) to [out=270, in=0] (2,-1.25)
	to [out = 180, in=270] (1.5,-1) to (1.5,-2);
	\draw (2.5,-2) to [out=270, in=0] (2,-2.5) to [out = 180, in=270] (1.5,-2);
	\draw (2.5,-1) to (2.5,-2);
	\draw[->] (-0.5,0) to (-0.25,0);
	\draw (-0.25,0) to (0,0);
	\node at (2,-2.5) [circle,fill,inner sep=1.5pt]{};
	\node at (2.3, -3) {$p\in \Crit(H)$};  
	\node at (0.5,-0.5) {$u_1$};
	\node at (2, -1.5) {$u_2$};
	\end{tikzpicture}
	\caption{$d_+$ and $d_{+,0}$ from $2$ level cascades in case \eqref{II}}
\end{figure}
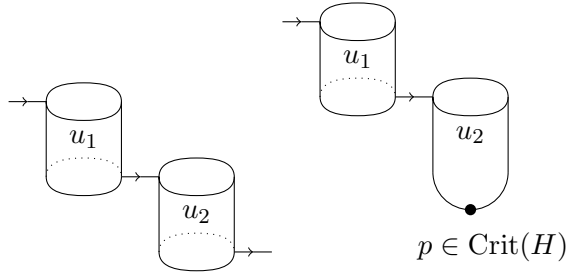
\begin{enumerate}
	\item The horizontal arrow is flowing in $\Ima \overline{\gamma}$ towards $\check{\gamma}$.
	\item $u$ is a solution to the Floer equation $\partial_s u+J_t(\partial_tu-X_H)=0$ modulo the $\R$ translation.
	\item Every intersection point of the line with the surface satisfies the obvious matching condition.
\end{enumerate}

In the case of \eqref{I}, the constant periodic orbits of $X_H$ are parameterized by $W$, which are Morse-Bott non-degenerate except for points on $\partial W$\footnote{How such failure of Morse-Bott non-degeneracy will not be seen by the moduli spaces for symplectic cohomology as explained in \cite[Proposition 2.6]{zhou2019symplectic}.}. To break the Morse-Bott symmetry, we fix a metric and a Morse function $h$ on $W$ such that $\partial_r h>0$ on $\partial W$ . Then the cochain complex is the free $\Z$-module generated by $p\in \Crit(h)$ and $\check{\gamma}_p,\hat{\gamma}_p$. 
The differential is computed by counting rigid cascades, which can be described pictorially as follows, with the only difference of extra gradient flow of $h$.
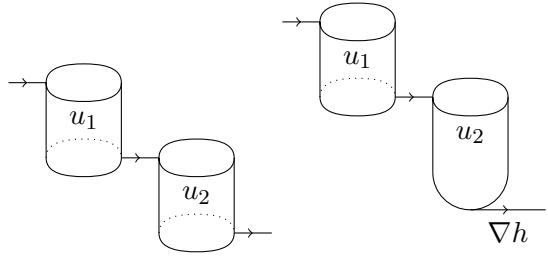
\begin{figure}[H]
	\begin{tikzpicture}
	\draw (0,0) to [out=90, in = 180] (0.5, 0.25) to [out=0, in=90] (1,0) to [out=270, in=0] (0.5,-0.25)
	to [out = 180, in=270] (0,0) to (0,-1);
	\draw[dotted] (0,-1) to  [out=90, in = 180] (0.5, -0.75) to [out=0, in=90] (1,-1);
	\draw (1,-1) to [out=270, in=0] (0.5,-1.25) to [out = 180, in=270] (0,-1);
	\draw (1,0) to (1,-1);
	\draw[->] (1,-1) to (1.25,-1);
	\draw (1.25,-1) to (1.5,-1);
	\draw (1.5,-1) to [out=90, in = 180] (2, -0.75) to [out=0, in=90] (2.5,-1) to [out=270, in=0] (2,-1.25)
	to [out = 180, in=270] (1.5,-1) to (1.5,-2);
	\draw[dotted] (1.5,-2) to  [out=90, in = 180] (2, -1.75) to [out=0, in=90] (2.5,-2);
	\draw (2.5,-2) to [out=270, in=0] (2,-2.25) to [out = 180, in=270] (1.5,-2);
	\draw (2.5,-1) to (2.5,-2);
	\draw[->] (2.5,-2) to (2.75,-2);
	\draw (2.75,-2) to (3,-2);
	\draw[->] (-0.5,0) to (-0.25,0);
	\draw (-0.25,0) to (0,0);
	\node at (0.5,-0.5) {$u_1$};
	\node at (2, -1.5) {$u_2$};
	\end{tikzpicture}
	\begin{tikzpicture}
	\draw (0,0) to [out=90, in = 180] (0.5, 0.25) to [out=0, in=90] (1,0) to [out=270, in=0] (0.5,-0.25)
	to [out = 180, in=270] (0,0) to (0,-1);
	\draw[dotted] (0,-1) to  [out=90, in = 180] (0.5, -0.75) to [out=0, in=90] (1,-1);
	\draw (1,-1) to [out=270, in=0] (0.5,-1.25) to [out = 180, in=270] (0,-1);
	\draw (1,0) to (1,-1);
	\draw[->] (1,-1) to (1.25,-1);
	\draw (1.25,-1) to (1.5,-1);
	\draw (1.5,-1) to [out=90, in = 180] (2, -0.75) to [out=0, in=90] (2.5,-1) to [out=270, in=0] (2,-1.25)
	to [out = 180, in=270] (1.5,-1) to (1.5,-2);
	\draw (2.5,-2) to [out=270, in=0] (2,-2.5) to [out = 180, in=270] (1.5,-2);
	\draw (2.5,-1) to (2.5,-2);
	\draw[->] (-0.5,0) to (-0.25,0);
	\draw (-0.25,0) to (0,0);
	\draw[->] (2,-2.5) to (2.5,-2.5);
	\draw (2.5,-2.5) to (3,-2.5);
	\node at (2.5, -2.8) {$\nabla h$};  
	\node at (0.5,-0.5) {$u_1$};
	\node at (2, -1.5) {$u_2$};
	\end{tikzpicture}
	\caption{$d_+$ and $d_{+,0}$ from $2$ level cascades in case \eqref{I}}
\end{figure}
More formally, the differential is defined by counting the following compactified moduli spaces.
\begin{enumerate}
	\item For $p,q\in \Crit(h)$, 
	$$\cM_{p,q}:=\overline{\{ \gamma:\R_s \to W|\gamma'+\nabla h =0, \lim_{s\to \infty} \gamma=p,\lim_{s\to -\infty} \gamma=q \}/\R}.$$
	\item For $\gamma_+,\gamma_- \in \{\check{\gamma},\hat{\gamma}| \forall S^1 \text{ family of orbits } \overline{\gamma}\}$, a $k$-cascade from $\gamma_+$ to $\gamma_-$ is a tuple $(u_1,l_1,\ldots,l_{k-1},u_k)$, such that
	\begin{enumerate}
		\item $l_i$ are positive real numbers.
		\item nontrivial $u_i\in \{u:\R_s\times S^1_t\to \widehat{W}| \partial_su+J_t(\partial_tu-X_H)=0, \lim_{s\to \infty} u \in \overline{\gamma}_{i-1}, \lim_{s\to -\infty} u \in \overline{\gamma}_{i}\}/\R$ such that $\gamma_+\in \overline{\gamma}_0$ and $\gamma_-\in \overline{\gamma}_{k}$, where the $\R$ action is the translation on $s$.
		\item $\phi_{-\nabla g_{\overline{\gamma}_i}}^{l_i}(\lim_{s\to -\infty}u_i(s,0))=\lim_{s\to \infty}u_{i+1}(s,0)$ for $1\le i\le k-1$, $\gamma_+=\lim_{t\to \infty}\phi^{-t}_{-\nabla g_{\overline{\gamma}_0}}(\lim_{s\to \infty} u_1(s,0))$, and $\gamma_-=\lim_{t\to \infty}\phi^{t}_{-\nabla g_{\overline{\gamma}_k}}(\lim_{s\to -\infty} u_k(s,0))$, where $\phi^t_{-\nabla g_{\overline{\gamma}}}$ is the time $t$ flow of $-\nabla g_{\overline{\gamma}}$ on $\Ima \overline{\gamma}$.
	\end{enumerate}
	Then we define $\cM_{\gamma_+,\gamma_-}$ to be the compactification of the space of all cascades from $\gamma_+$ to $\gamma_-$. The compactification involves the usual Hamiltonian-Floer breaking of $u_i$ as well as degeneration corresponding to $l_i=0,\infty$.  The $l_i=0$ degeneration is equivalent to a Hamiltonian-Floer breaking $\lim_{s\to -\infty}u_i=\lim_{s\to \infty}u_{i+1}$. In particular, they can be glued or paired, hence do not contribute (algebraically) to the boundary of $\cM_{\gamma_+,\gamma_-}$. The $l_i=\infty$ degeneration is equivalent to a Morse breaking for $g_{\overline{\gamma}_i}$, which will contribute to the boundary of $\cM_{\gamma_+,\gamma_-}$.

	\item For $\gamma_+ \in  \{\check{\gamma},\hat{\gamma}| \forall S^1 \text{ family of orbits } \overline{\gamma}\}$ and $q\in \Crit(h)$, a $k$-cascades from $\gamma_+$ to $q$ is a tuple  $(l_0,u_1,l_1,\ldots,u_k,l_{k})$ as before, except
	\begin{enumerate}
		\item $u_k\in \{u:\C \to \widehat{W}|\partial_s u+J_t(\partial_tu-X_H)=0,\lim_{s\to \infty} u \in \overline{\gamma}_{k-1}, u(0)\in \mathring{W}\}/\R$, where we use the identification $\R\times S^1 \to \C^*, (s,t)\mapsto e^{2\pi(s+it)}$. $\mathring{W}$ is the interior of $W$, where the Floer equation is $\overline{\partial}_J u=0$, hence the removal of singularity implies that $u(0)$ is a well-defined notation.
		\item $q=\lim_{t\to \infty}\phi^t_{\nabla h}(u_k(0))$.
	\end{enumerate}
    Then $\cM_{\gamma_+,q}$ is defined to be the compactification of the space of all cascades from $\gamma_+$ to $q$. 
\end{enumerate}
All of the moduli spaces above can be equipped with coherent orientations.

The type \eqref{II} Hamiltonians were used in \cite{bourgeois2009symplectic}, while the a type \eqref{I} Hamiltonian is a hybrid of \cite{bourgeois2009symplectic,zhou2019symplectic} and was used in \cite{zhou2020mathbb}. We use $(C(H),d)$ to denote the total cochain complex in both cases. The cochain complex generated by $p\in \Crit(h)$ or $p\in \Crit(H|_W)$ is a subcomplex $(C_0,d_0)$, which computes the cohomology of $W$. The cochain complex generated by $\check{\gamma},\hat{\gamma}$ for all Reeb orbits $\gamma$ with period smaller than $a$ is quotient complex $(C_+(H),d_+)$. The connecting map $C_+(H)\to C_0$ is denoted by $d_{+,0}$.  In view of the notation above, the differentials are defined as follows.
\begin{eqnarray*}
d_0 (p) &= &\sum_{q,\dim \cM_{p,q}=0} \# \cM_{p,q} q,\\
d_+ (\gamma_+) &= &\sum_{\gamma_-,\dim \cM_{\gamma_+,\gamma_-}=0} \# \cM_{\gamma_+,\gamma_-} \gamma_-,\\
d_{+,0} (\gamma_+) &= &\sum_{q,\dim \cM_{\gamma_+,q}=0} \# \cM_{\gamma_+,q} q,
\end{eqnarray*}
where $\# \cM$ denotes the signed count of the zero-dimensional compact moduli space $\cM$.

Since $a\notin \cS(\partial W)$, a continuation map argument (\cite[Proposition 2.8]{zhou2020mathbb}) implies that the cohomology is independent of the choice of $H$, and the associated cohomology is called the filtered (positive) symplectic cohomology $SH^{\le a}(W)$(and $SH^{\le a}_+(W)$), which can also be defined using the action filtration as in \cite{oancea2006kunneth}. Moreover, we have the following tautological long exact sequence (with $\Z/2$ grading in general by $n-\mu_{CZ}$),
$$\ldots \to H^*(W) \to SH^{*,\le a}(W) \to SH_+^{*,\le a}(W) \to H^{*+1}(W) \to \ldots.$$
For $a<b\notin \cS(Y)$, there is a continuation map $\iota_{a,b}: SH^{*,\le a}(W)\to SH^{*,\le b}(W)$ as well as on the positive symplectic cohomology, which are isomorphisms given that $[a,b]\cap \cS(\partial W)=\emptyset$. The continuation maps are compatible with the long exact sequence above, and the direct limit of the filtered (positive) symplectic cohomology is the (positive) symplectic cohomology $SH^*(W)$(and $SH^*_+(W)$).

\begin{remark}
    Both type \eqref{I} and \eqref{II} Hamiltonians give rise to isomorphic (positive) symplectic cohomology by a continuation map argument \cite[Proposition 2.10]{zhou2019symplectic}. The type \eqref{I} Hamilotnian is better suited for neck-stretching as it is zero along any contact hypersurface in $W$ but a type \eqref{II} Hamiltonian provides an easier setup for the K\"unneth formula below.
\end{remark}

\subsection{The K\"unneth formula}
It was shown by Oancea \cite{oancea2006kunneth} that the K\"unneth formula holds for symplectic cohomology. In particular, we have that $SH^*(V\times \D) = 0$, which is crucial for this paper.  However, in order to obtain the K\"unneth formula for $V\times W$, one does not use those Hamiltonians in \S \ref{ss:sympcoh}. Instead, one uses Hamiltonians in the form of $H\oplus K:=\pi_1^*H+\pi_2^*K$, where $H,K$ are admissible type \eqref{II} Hamiltonians on $V,W$ respectively and $\pi_1,\pi_2$ are two natural projections on $V\times W$. Note that $H\oplus K$ is {\em not} admissible on $V\times W$. When using a splitting almost complex structure $J_1\oplus J_2$ for admissible almost complex structures $J_1, J_2$ on $V,W$ respectively,  we have that $C(H\oplus K)=C(H)\otimes C(K)$. Note that $J_1\oplus J_2$ is also {\em not} admissible. The key step in the proof of the K\"unneth formula is relating the splitting model with the admissible model by a continuation map. Computation is much easier in the splitting model, in particular, we will compute the standard case $V\times \D$ using such splitting model. For this purpose, we first introduce some notations and properties that will be used in this paper.

Let $V,W$ be two Liouville domains with the induced contact forms on the boundary non-degenerate. We fix $a>0 \notin \cS(\partial V), b>0 \notin \cS(\partial W)$ and two admissible Hamiltonians $H,K$ with slope $a$ and $b$ of type \eqref{II} on $V,W$ respectively. We fix generic admissible almost complex structures $J_1,J_2$ on $V, W$ respectively. Then the Hamiltonian-Floer cochain complex $C(H\oplus K)$ of $H \oplus K$ using $J_1\oplus J_2$ is the tensor $C(H)\otimes C(K)$.\footnote{The periodic orbits of $X_{H+K}$ are isolated, in $S^1$ family or $S^1\times S^1$ family. Since the degenerate orbits are Morse-Bott non-degenerate, the cascades construction in \cite{bourgeois2009symplectic} can be adapted to such case.} The subcomplex $C_0(H)\otimes C_0(K)$ is a Morse complex on $V\times W$ and the corresponding quotient complex denoted by $C_+(H\oplus K)$ has a decomposition as $\left(C_0(H)\otimes C_+(K)\right)\oplus \left(C_+(H)\otimes C_+(K)\right) \oplus \left(C_+(H)\otimes C_0(K)\right)$. The cohomology of $C(H\oplus K),C_+(H\oplus K)$ do not depend on the choice of $H,K,J_1,J_2$ as before (\cite[Proposition 2.8]{zhou2020mathbb}), and will be denoted by $SH^{*, \le a, \le b}(V\times W)$ and $SH^{*,\le a, \le b}_+(V\times W)$ respectively. The product version of the continuation map $\iota_{a_1,a_2}:SH^{*,\le a_1}(V)\to SH^{*, \le a_2}(V)$ induces a continuation map
 $\iota_{a_1,a_2;b_1,b_2}:SH^{*,\le a_1, \le b_1}(V\times W) \to SH^{*,\le a_2, \le b_2}(V\times W)$, whenever $a_1\le a_2\notin \partial(V)$ and $b_1\le b_2 \notin \cS(\partial W)$. The main theorem of \cite{oancea2006kunneth} is that 
\begin{equation}\label{eq:lim}
\varinjlim_{a,b}SH^{*,\le a, \le b}(V,W)\simeq SH^*(V\times W).
\end{equation}
Similarly, we have the product version of the pair of pants product using the splitting data. In the filtered case, the product is from $SH^{*,\le a_1,\le b_1}(V\times W) \otimes SH^{*,\le a_2,\le b_2}(V\times W)$ to $SH^{*,\le a_1+a_2,b_1+b_2}(V\times W)$ and is compatible with the product continuation maps. Using the identification in \eqref{eq:lim}, the limit of the product is the usual product structure on $SH^*(V\times W)$. Then the same argument as in \cite[Proposition 2.10]{zhou2020mathbb} yields the following, as $1+A$ is a unit in $H^*(V\times W)$.
\begin{proposition}\label{prop:kill}
	If $1+A\in H^*(V\times W)$  is mapped to zero in $\iota_{0,a,0,b}:H^*(V\times W)\to SH^{*,\le a, \le b}(V\times W)$ for $A\in \oplus_{i>0}H^{2i}(V\times W)$, then $SH^*(V\times W)=0$ and $SH^{*,\le a, \le b}_+(V\times W)\to H^{*+1}(V\times W)$ is surjective.
\end{proposition}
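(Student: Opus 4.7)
The plan is to follow the strategy of \cite[Proposition 2.10]{zhou2020mathbb}, combining nilpotence of positive-degree cohomology elements with the $H^*(V\times W)$-module structure on filtered symplectic cohomology coming from the pair-of-pants product.

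First, I would note that $V\times W$ is a compact manifold with corners, so $H^*(V\times W)$ is a finite-dimensional graded ring. Any element of $\oplus_{i>0}H^{2i}(V\times W)$ is therefore nilpotent by degree reasons, and in particular $1+A$ is a unit in $H^*(V\times W)$ with explicit inverse $B:=\sum_{k\ge 0}(-A)^k$.

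Second, I would invoke the filtered pair-of-pants product recalled in \S\ref{ss:sympcoh}: specializing the product $SH^{\le a_1,\le b_1}\otimes SH^{\le a_2,\le b_2}\to SH^{\le a_1+a_2,\le b_1+b_2}$ to $(a_1,b_1)=(0,0)$ endows $SH^{*,\le a,\le b}(V\times W)$ with an $H^*(V\times W)$-module structure, and the compatibility of the product with continuation maps makes
$$\iota_{0,a,0,b}:H^*(V\times W)\longrightarrow SH^{*,\le a,\le b}(V\times W)$$
a map of $H^*(V\times W)$-modules, i.e.\ $\iota_{0,a,0,b}(\alpha\beta)=\alpha\cdot \iota_{0,a,0,b}(\beta)$ for all $\alpha,\beta\in H^*(V\times W)$. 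I expect this module property to be the main technical obstacle: it requires verifying that the filtered product with one input in filtration $(0,0)$ factors the usual cup product on $H^*(V\times W)$, exactly parallel to the single-factor statement in \cite{zhou2020mathbb} but now in the K\"unneth/splitting setup of the previous subsection. Modulo this input, the rest of the argument is formal.

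Given the module structure, the hypothesis $\iota_{0,a,0,b}(1+A)=0$ and the identity $1=B(1+A)$ in $H^*(V\times W)$ yield
$$\iota_{0,a,0,b}(1)=\iota_{0,a,0,b}\bigl(B(1+A)\bigr)=B\cdot \iota_{0,a,0,b}(1+A)=0,$$
and hence $\iota_{0,a,0,b}(\alpha)=\alpha\cdot \iota_{0,a,0,b}(1)=0$ for every $\alpha\in H^*(V\times W)$, so $\iota_{0,a,0,b}$ is identically zero. Passing to the colimit via \eqref{eq:lim}, the unit $1\in SH^*(V\times W)$ is zero, whence $SH^*(V\times W)=0$. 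Finally, the tautological long exact sequence for filtered (positive) symplectic cohomology then shows the connecting map $SH^{*,\le a,\le b}_+(V\times W)\to H^{*+1}(V\times W)$ has image equal to the kernel of the zero map $H^{*+1}(V\times W)\to SH^{*+1,\le a,\le b}(V\times W)$, so it is surjective.
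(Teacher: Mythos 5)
Your proposal is correct and follows essentially the same approach as the paper, which simply invokes the argument of \cite[Proposition 2.10]{zhou2020mathbb} together with the observation that $1+A$ is a unit in $H^*(V\times W)$: the key points are exactly the $H^*(V\times W)$-module structure on $SH^{*,\le a,\le b}(V\times W)$ induced by the filtered pair-of-pants product (with one factor at filtration level $(0,0)$), its compatibility with continuation maps making $\iota_{0,a,0,b}$ a module map, and the conclusion via the tautological long exact sequence. Your aside correctly isolates the module property as the one nontrivial input, and the nilpotence-based explicit inverse $B=\sum_k(-A)^k$ is the same reason $1+A$ is a unit that the paper has in mind.
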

Finally, when defining $SH^{*,\le a, \le b}(V\times W)$, we only require the almost complex structure splits into $J_1\oplus J_2$ outside a compact set to guarantee a maximal principle. Although the cochain complex will no longer be a tensor product, but is quasi-isomorphic to the tensor product by a standard continuation map. Moreover, it makes sense to define $SH^{*,\le a,\le b}(X), SH^{*,\le a, \le b}_+(X)$ for any other (symplectically aspherical) filling $X$ of $\partial(V\times W)$ as long as we use Hamiltonians that are in the form of $H\oplus K$ outside $\partial(V\times W)$ and is $C^2$ small non-positive inside $X$ of type \eqref{II} or vanishes on $X$ (i.e.\ of type \eqref{I}, which requires another Morse function on $X$ for the construction of the cochain complex), and the analogue of Proposition \ref{prop:kill} holds for $X$.

\subsection{The standard filling $V\times \D$}
Here we consider the situation for the standard filling $V\times \D$. Let $\delta\ll 1$ be a fixed positive number, then there exists an admissible Hamiltonian $K_{1+\delta}$ on $\C=\widehat{\D}$ of type \eqref{II} with slope $1+\delta$, such that there is only one critical point $e$ at $0$ and there is only one $S^1$ family of non-constant periodic orbits $\overline{\gamma}_0$ corresponding to the shortest Reeb orbit on $\partial \D$. The symplectic action $\cA_{K_{1+\delta}}(\overline{\gamma}_0)$ is smaller than $-1$ but can be arranged to be arbitrarily close to $-1$. Then the Hamiltonian-Floer cochain complex is generated by $e,\check{\gamma}_0, \hat{\gamma}_0$ with grading $|e|=0, |\check{\gamma}_0|=-1, |\hat{\gamma}_0|=-2$. The only nontrivial differential is that $d\check{\gamma}_0=e$.

\begin{proposition}\label{prop:product}
	For any sufficiently small $\epsilon>0$, we have $SH_+^{*, \le 1+\delta, \le \epsilon}(V\times \D) \simeq H^*(V)[1]\oplus H^*(V)[2]$, and $SH_+^{*,\le 1+\delta, \le \epsilon}(V\times \D) \to H^{*+1}(V\times \D)$ is given by the projection to the first component.
\end{proposition}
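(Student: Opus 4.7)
The plan is to compute $SH_+^{*,\le 1+\delta,\le \epsilon}(V\times \D)$ by working directly in the splitting model described in the K\"unneth subsection, using the product Hamiltonian $H\oplus K_{1+\delta}$ together with a splitting almost complex structure, and then reading off both the cohomology and the connecting map $d_{+,0}$ from the tensor-product complex.

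First I would choose an admissible type \eqref{II} Hamiltonian $H$ on $V$ of slope $\epsilon$. Since the shortest Reeb orbit on $\partial V$ has period at least $5$, taking $\epsilon$ sufficiently small guarantees that $X_H$ has no non-constant $1$-periodic orbits on $\widehat V$, so that $C(H)=C_0(H)$ is purely a Morse cochain complex computing $H^*(V)$. On the disk factor, the complex $C(K_{1+\delta})=\langle e,\check{\gamma}_0,\hat{\gamma}_0\rangle$ has the only nontrivial differential $d_K\check{\gamma}_0=e$, as recalled just before the proposition.

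Next, by the K\"unneth setup recalled above, cf.\ \cite{oancea2006kunneth}, the filtered cochain complex is the tensor product $C(H)\otimes C(K_{1+\delta})$ with differential $d_H\otimes 1+(-1)^{|\cdot|}\otimes d_K$, and the subcomplex of critical points of $H\oplus K_{1+\delta}$ is exactly $C_0(H)\otimes\langle e\rangle$. Passing to the positive quotient yields
$$C_+(H\oplus K_{1+\delta}) \;=\; C_0(H)\otimes\langle \check{\gamma}_0,\hat{\gamma}_0\rangle,$$
on which the $d_K$-component of the induced differential is killed ($d_K\check{\gamma}_0=e$ lies in the quotiented-out $C_0$ part and $d_K\hat{\gamma}_0=0$), leaving only $d_H\otimes 1$. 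Taking cohomology and using the degrees $|\check{\gamma}_0|=-1$, $|\hat{\gamma}_0|=-2$ gives the claimed isomorphism $SH_+^{*,\le 1+\delta,\le\epsilon}(V\times \D)\cong H^*(V)[1]\oplus H^*(V)[2]$.

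Finally, the map to $H^{*+1}(V\times \D)$ is the connecting map $d_{+,0}$ of the tautological long exact sequence, i.e.\ the component of the total differential landing in $C_0(H)\otimes\langle e\rangle$. By the same splitting formula one has $d_{+,0}(\alpha\otimes \check{\gamma}_0)=\pm\alpha\otimes e$ and $d_{+,0}(\alpha\otimes \hat{\gamma}_0)=0$, and under the K\"unneth identification $H^*(V\times \D)\cong H^*(V)\otimes H^*(\D)\cong H^*(V)$ sending $\alpha\otimes e\mapsto\alpha$, this is exactly the projection onto the first summand $H^*(V)[1]$. The main non-routine point is to verify that the (non-admissible) splitting model really computes the abstract filtered positive symplectic cohomology with its correct action filtration; for this I would invoke the continuation-map argument of \cite{oancea2006kunneth} relating the splitting and admissible models, after which the remaining work is bookkeeping of signs and the Morse--Bott cascades that define $\check{\gamma}_0,\hat{\gamma}_0$.
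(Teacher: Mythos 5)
Your proof is correct and follows essentially the same route as the paper: both compute $SH_+^{*,\le 1+\delta,\le\epsilon}(V\times\D)$ via the splitting model with the tensor-product Hamiltonian (the paper uses the specific $K_{1+\delta}\oplus\epsilon(f-1)$, but since $C_+(H)=0$ for any admissible $H$ of slope $\epsilon$, your general $H$ works just as well), reading off $C_+(H\oplus K_{1+\delta})=C_0(H)\otimes\langle\check\gamma_0,\hat\gamma_0\rangle$ with differential $d_H\otimes 1$ and identifying the connecting map with the projection onto the check component. The one minor over-cautiousness is that the continuation between splitting and admissible models is not actually needed here, since $SH_+^{*,\le a,\le b}$ is by definition computed in the splitting model (the comparison to a single-slope admissible Hamiltonian is deferred to Proposition~\ref{prop:nice}).
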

\begin{proof}
	For a sufficient small $\epsilon$, $\epsilon(f-1)$ can be completed to an admissible Hamiltonian of type \eqref{II} on $V$ with slope $\epsilon$, where $f$ is the Morse function on $V$ used in \S \ref{ss:contact}. The Hamiltonian-Floer cochain complex of $\epsilon(f-1)$ is just the Morse complex of $f$. Since there is no Reeb orbit on $\partial V$ of period smaller than $\epsilon$, the Morse cohomology computes $SH^{*,\le \epsilon}(V)$. Then we can use $K_{1+\delta}\oplus \epsilon(f-1)$ to compute  $SH^{*,\le 1+\delta, \le \epsilon}(V\times \D)$. The differentials on the tensor product are given by 
	$$
	\begin{array}{rclrclrcl}
	\la d p\otimes \check{\gamma}_0,q \otimes \check{\gamma}_0 \ra &=& \la d_0 p, q\ra, &\la d p\otimes \check{\gamma}_0, q \otimes \hat{\gamma}_0 \ra &=& 0, &\la d p\otimes \check{\gamma}_0,q\otimes e \ra &=& \delta_{p,q}, \\
	\la d p\otimes\hat{\gamma}_0,q\otimes\check{\gamma}_0 \ra &=& 0, &\la d p\otimes\hat{\gamma}_0,q\otimes\hat{\gamma}_0 \ra & = &\la d_0 p, q\ra,& \la d p\otimes\hat{\gamma}_0,q\otimes e \ra & =& 0, \\
	\la d p\otimes e, q \otimes \check{\gamma}_0\ra &=&0,& \la d p \otimes e, q\otimes \hat{\gamma}_0\ra &=&0,& \la d p\otimes e, q\otimes e \ra &=&\la d_0 p, q\ra
	\end{array}
	$$
	where $d_0$ is the Morse differential of $f$ on $V$. This verifies the proposition. Moreover, the $H(V)[1]$ component is generated by check orbits and the $H(V)[2]$ component is generated by hat orbits.
\end{proof}
Moreover, the filtered positive symplectic cohomology $SH^{*,\le 1+\delta,\le \epsilon}_+(W)$ does not depend on the filling $W$ of $Y$. As the related cochain complex degenerate to two copies of Morse cochain complexes of $V$ for any filling when we push $\epsilon \to 0$ to a ``Morse-Bott" case\footnote{The Reeb dynamics on $Y_0$ are only Morse-Bott non-degenerate along $(V\backslash \partial V \times [\frac{1}{2},1])\times S^1$,
but not Morse-Bott non-degenerate along $\partial V \times \{\frac{1}{2}\} \times S^1$.} where all Reeb orbits have the same period. We will not prove this degeneration, but use a neck-stretching argument to prove this fact. 

\subsection{Neck-stretching}\label{ss:NS}
We first recall some basics of the neck-stretching procedure in \cite{bourgeois2003compactness}. We also recommend \cite[\S 2.3, 9.5]{cieliebak2018symplectic} for applications of neck-stretching in Floer theories.

We recall the setup of neck-stretching for general case following \cite[\S 3.2]{zhou2019symplectic}.  Let $(W,\lambda)$ be an exact domain and $(Y,\alpha:=\lambda|_{Y})$ be a contact type hypersurface inside $W$.\footnote{The process works for strong filling $W$ as long as $Y$ is contact hypersurface.} The hypersurface divides $W$ into a cobordism $X$ union with a domain $W'$. Then we can find a small slice $(Y\times [1-\eta,1+\eta]_r,\rd(r\alpha))$ symplectomorphic to a neighborhood of $Y$ in $W$. Assume $J|_{Y\times [1-\eta,1+\eta]_{r}}=J_0$, where $J_0$ is independent of $S^1$ and $r$ and $J_0(r\partial_{r})=R_\alpha,J_0\xi=\xi$ for $\xi:=\ker \alpha$. Then we pick a family of diffeomorphism $\phi_R:[(1-\eta)e^{1-\frac{1}{R}}, (1+\eta)e^{\frac{1}{R}-1}]\to [1-\eta,1+\eta]$ for $R\in (0,1]$ such that $\phi_1=\Id$ and $\phi_R$ near the boundary is linear with slope $1$. Then the stretched almost complex structure $NS_{R}(J)$ is defined to be $J$ outside $Y\times [1-\eta,1+\eta]$ and is $(\phi_R\times \Id)_*J_0$ on $Y_1\times [1-\eta,1+\eta]$. Then $NS_{1}(J)=J$ and $NS_{0}(J)$ gives almost complex structures on the completions $\widehat{X}$, $\widehat{W'}$ and $Y\times \R_+$, which we will refer as the fully stretched almost complex structure.

We will consider the degeneration of curves solving the Floer equation with one positive cylindrical end asymptotic to a non-constant Hamiltonian orbit of $X_H$. Here we require that $H=0$ near the contact hypersurface $Y$. Since either the orbit is simple or $J$ depends on the $S^1$ coordinate near non-simple orbits, the topmost curve in the SFT building, i.e.\ the curve in $\widehat{X}$, has the somewhere injectivity property. In particular, we can find regular $J$ on $\widehat{X}$ such that all relevant moduli spaces, i.e.\ those with point constraint from $\widehat{X}$ (used in \S \ref{s3}), or with negative cylindrical ends asymptotic to non-constant Hamiltonian orbits of $X_H$, possibly with negative punctures asymptotic to Reeb orbits of $Y$ and multiple cascades levels, are cut out transversely. We say an almost complex structure on $W$ is generic iff the fully stretched almost complex structure $NS_0(J)$ is regular on $\widehat{X}$. The set of generic almost complex structures form an open dense subset\footnote{This is because there are only finitely many moduli spaces that can have positive energy.} in the set of compatible almost complex structures  that are cylindrical convex and $S^1$, $r$ independent on $Y\times [1-\eta,1+\eta]_r$. 

For the compactification of curves in the topmost SFT level, in addition to the usual SFT building in the symplectization $Y\times \R_+$ stacked from below \cite{bourgeois2003compactness}, we also need to include Hamiltonian-Floer breakings near the cylindrical ends. In our context, since we use autonomous Hamiltonians and cascades,  we need to include curves with multiple cascades levels and their degeneration, e.g.\ $l_i=0,\infty$ in the cascades for some horizontal level $i$. A generic configuration is described in the top-right of the figure below, but we could also have more cascades levels with the connecting Morse trajectories degenerate to $0$ length or broken Morse trajectories.
\begin{figure}[H]
	\begin{center}
		\begin{tikzpicture}[scale=0.5]
		\path [fill=blue!15] (0,0) to [out=20, in=160]  (6,0) to [out=270,in=90] (6,-6) to [out=170,in=10] (0,-6) to [out=90, in=270] (0,0);
		\path [fill=red!15] (0,-6) to [out=10, in=170]  (6,-6) to [out=270,in=0] (3,-10) to [out=180,in=270] (0,-6); 
		\draw (0,0) to [out=20, in=160]  (6,0) to [out=270,in=90] (6,-6) to [out=170,in=10] (0,-6) to [out=90, in=270] (0,0);
		\draw (6,-6) to [out=270,in=0] (3,-10) to [out=180,in=270] (0,-6); 
		\draw[dashed] (0, -5.5) to [out=10, in=170] (6,-5.5);
		\draw[dashed] (0.02,-6.5) to [out=10, in=170] (5.98,-6.5);
		\draw[->] (1,-1) to (1.5,-1);
		\draw (1.5,-1) to (2,-1);
		\draw (2,-1) to [out=90, in=180] (2.5, -0.75) to [out=0, in = 90] (3,-1) to [out=270,in=0] (2.5,-1.25) to [out=180,in=270] (2,-1);
		\draw (2,-1) to [out=270,in=90] (1,-7) to [out=270,in=180] (2,-8) to [out=0,in=180](2.5,-3) to [out=0, in=90](3,-4);
		\draw (3,-1) to [out=270, in=90] (4,-4) to [out=270, in=0] (3.5,-4.25) to [out=180,in=270] (3,-4);
		\draw[dotted] (3,-4) to [out=90, in=180] (3.5,-3.75) to [out=0, in=90] (4,-4);
		\draw[->] (4,-4) to (4.25,-4);
		\draw (4.25,-4) to (4.5,-4);
		\draw (4.5,-4) to [out=90, in=180] (5,-3.75)  to [out=0, in=90] (5.5,-4) to [out=270,in=0] (5,-4.25) to [out=180,in=270] (4.5,-4);
		\draw (4.5,-4) to (4.5,-5);
		\draw (5.5,-4) to (5.5,-5);
		\draw[dotted]  (4.5,-5) to [out=90, in=180] (5,-4.75)  to [out=0, in=90] (5.5,-5);
		\draw (5.5,-5) to [out=270,in=0] (5,-5.25) to [out=180,in=270] (4.5,-5);
		\draw[->] (5.5,-5) to (5.75,-5);
		\draw (5.75,-5) to (5.8,-5);
		\node at (2.5,-2) {$u_1$};
		\node at (5,-4.6) {$u_2$};
		\end{tikzpicture}
		\hspace{1cm}
		\begin{tikzpicture}[xscale=0.5,yscale=0.7]
		\path [fill=blue!15] (0,-1) to [out=20, in=160]  (6,-1) to [out=270,in=90] (6,-6) to [out=170,in=10] (0,-6) to [out=90, in=270] (0,-1);
		\path [fill=red!15] (0,-6) to [out=10, in=170]  (6,-6) to [out=270,in=0] (3,-10) to [out=180,in=270] (0,-6); 
		\draw (0,-1) to [out=20, in=160]  (6,-1) to [out=270,in=90] (6,-6) to [out=170,in=10] (0,-6) to [out=90, in=270] (0,-1);
		\draw (6,-6) to [out=270,in=0] (3,-10) to [out=180,in=270] (0,-6);
		\draw[dashed] (0, -5.5) to [out=10, in=170] (6,-5.5);
		\draw[dashed] (0.1, -6.5) to [out=10, in=170] (5.9,-6.5);
		\draw[->] (1,-2) to (1.5,-2);
		\draw (1.5,-2) to (2,-2);
		\draw (2,-2) to [out=90, in=180] (2.5, -1.8) to [out=0, in = 90] (3,-2) to [out=270,in=0] (2.5,-2.2) to [out=180,in=270] (2,-2);
		\draw (2,-2) to [out=270,in=90] (1,-7) to [out=270,in=180] (2,-8) to [out=0,in=180](2.5,-3.5) to [out=0, in=90](3,-4);
		\draw (3,-2) to [out=270, in=90] (4,-4) to [out=270, in=0] (3.5,-4.2) to [out=180,in=270] (3,-4);
		\draw[dotted] (3,-4) to [out=90, in=180] (3.5,-3.8) to [out=0, in=90] (4,-4);
		\draw[->] (4,-4) to (4.25,-4);
		\draw (4.25,-4) to (4.5,-4);
		\draw (4.5,-4) to [out=90, in=180] (5,-3.75)  to [out=0, in=90] (5.5,-4)  to [out=270,in=0] (5,-4.25) to [out=180,in=270] (4.5,-4);
        \draw (4.5,-4) to (4.5,-5);
		\draw (5.5,-4) to (5.5,-5);
		\draw[dotted]  (4.5,-5) to [out=90, in=180] (5,-4.75)  to [out=0, in=90] (5.5,-5);
		\draw (5.5,-5) to [out=270,in=0] (5,-5.25) to [out=180,in=270] (4.5,-5);
		\draw[->] (5.5,-5) to (5.75,-5);
		\draw (5.75,-5) to (5.8,-5);
		\node at (2.5,-3) {$u_1$};
		\node at (5,-4.5) {$u_2$};
		\end{tikzpicture}
		\hspace{1cm}
		\begin{tikzpicture}[scale=0.5]
		\path [fill=blue!15] (0.5,-6) to [out=90, in=270]  (0,0) to [out=20,in=160] (6,0) to [out=270,in=90] (5.5,-6) to [out=160, in=20] (0.5,-6);
		\path [fill=purple!15] (0.5,-6.2) to [out=20,in=160] (5.5,-6.2) to [out=270,in=90] (5.5,-12) to [out=160, in=20] (0.5,-12) to [out=90, in=270] (0.5,-6.2);
		\path [fill=red!15] (0.5, -12.2) to [out=20,in=160] (5.5,-12.2) to [out=270,in=0] (3,-16) to [out=180,in=270] (0.5,-12.2);
		\draw (0.5,-6) to [out=90, in=270]  (0,0) to [out=20,in=160] (6,0) to [out=270,in=90] (5.5,-6);
		\draw [dashed] (5.5,-6) to [out=160, in=20] (0.5,-6);
		\draw[dashed] (0.5,-6.2) to [out=20,in=160] (5.5,-6.2);
		\draw (5.5,-6.2) to [out=270,in=90] (5.5,-12);
		\draw[dashed] (5.5,-12) to [out=160, in=20] (0.5,-12);
		\draw (0.5,-12) to [out=90, in=270] (0.5,-6.2);
		\draw [dashed](0.5, -12.2) to [out=20,in=160] (5.5,-12.2); 
		\draw (5.5,-12.2)to [out=270,in=0] (3,-16) to [out=180,in=270] (0.5,-12.2);
		\draw[->] (1,-1) to (1.5,-1);
		\draw (1.5,-1) to (2,-1);
		\draw (2,-1) to [out=90, in=180] (2.5, -0.75) to [out=0, in = 90] (3,-1) to [out=270,in=0] (2.5,-1.25) to [out=180,in=270] (2,-1);
		\draw (2,-1) to [out=270,in=90] (1,-4) to [out=270, in=180]  (1.5,-5.8) to [out=0,in=270](2,-5) to [out=90,in=180] (2.5,-3) to [out=0, in=90](3,-4);
		\draw (3,-1) to [out=270, in=90] (4,-4) to [out=270, in=0] (3.5,-4.25) to [out=180,in=270] (3,-4);
		\draw[dotted] (3,-4) to [out=90, in=180] (3.5,-3.75) to [out=0, in=90] (4,-4);
		\draw[->] (4,-4) to (4.25,-4);
		\draw (4.25,-4) to (4.5,-4);
		\draw (4.5,-4) to [out=90, in=180] (4.9,-3.75)  to [out=0, in=90] (5.3,-4) to [out=270,in=0] (4.9,-4.25) to [out=1800,in=270] (4.5,-4);
		\draw (4.5,-4) to (4.5,-5);
		\draw (5.3,-4) to (5.3,-5);
		\draw[dotted] (4.5,-5) to [out=90, in=180] (4.9,-4.75)  to [out=0, in=90] (5.3,-5);
		\draw (5.3,-5) to [out=270,in=0] (4.9,-5.25) to [out=1800,in=270] (4.5,-5);
        \draw[->] (5.3,-5) to (5.5,-5);
		\draw (1.5,-5.9) to [out=180,in=90] (0.8,-9) to [out=270, in=180] (1.5,-11.8) to [out=0,in=270] (2.2,-9) to [out=90, in=0] (1.5,-5.9);
		\draw (1.5,-11.9) to [out=180,in=90] (0.8,-13) to [out=270, in=180] (1.5,-14) to [out=0,in=270] (2.2,-13) to [out=90, in=0] (1.5,-11.9);

		\node at (1.5,-5.8) [circle, fill=white, draw, outer sep=0pt, inner sep=3 pt] {};

		\node at (1.5,-11.8) [circle, fill=white, draw, outer sep=0pt, inner sep=3 pt] {};
		\node at (2.5,-2) {$u^{\infty}_1$};
		\node at (5,-4.7) {$u^{\infty}_2$};
		\node at (4.5,-2) {$\widehat{X}$};
		\node at (4,-10) {$Y\times \R_+$};
		\node at (4,-14) {$\widehat{W'}$};
		\end{tikzpicture}
	\end{center}
    \caption{Neck-stretching}
    \label{fig}
\end{figure}
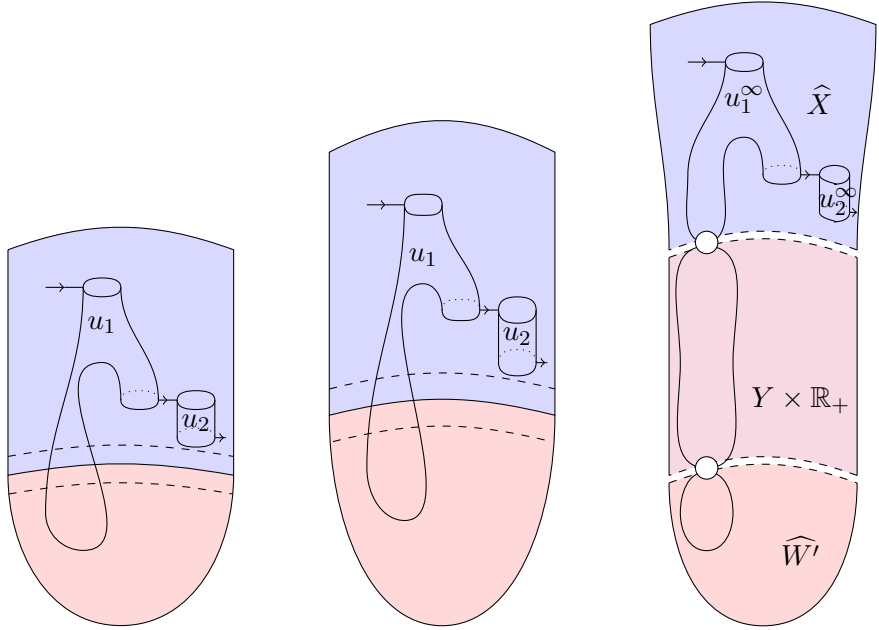
In the figure, we use $\bigcirc$ to indicate the puncture that is asymptotic to a Reeb orbit. The neck-stretching procedure allows us to understand the effect of fillings on Floer cohomology.

A useful fact from the non-negativity of energy is the following action constraint. Let $u$ be a Floer cylinder in $\widehat{X}$ with negative punctures asymptotic to a multiset $\Gamma$ of Reeb orbits (i.e.\ a set of Reeb orbits with possible duplications). Assume $\displaystyle\lim_{s\to \infty} u = x$ and $\displaystyle\lim_{s\to -\infty} u = y$,  then we have
\begin{equation}\label{eqn:positive}
\cA_H(y)-\cA_H(x)-\sum_{\gamma\in \Gamma}\int \gamma^*\lambda\ge 0
\end{equation}
In \S \ref{s3}, we need to consider Floer cylinders in $\widehat{X}$ such that  $\displaystyle\lim_{s\to -\infty} u$ converges to a point (where $H=0$), then we have 
\begin{equation}\label{eqn:positive2}
-\cA_H(x)-\sum_{\gamma\in \Gamma}\int \gamma^*\lambda\ge 0
\end{equation}

\begin{proposition}\label{prop:positive}
	Let $W$ be a symplectically aspherical filling of $\partial (V \times \D)$, then  $SH_+^{*,\le 1+\delta, \le \epsilon}(W) \simeq H^*(V)[1]\oplus H^*(V)[2]$ for any sufficiently small $\epsilon$. 
\end{proposition}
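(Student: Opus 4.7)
The plan is to neck-stretch along $Y$ inside a gluing of $W$ with the outside region of the standard filling, so that the $d_+$ differential on $C_+$ for $W$ reduces to the one computed for $V\times\D$ in Proposition \ref{prop:product}. Following the last paragraph of the K\"unneth formula subsection, I would realize $SH_+^{*,\le 1+\delta,\le\epsilon}(W)$ by gluing $W$ to $X_{out}:=(\widehat V\times\C)\setminus(V\times\D)$ along $Y$ to form a symplectic manifold $\widetilde W$, putting the split Hamiltonian $K_{1+\delta}\oplus\epsilon(f-1)$ on $X_{out}$ and a $C^2$-small non-positive Morse function on $W$, and choosing the split almost complex structure $J_1\oplus J_2$ on $X_{out}$ together with a generic cylindrical almost complex structure on $W$ as in \S\ref{ss:NS}. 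All non-constant Hamiltonian orbits live in the outside piece, so the quotient complex $C_+$ is generated by the same pairs $p\otimes\check\gamma_0,\,p\otimes\hat\gamma_0$ ($p\in\Crit(f)$) as in the standard case, independently of $W$; only $d_+$ could potentially change.

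Next I would neck-stretch the almost complex structure along $Y\subset\widetilde W$. For each rigid Floer cylinder $u$ contributing to $d_+$, with positive asymptote $x$ and negative asymptote $y$, Gromov--Hofer--SFT compactness (cf.\ Figure \ref{fig}) produces a limit building whose top piece $u^\infty_1$ lies in $\widehat{X_{out}}$ with asymptotes $x,y$ and a multiset $\Gamma$ of negative Reeb punctures on $Y$. Applying the action estimate \eqref{eqn:positive} to the top piece gives
\[\sum_{\gamma\in\Gamma}\int\gamma^*\lambda\le\cA_H(y)-\cA_H(x).\]
By construction the actions of all generators of $C_+$ lie in a common window of width at most $\epsilon$: each equals $\cA_{K_{1+\delta}}(\overline\gamma_0)+\epsilon(f(p)-1)$ with $\cA_{K_{1+\delta}}(\overline\gamma_0)$ common and $\epsilon(f(p)-1)\in[-\epsilon,0]$. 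On the other hand, by the classification of short Reeb orbits in \S\ref{ss:contact} any Reeb orbit on $Y$ has period at least $\frac{1+\epsilon}{1+\epsilon}=1$. Hence for $\epsilon$ small the inequality above forces $\Gamma=\emptyset$, and then the absence of negative Reeb punctures on $u^\infty_1$ forbids any lower SFT level.

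With the stretched limit reduced to a cascaded Floer trajectory living entirely in $\widehat{X_{out}}$ with the same split data used in the proof of Proposition \ref{prop:product}, the signed count giving $d_+$ for $W$ agrees term-by-term with the signed count giving $d_+$ for $V\times\D$. Consequently $(C_+,d_+)$ for $W$ computes $H^*(V)[1]\oplus H^*(V)[2]$, as claimed.

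The main obstacle will be quantifying the action bookkeeping precisely, so that the spread of generator actions in $C_+$ is strictly less than the minimum Reeb period on $Y$; this requires careful choices of the profile of $K_{1+\delta}$ and of the Morse perturbation of $f$, but the freedom in $\delta,\epsilon$ makes it possible. A secondary technical point is the transversality of the top piece in $\widehat{X_{out}}$ under the split almost complex structure; the genericity condition on the almost complex structure on $W$ introduced in \S\ref{ss:NS} is designed to supply this via somewhere-injectivity of Floer cylinders with simple asymptotics.
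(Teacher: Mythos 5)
Your overall strategy --- neck-stretch and kill Reeb punctures via the action constraint combined with the minimum Reeb period --- is the same as the paper's, but there are two concrete issues in the implementation.

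First, you propose to stretch along $Y=\partial W$ itself. However, the Hamiltonian you place on $X_{out}$ is $K_{1+\delta}\oplus\epsilon(f-1)$, which is nonzero on $Y$ (e.g.\ $\epsilon(f(v)-1)<0$ at points $(v,z)\in V\times\partial\D$ with $v$ in the interior of $V$), and the $C^2$-small Morse function you put on $W$ is also nonzero. But the neck-stretching set-up in \S\ref{ss:NS} explicitly requires $H=0$ in a neighborhood of the stretching hypersurface: this is what makes the inserted long neck solve the honest $\bar\partial_J$-equation and lets SFT compactness produce asymptotics to Reeb orbits (and makes the energy identity \eqref{eqn:positive} applicable in the form you use it). With $H\neq 0$ near $Y$, the limit building would not consist of punctured holomorphic curves, so the argument does not go through as stated. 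The paper sidesteps this by choosing the degenerate hypersurface $Y_0$, pushed (after rescaling the Liouville form) into the exact collar of $\partial W$, and by using a type \eqref{I} Hamiltonian that vanishes identically on $W$; then $H\equiv 0$ in a two-sided neighborhood of $Y_0$, and all Reeb orbits on $Y_0$ still have period at least $1$.

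Second, your last paragraph asserts that, after stretching, the cascade lives in $\widehat{X_{out}}$ "with the same split data used in the proof of Proposition \ref{prop:product}" and concludes a term-by-term agreement of $d_+$ with the split computation. This is not correct: the fully stretched almost complex structure on $\widehat{X_{out}}$ is cylindrical near the negative end and is not the split $J_1\oplus J_2$ (the paper flags this exact point in Proposition \ref{prop:checkhat}: "it is important to note that a stretched almost complex structure does not split"). What the stretching argument actually gives is that the stretched chain complex for $W$ agrees with the stretched chain complex for $V\times\D$; to identify the latter's cohomology with $H^*(V)[1]\oplus H^*(V)[2]$ one must separately invoke a continuation map from the split model of Proposition \ref{prop:product} to the stretched one (i.e.\ invariance of $SH^{*,\le 1+\delta,\le\epsilon}_+$ under change of admissible $J$), a step your write-up elides.
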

\begin{proof}
The action difference between any two generators in $SH_+^{\le 1+\delta, \le \epsilon}(W)$ is very small for sufficient small $\epsilon$ when we use Hamiltonian $K_{1+\delta}\oplus \epsilon(f-1)$ outside $\partial{W}=\partial (V\times \D)$. W.O.L.G., i.e.\ up to rescaling, we can assume $Y_0$ is contained in the exact neighborhood (i.e.\ where the Liouville vector field exists) of $\partial W$. We can apply neck-stretching along $Y_0$. Since all Reeb orbits have period at least $1$, there is no breaking for a fully stretched almost complex structure by  the action constraint \eqref{eqn:positive}. Therefore all relevant moduli spaces are contained outside $Y_0$ for a  sufficiently stretched almost complex structure, i.e.\  $SH_+^{*,\le 1+\delta, \le \epsilon}(W)$ is independent of the filling.
\end{proof}

In Proposition \ref{prop:product}, the splitting $SH^{*,\le 1+\delta,\le \epsilon}_+(V\times \D)=H^*(V)[1]\oplus H^*(V)[2]$ is given by check and hat orbits. Next, we explain that we have the same splitting for any filling. Since in our situation, only simple Reeb orbits are considered. Therefore we can require our almost complex structure to be time-independent and still have all the transversality properties \cite[Proposition 3.5]{bourgeois2009symplectic}. Then the moduli spaces of Floer cylinders considered in the positive cochain complex will have a free $S^1$ action. Therefore, there is no rigid cascade from a hat orbit to a check orbit because of the free $S^1$ action on Floer cylinders, as rotating any Floer cylinder a bit is still a cascade from the hat orbit to the check orbit. Let $\check{C}_+$ and $\hat{C}_+$ denote the complexes generated by check orbits and hat orbits respectively, then there is a short exact sequence of complexes $0\to \hat{C}_+\to C_+\to \check{C}_+\to 0$. The $S^1$ equivariant transversality argument holds for continuation maps when using an $S^1$-independent almost complex structure. Hence the continuation map induces an morphism between the short exact sequences. We define $\check{SH}^{*,\le 1+\delta,\le \epsilon} := H^*(\check{C}_+)$ and  $\hat{SH}^{*,\le 1+\delta,\le \epsilon} := H^*(\hat{C}_+)$. Therefore we have the following.
\begin{proposition}\label{prop:checkhat}
		Let $W$ be a symplectically aspherical filling of $\partial (V \times \D)$. For any sufficiently small $\epsilon$, we have a short exact sequence $0 \to \hat{SH}_+^{*,\le 1+\delta, \le \epsilon}(W) \to SH_+^{*,\le 1+\delta, \le \epsilon}(W) \to \check{SH}_+^{*,\le 1+\delta, \le \epsilon}(W) \to 0$, which is isomorphic to $0\to H^*(V)[2]\to H^*(V)[1]\oplus H^*(V)[2] \to H^*(V)[1]\to 0$. Moreover, the connecting map $ SH_+^{*,\le 1+\delta, \le \epsilon}(W)\to H^{*+1}(W)$ factors through $ SH_+^{*,\le 1+\delta, \le \epsilon}(W) \to \check{SH}_+^{*,\le 1+\delta, \le \epsilon}(W)$.
\end{proposition}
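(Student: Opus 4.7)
The plan is to exploit the free $S^1$-action on moduli spaces of non-constant Floer cylinders coming from a time-independent almost complex structure $J$, together with the neck-stretching argument along $Y_0$ used in Proposition \ref{prop:positive}. Transversality is preserved by \cite[Proposition 3.5]{bourgeois2009symplectic} since only simple Reeb orbits appear among the relevant Hamiltonian orbits, and the rotation $(s,t)\mapsto (s,t+\tau)$ defines a free $S^1$-action on any moduli space of non-constant Floer solutions asymptotic to such orbits.

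First I would show that $\hat{C}_+$ is a subcomplex of $C_+$ by ruling out rigid cascades from a hat orbit $\hat{\gamma}_+$ to a check orbit $\check{\gamma}_-$. Given such a cascade $(u_1,l_1,\ldots,u_k)$, rotate every $u_i$ simultaneously by the same $\tau$. The top and bottom Morse conditions, namely that the asymptotic evaluation lies in the unstable, respectively stable, manifold of the specified critical point of $g$, are open and hence preserved for small $\tau$, while each intermediate matching $\phi^{l_i}_{-\nabla g_i}(x_i)=y_{i+1}$ remains valid after a smooth $\tau$-dependent adjustment of $l_i$, since both endpoints rotate by the same amount and the Morse flow of $g$ is transverse to rotation away from $\hat{\gamma},\check{\gamma}$. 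This yields a free $S^1$-action on $\cM_{\hat{\gamma}_+,\check{\gamma}_-}$ whenever it is non-empty, forcing its dimension to be at least $1$. Hence $d_+(\hat{C}_+)\subset \hat{C}_+$, producing the short exact sequence $0\to \hat{C}_+\to C_+\to \check{C}_+\to 0$ of cochain complexes.

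Next I would identify this on cohomology with the claimed split SES. For the standard filling $V\times \D$ using the splitting Hamiltonian $K_{1+\delta}\oplus\epsilon(f-1)$, the explicit differentials in Proposition \ref{prop:product} restricted to $C_+$ preserve the check and hat subspaces separately (there is no cross-term between $p\otimes\check{\gamma}_0$ and $q\otimes\hat{\gamma}_0$), so the SES of complexes splits as a direct sum and computes $H^*(V)[2]\oplus H^*(V)[1]$. For a general symplectically aspherical filling $W$, apply neck-stretching along $Y_0$ as in Proposition \ref{prop:positive}: for a sufficiently stretched $J$ the action bound \eqref{eqn:positive} forces every Floer cylinder contributing to $d_+$ on $C_+(K_{1+\delta}\oplus\epsilon(f-1))$ to remain outside $Y_0$, so the pair $(\hat{C}_+,\check{C}_+)$ for $W$ is chain-level identified with that for $V\times\D$ and the split SES carries over.

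Finally, I would show the connecting map $SH_+^{*,\le 1+\delta,\le\epsilon}(W)\to H^{*+1}(W)$, which on the cochain level is $d_{+,0}$, vanishes on $\hat{C}_+$. For a hat orbit $\hat{\gamma}_+$, the capping plane $u_k:\C\to\widehat{W}$ in a cascade in $\cM_{\hat{\gamma}_+,q}$ satisfies $u_k(0)\in\mathring{W}$, which is fixed by the disk rotation $u_k(\zeta)\mapsto u_k(e^{i\tau}\zeta)$; the Morse condition $q=\lim_{t\to\infty}\phi^t_{\nabla h}(u_k(0))$ is therefore preserved. Combining with the analogous rotation on the preceding cylinders and the openness of the top matching produces a free $S^1$-action on $\cM_{\hat{\gamma}_+,q}$, ruling out rigid elements. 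Hence $d_{+,0}$ vanishes on $\hat{C}_+$ and factors through $\check{C}_+=C_+/\hat{C}_+$ as required. The main obstacle will be verifying that the $S^1$-action on multi-level cascades is genuinely well-defined and free; the subtle point is the smooth $\tau$-dependent reparameterization of the interior matching parameters $l_i$, which one justifies via the implicit function theorem using transversality of $\nabla g$ to rotation away from its critical points.
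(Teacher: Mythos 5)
Your overall strategy matches the paper's: use the free $S^1$-action coming from a time-independent $J$ (valid by \cite[Proposition 3.5]{bourgeois2009symplectic} since only simple orbits appear) to kill rigid hat-to-check and hat-to-constant cascades, then use neck-stretching along $Y_0$ to transfer the computation from $V\times\D$ to a general filling. The $S^1$-rotation argument on cascades and the vanishing of $d_{+,0}$ on $\hat{C}_+$ are both in line with the paper's appeal to $S^1$-equivariant transversality.

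However, there is a genuine gap in the middle step where you say ``the split SES carries over.'' Your computation of the cohomology of $\hat{C}_+$ and $\check{C}_+$ for $V\times\D$ uses the \emph{splitting} almost complex structure $J_1\oplus J_2$, via the explicit differentials of Proposition \ref{prop:product}. But the neck-stretching identification of the filling $W$ with $V\times\D$ requires a \emph{stretched} almost complex structure, which does \emph{not} split; these are genuinely different choices of $J$. So knowing the chain-level SES for $W$ (stretched $J$) equals the chain-level SES for $V\times\D$ (stretched $J$) does not by itself tell you the cohomologies of $\hat{C}_+$ and $\check{C}_+$, because the only place you computed them explicitly was for split $J$. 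The paper fills this gap with a continuation-map argument: the continuation map from the split complex to the stretched complex (using $S^1$-independent $J$'s throughout) restricts to chain maps on $\hat{C}_+$ and $\check{C}_+$ separately, and these restrictions are isomorphisms because they are upper triangular with respect to the filtration by values of $f$ (equivalently symplectic/contact action) and are the identity on the diagonal. This is the step your proposal skips, and without it you cannot conclude that the stretched SES for $V\times\D$ --- hence for $W$ --- has the cohomology groups claimed.
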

\begin{proof}
	For a sufficiently stretched $S^1$-independent almost complex structure, the short exact sequence $0\to \hat{C}_+\to C_+\to \check{C}_+\to 0$ is the same for $V\times \D$ and $W$ by the action argument in Proposition \ref{prop:positive}.  For $V\times \D$, it is important to note that a stretched almost complex structure does not split. However, the continuation map from a splitting almost complex structure to a sufficiently  stretched almost complex structure induces a morphism between the short exact sequences using $S^1$-independent almost complex structures. It is clear by action reasons, the induced continuation maps are isomorphisms (upper triangular w.r.t.\ to the filtration from the values of $f$, i.e.\ the filtration from the symplectic action/contact action, and are identity on the diagonal) on $\check{C}_+$ and $\hat{C}_+$.  As a consequence, the induced long exact sequence is isomorphic to the one from the splitted $J$ in Proposition \ref{prop:product}, whose long exact sequence splits, i.e.\ gives rise to the short exact sequence in the claim. The last claim follows from the  $S^1$-equivariant transversality, as there is no differential (no rigid cascades) from hat orbits to constant orbits by the free $S^1$ action.
\end{proof}

\subsection{A continuation map}
In \S \ref{s3}, we need to stretch along the contact hypersurface $Y_{\epsilon}$ to prove certain independence of fillings. Since we do not have $c_1(Y)=0$, the Fredholm index of a curve also depends on the relative homology class. We need to show that the relative homology class is always trivial and for this we will use the contact energy (see Proposition \ref{prop:homology}), hence we had better use the symplectic cohomology with admissible Hamiltonians in \S \ref{ss:sympcoh}. Therefore we need a continuation map relating $SH_+^{*,\le a, \le b}(V\times W)$ and $SH_+^{*\le c}(V\times W)$. This was constructed in \cite{oancea2006kunneth} for the proof of the K\"unneth formula, we recall an adapted version for the case in this paper. Let $H_{1+\epsilon+2\delta}$ be an admissible Hamiltonian on $\widehat{V}\times \C$ of type \eqref{I} of the contact hypersurface $Y_{\epsilon}$ with slope $1+\epsilon+2\delta$.  
\begin{proposition}\label{prop:nice}
	For any sufficiently small $\epsilon$, we can arrange that $K_{1+\delta}\oplus \epsilon(f-1)$ is pointwise no greater than $H_{1+\epsilon+2\delta}$ on $\widehat{V}\times \C$. Moreover, when $\epsilon$ is sufficiently small, for any critical points $p,q$ of $f$, such that $f(p)>f(q)$, we have 
	$$\cA_{H_{1+\epsilon+2\delta}}(\overline{\gamma}_p)> \cA_{K_{1+\delta}\oplus \epsilon(f-1)}(p\otimes\overline{\gamma}_0) > \cA_{H_{1+\epsilon+2\delta}}(\overline{\gamma}_q)>
	\cA_{K_{1+\delta}\oplus \epsilon(f-1)}(q\otimes\overline{\gamma}_0)$$
\end{proposition}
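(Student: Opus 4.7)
The plan is to verify the pointwise inequality and the interleaved chain of four symplectic actions separately, using the freedom to shape both $K_{1+\delta}$ and $H_{1+\epsilon+2\delta}$.

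For the pointwise inequality, I observe that inside $Y_\epsilon$ one has $r_V\le 1$ and hence $\epsilon(f-1)\le 0$, while the inside region of $Y_\epsilon$ only reaches into the cylindrical collar of $\partial \D$ up to $\rho^2\le (1+\epsilon)/(1+\epsilon f)$, an interval of width at most $\epsilon$. I would design the radial profile $k$ of $K_{1+\delta}$ with a short flat plateau $k\equiv k(1)\le 0$ on $[1,1+\epsilon\delta]$ before its slope-$(1+\delta)$ linear segment begins (with a small smoothing transition). With this choice, $K_{1+\delta}\oplus\epsilon(f-1)\le 0=H_{1+\epsilon+2\delta}$ inside $Y_\epsilon$. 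Outside $Y_\epsilon$, identifying the $Y_\epsilon$-collar coordinate $r$ to leading order as $\rho^2(1+\epsilon f)/(1+\epsilon)$ via the Liouville flow $X_\lambda\oplus\tfrac{1}{2}\rho\partial_\rho$, a direct expansion of $H_{1+\epsilon+2\delta}-K_{1+\delta}\oplus\epsilon(f-1)$ in $\epsilon$ and $\delta$ produces a lower bound of the form
$$(\rho^2-1)(\epsilon f+\delta)+\epsilon\delta(2f-1+\delta),$$
and combining with the constraint $\rho^2-1\ge\epsilon(1-f)/(1+\epsilon f)$ yields a manifestly non-negative expression of order $\epsilon^2 f(1-f)+\epsilon\delta f+\epsilon\delta^2$. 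An analogous check in the region $\widehat{V}\times\D$ uses that the radial slope $1+\epsilon+2\delta$ of $H$ exceeds the sum of the $\C$-slope $1+\delta$ and the $V$-slope $\epsilon$.

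For the action computation, use $\cA(\gamma)=-\int_\gamma\widehat{\lambda}+\int_\gamma H$ and the fact that the lifted orbit $\overline{\gamma}_p$ of $X_{H_{1+\epsilon+2\delta}}$ sits at collar coordinate $r_p$ with $h'(r_p)=T_p:=(1+\epsilon)/(1+\epsilon f(p))$. A convexity argument gives $\cA_H(\overline{\gamma}_p)=-T_p-\eta_H(p)$ with $\eta_H(p)=\int_1^{r_p}[T_p-h'(s)]\,ds\ge 0$, which can be made arbitrarily small by narrowing the transition width of $h$. Meanwhile $\cA_{K_{1+\delta}\oplus\epsilon(f-1)}(p\otimes\overline{\gamma}_0)=\cA_{K_{1+\delta}}(\overline{\gamma}_0)+\epsilon(f(p)-1)=-(1+\epsilon+\eta_K)+\epsilon f(p)$, where $\eta_K\approx\epsilon\delta$ for the $k$ above. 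Substituting and subtracting yields
$$\cA_H(\overline{\gamma}_p)-\cA_K(p\otimes\overline{\gamma}_0)=\frac{\epsilon^2 f(p)(1-f(p))}{1+\epsilon f(p)}+\eta_K-\eta_H(p),$$
and a parallel expression for the middle comparison whose leading term is $\epsilon(f(p)-f(q))/(1+\epsilon f(q))$. Since the critical values of $f$ form the finite set $\{0\}\cup\{1/(2n+1-i):1\le i\le 2n-1\}$, any two distinct critical values differ by at least some fixed $c=c(V)>0$. Fixing $\delta<c$ at the outset makes the middle inequality $\cA_K(p\otimes\overline{\gamma}_0)>\cA_H(\overline{\gamma}_q)$ have leading advantage $\epsilon c$ over $\eta_K\approx\epsilon\delta$ and the $O(\epsilon^2)$ corrections, so it holds for all sufficiently small $\epsilon$; the outer two inequalities follow by arranging $\eta_H$ to be $o(\epsilon\delta)$, whence $\eta_K-\eta_H(p)>0$.

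The main obstacle is the tension in the pointwise step: inside $Y_\epsilon$ near the minimum of $f$ the variable $\rho^2$ ranges over $[1,1+\epsilon]$, so $K_{1+\delta}$ must remain below $\epsilon$ across an interval of length $\epsilon$, forcing $\eta_K\gtrsim\epsilon\delta$; but the middle action inequality then needs $\delta<c(V)$ to still close. The plateau-of-length-$\epsilon\delta$ shape of $k$ is what balances both demands and clarifies why the proposition holds for "$\epsilon$ sufficiently small" once $\delta$ has been fixed below the critical-value gap of $f$.
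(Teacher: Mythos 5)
Your approach differs from the paper's in a structurally important way: the paper first works with the \emph{extreme} (non-smooth) Hamiltonians $K_{1+\delta}$ and $H_{1+\epsilon+2\delta}$, where $K_{1+\delta}(\rho^2)=\rho^2-1$ with slope exactly $1$ immediately outside $\D$ and $H$ has a kink at $Y_\epsilon$. For these one computes $\cA_{H}(\overline\gamma_p)=-\tfrac{1+\epsilon}{1+\epsilon f(p)}$ and $\cA_{K\oplus\epsilon(f-1)}(p\otimes\overline\gamma_0)=-1+\epsilon(f(p)-1)$ exactly; the chain is then verified directly, with equality occurring only when the relevant critical point is the minimum. Smooth admissible Hamiltonians are obtained afterwards by perturbing $K$ strictly downward outside $\D$ by an \emph{arbitrarily small} amount (which turns the borderline equalities into strict inequalities without threatening the $O(\epsilon)$ gap in the middle inequality), and then perturbing $H$ upward. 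The pointwise inequality outside $Y_\epsilon$ is established not by expanding the difference in $\epsilon,\delta$ but by differentiating along the Liouville flow $X_\lambda\oplus\tfrac12\rho\partial_\rho$ from $Y_\epsilon$, separately on the graph $\rho^2=\tfrac{1+\epsilon}{1+\epsilon f}$ and the graph $r=g_\epsilon(\rho^2)$, where the derivative is seen to be positive.

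The real issue in your write-up is the claim, presented as forced, that $\eta_K\gtrsim\epsilon\delta$, hence that one must fix $\delta$ below the critical-value gap $c(V)$ of $f$. That bound is an artifact of your particular design (plateau of length $\epsilon\delta$ followed \emph{immediately} by slope $1+\delta$), not a genuine obstruction. The constraint you identify is $K(\rho^2)\le\tfrac{(1+\epsilon)(\rho^2-1)}{\rho^2}$ for $\rho^2\in[1,1+\epsilon]$; in particular $K(1+\epsilon)\le\epsilon$. If one instead makes $K$ vanish on $[1,1+\eta]$ for an arbitrarily small $\eta$ and then have slope only slightly greater than $1$ (say $1+\alpha$ with $\alpha\le\tfrac{\eta}{\epsilon-\eta}$) across $[1+\eta,1+\epsilon]$, deferring the ramp to slope $1+\delta$ until after $\rho^2>1+\epsilon$, the pointwise bound is still satisfied while the generating orbit moves only to $\rho_0^2\approx1+\eta$, so $\eta_K\approx\eta$ can be made smaller than any prescribed function of $\epsilon$. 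This is exactly what the paper's perturbation (``picks up the first Reeb orbit shortly after $\rho^2=1$, then maintains a slope slightly bigger than $1$'') accomplishes, and it is why the proposition needs no hypothesis on $\delta$ beyond $0<\delta<1$. Your conclusion that the proposition ``holds for $\epsilon$ sufficiently small once $\delta$ has been fixed below the critical-value gap'' therefore asserts a spurious dependence; as written it reads as a necessity claim, and that claim is false. Secondarily, the pointwise step outside $Y_\epsilon$ is argued only ``to leading order'' via an approximate identification of the $Y_\epsilon$-collar coordinate, and the second boundary piece $r=g_\epsilon(\rho^2)$ gets only a one-sentence gesture; the paper's exact derivative computation along the Liouville flow, done separately on both pieces, is what makes that step rigorous.
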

\begin{proof}
	We first prove the claim for the extreme case, then we will argue that we can perturb the extreme case to get admissible choices of $K_{1+\delta}$ and $H_{1+\epsilon+2\delta}$. The extreme case is that $K_{1+\delta}=0$ on $\D$ and is linear of slope $1$ w.r.t.\ $\rho^2$ outside $\D$, then picks up the slope $1+\delta$ outside a very large compact set. The Hamiltonian orbit is considered as placed at $\partial \D$. $H_{1+\epsilon+2\delta}$ is $0$ inside $Y_{\epsilon}$ and has slope $1+\epsilon+2\delta$ outside $Y_{\epsilon}$, the Hamiltonian orbit $\overline{\gamma}_p$ is considered as on $Y_{\epsilon}$. Then we have
	$$\cA_{H_{1+\epsilon+2\delta}}(\overline{\gamma}_p)=-\frac{1+\epsilon}{1+\epsilon f(p)}, \quad \cA_{K_{1+\delta}\oplus \epsilon(f-1)}(p\otimes \overline{\gamma}_0)=-1+\epsilon(f(p)-1).$$
	Note that 
	$$-\frac{1+\epsilon}{1+\epsilon f(p)}-(-1+\epsilon(f(p)-1))=\epsilon(1-f(p))(1-\frac{1}{1+\epsilon f(p)})$$
	which is non-negative as $0\le f(p)<1$ and is zero if and only if when $p$ is the minimum of $f$. Hence we have $$\cA_{H_{1+\epsilon+2\delta}}(\overline{\gamma}_p) \ge \cA_{K_{1+\delta}\oplus \epsilon(f-1)}(p\otimes\overline{\gamma}_0)$$
	for all critical points $p$ and the only case when the equality holds is 
	when $p$ is the minimum point. Moreover, recall from \S \ref{ss:contact}, if $\ind(q)=2n-k>0$, then $f(q)=\frac{1}{k+1}$. Now let $p$ be another critical point of $f$ with $f(p)=\frac{1}{k}$, i.e.\ $\ind(p)=2n+1-k$. Then we have
	\begin{eqnarray*}
	\cA_{K_{1+\delta}\oplus \epsilon(f-1)}(p\otimes \overline{\gamma}_0)-\cA_{H_{1+\epsilon+2\delta}}(\overline{\gamma}_q) & = & -1+\epsilon(\frac{1}{k}-1)+\frac{1+\epsilon}{1+\epsilon\frac{1}{k+1}} \\
	& = & \epsilon \frac{1-\frac{1}{k+1}}{1+\epsilon \frac{1}{k+1}}+\epsilon (\frac{1}{k}-1)\\
	& = & \frac{\epsilon}{1+\epsilon \frac{1}{k+1}} \left(1-\frac{1}{k+1}+(\frac{1}{k}-1)(1+\epsilon\frac{1}{k+1}) \right)\\
	& = & \frac{\epsilon}{1+\epsilon \frac{1}{k+1}} \frac{1-(k-1)\epsilon}{k(k+1)}
	\end{eqnarray*}
	Therefore when $\epsilon<\frac{1}{k-1}$, we have $$\cA_{H_{1+\epsilon+2\delta}}(\overline{\gamma}_q)<\cA_{K_{1+\delta}\oplus \epsilon(f-1)}(p\otimes \overline{\gamma}_0).$$
	When $q$ is the minimum point, hence $\cA_{H_{1+\epsilon+2\delta}}(\overline{\gamma}_q)=-1-\epsilon=\cA_{K_{1+\delta}\oplus \epsilon(f-1)}(q\otimes \overline{\gamma}_0)$, which is smaller than $\cA_{K_{1+\delta}\oplus \epsilon(f-1)}(p\otimes \overline{\gamma}_0)$ for any $p$ that is not the minimum. Therefore we have proven for the extreme case and $\epsilon$ sufficiently small ($<\frac{1}{2n}$) that 
		\begin{equation}\label{eqn:order}
		\cA_{H_{1+\epsilon+2\delta}}(\overline{\gamma}_p) > \cA_{K_{1+\delta}\oplus \epsilon(f-1)}(p\otimes\overline{\gamma}_0) > \cA_{H_{1+\epsilon+2\delta}}(\overline{\gamma}_q) \ge \cA_{K_{1+\delta}\oplus \epsilon(f-1)}(q\otimes\overline{\gamma}_0)
		\end{equation}
	for any critical points $p,q$ with $f(p)>f(q)$, with equality holds only for $q$ is the minimum point.
	
	We claim if $\epsilon$ is small enough, then $K_{1+\delta}\oplus \epsilon(f-1)\le H_{1+\epsilon+2\delta}$ pointwise. 
	
	We first claim that inside $Y_{\epsilon}$, we have $K_{1+\delta}\oplus \epsilon(f-1)\le H_{1+\epsilon+2\delta}$. Since on the sub-domain of $V\times \D$ that is bounded by $Y_{\epsilon}$, we have that $H_{1+\epsilon+2\delta}=0$, $K_{1+\delta}=0$ and $ \epsilon(f-1)\le 0$, hence the claim holds on that sub-domain. Then on the domain outside $V\times \D$ and inside $Y_{\epsilon}$, we have that $H_{1+\epsilon+2\delta}=0$, $K_{1+\delta}\le \frac{1+\epsilon}{1+\epsilon f}-1=\frac{\epsilon (1-f)}{1+\epsilon f}$. Since $\frac{\epsilon(1-f)}{1+\epsilon f}+\epsilon(f-1) = -\epsilon^2f\frac{1-f}{1+\epsilon f}\le 0$, the claim holds on that sub-domian. 
	
	Then we will show that for any point on $Y_{\epsilon}$, the inequality holds along the (positive) flow of the Liouville vector field $X_{\lambda}+\frac{1}{2}\rho \partial_{\rho}$. Since the angular coordinate on $\C$ does not matter, we choose $(x,\rho^2)$ for $x\in V,\rho \in \R^+$ to represent the point. Then after time $t$ flow of $X_{\lambda}+\frac{1}{2}\rho \partial_{\rho}$, the point is $(\phi_t(x), \rho^2 e^t)$, where $\phi_t$ is the flow of $X_{\lambda}$. We separate $Y_{\epsilon}$ into the graph of $\rho^2=\frac{1+\epsilon}{1+\epsilon f}$ and the graph of $r=g_{\epsilon}(\rho^2)$ as in \S \ref{ss:contact}.
	
	On the first graph, we have $\rho^2=\frac{1+\epsilon}{1+\epsilon f(x)},$
	then we have 
	$$H_{1+\epsilon+2\delta}(\phi_{t}(x),\frac{1+\epsilon}{1+\epsilon f(x)}e^t)=(1+\epsilon+2\delta)(e^t-1),$$ and 
	$$(K_{1+\delta}\oplus \epsilon(f-1))(\phi_t(x),\frac{1+\epsilon}{1+\epsilon f(x)}e^t)=\frac{1+\epsilon}{1+\epsilon f(x)}e^t-1+\epsilon(f\circ \phi_t(x)-1),$$ 
	while the point is still on the domain where the slope of $K_{1+\delta}$ is $1$. Therefore we compute
	\begin{equation}\label{eqn:diff1}
	\frac{\rd}{\rd t}\left(H_{1+\epsilon+2\delta}(\phi_{t}(x),\frac{1+\epsilon}{1+\epsilon f(x)}e^t)-\left(K_{1+\delta}\oplus \epsilon(f-1)\right)(\phi_t(x),\frac{1+\epsilon}{1+\epsilon f(x)}e^t)\right)\ge 2\delta e^t-\epsilon X_{\lambda}(f\circ \phi_t(x))
	\end{equation}
	If $\phi_t(x)\notin V$, assume $\phi_{t_0}(x)\in \partial V$. Since $(x,\rho^2)$, by assumption, is on the graph of  $\rho^2=\frac{1+\epsilon}{1+\epsilon f}$, we have $x$ is contained in $V$. As a consequence, we have $t_0>0$. Since $\partial_r f = 1$ outside $V$, where $r=e^{t-t_0}$, then $X_{\lambda}(f\circ \phi_t(x))=e^{t-t_0}\le e^t$. Then for $\epsilon$ small enough ($<2\delta$), we have \eqref{eqn:diff1} is positive. When $K_{1+\delta}$ starts to pick up the slope of $1+\delta$ for $t$ very big. The \eqref{eqn:diff1} decrease at most $\frac{\delta(1+\epsilon)}{1+\epsilon f(x)}e^t$, which will not change the sign for $\epsilon \ll  1$. 
	
	On the graph of $r=g_{\epsilon}(\rho^2)$, we use the $(r,\rho^2)=(g_{\epsilon}(\rho^2),\rho^2)$ coordinate. After time $t$, the new coordinate is $(g_{\epsilon}(\rho^2)e^t,\rho^2e^t)$. Then we can compute 
	\begin{equation}\label{eqn:diff2}
	\frac{\rd}{\rd t}\left(H_{1+\epsilon+2\delta}(g_{\epsilon}(\rho^2)e^t,\rho^2e^t)-\left(K_{1+\delta}\oplus \epsilon(f-1)\right)(g_{\epsilon}(\rho^2)e^t,\rho^2e^t)\right)\ge (1+\epsilon+2\delta-\rho^2) e^t-\epsilon\frac{\rd}{\rd t} \left(f(g_{\epsilon}(\rho^2)e^t)\right),
	\end{equation}
	while the point is on the domain where the slope of $K_{1+\delta}$ is still $1$. Since $\rho^2\le \frac{1+\epsilon}{1+\epsilon f(\frac{1}{2})}<1+\epsilon$, $\frac{1}{2}\le g_{\epsilon}(\rho^2) \le \frac{3}{4}$ and $\partial_r f\le 1$, we have \eqref{eqn:diff2} $\ge 2\delta e^t-\epsilon e^t$. Therefore for $\epsilon$ small enough, \eqref{eqn:diff2} is positive. When $K_{1+\delta}$ starts to pick up the slope $1+\delta$, \eqref{eqn:diff2} decreases at most $\rho^2\delta e^t \le (1+\epsilon)\delta e^t$, which will not change the sign.
	
	To sum up, in the extreme case, we have that $K_{1+\delta}\oplus \epsilon(f-1)$ is not greater than $H_{1+\epsilon+2\delta}$, with the equality holds only on the sub-domain  of $V\times \D $ bounded by $Y_{\epsilon}$. Then we modify $K_{1+\delta}$ to a smooth function, such that it picks up the first Reeb orbit shortly after $\rho^2=1$, then maintains a slope slightly bigger than $1$ for a very long time, then gradually picks up the slope till it is $1+\delta$. Then the modified $K_{1+\delta}$ is strictly smaller than the extreme $K_{1+\delta}$ outside $\D$. Such modification will decrease the symplectic action by an arbitrarily small amount, then \eqref{eqn:order} becomes strict. Then we can perturb $H_{1+\epsilon+2\delta}$ to a smooth one, which is pointwise no less than $K_{1+\delta}\oplus \epsilon(f-1)$. The strict order in \eqref{eqn:order} can be preserved under such a small change.
\end{proof}

Following \cite{oancea2006kunneth}, we can build a continuation map from $C^*(K_{1+\delta}\oplus \epsilon(f-1))\to C^*(H_{1+\epsilon+2\delta})$ using a decreasing homotopy of Hamiltonians, which also induces a continuation map for the positive cochain complexes. 
\begin{remark}
	Strictly speaking, one needs to modify $K_{1+\delta}\oplus \epsilon(f-1)$ outside a large compact set before interpolating the geometric data to guarantee that a maximal principle holds. This procedure will create many periodic orbits with arbitrarily large positive symplectic action, hence $C^*(K_{1+\delta}\oplus \epsilon(f-1))$ is a quotient complex and the continuation map does not see those extra generators, since the continuation map increases symplectic actions and the symplectic action of orbits of $H_{1+\epsilon+2\delta}$ are bounded above. See \cite{oancea2006kunneth} for details of the construction of this continuation map, but note that our convention of symplectic action is different from \cite{oancea2006kunneth} by a sign.
\end{remark}
For $C_+(K_{1+\delta}\oplus\epsilon(f-1))$ and $C_+(H_{1+\epsilon+2\delta})$, we have a filtration induced by the symplectic action. Since our choice of $f$ is self-indexing, the filtration $F_k\supset F_{k+1}$ is the following,
$$F_k:=\la p\otimes\check{\gamma}_0,p\otimes \hat{\gamma}_0|\ind(p)\ge k\ra, \text{ or }\la \check{\gamma}_p,\hat{\gamma}_p|\ind(p)\ge k\ra$$
for $C_+(K_{1+\delta}\oplus \epsilon(f-1))$ and $C_+(H_{1+\epsilon+2\delta})$ respectively. We also have filtrations $\check{F}_k,\hat{F}_k$ on $\check{C}_+$ and $\hat{C}_+$ compatible with the short exact sequence. The significance of Proposition \ref{prop:nice} is that the continuation map will preserve the filtration. The purpose of such filtration is to substitute the $\Z$ grading on symplectic cohomology, which may not exist if the first Chern class of the filling does not vanish. The following result follows from Proposition \ref{prop:nice} and neck-stretching as in Proposition \ref{prop:positive}.

\begin{proposition}\label{prop:filtration}
For any sufficient small $\epsilon$, the continuation map $H^{*}(C_+(K_{1+\delta}\oplus \epsilon(f-1))) \to H^{*}(C_+(H_{1+\epsilon+2\delta}))$ is independent of the filling $W$ of $Y$. Moreover, the continuation map preserves the filtration and the short exact sequences of check and hat orbits. 
\end{proposition}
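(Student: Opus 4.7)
The plan is to verify the three assertions—independence of filling, filtration preservation, and compatibility with the check/hat short exact sequence—by combining the energy-action estimate for monotone continuations with the input of Propositions \ref{prop:nice}, \ref{prop:positive}, and \ref{prop:checkhat}. I would work with a single continuation homotopy together with one family of $S^1$-invariant, sufficiently stretched almost complex structures that renders all three arguments simultaneously valid.

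For filling independence, rescale so that $Y_0$ is contained in the exact neighborhood of $\partial W$ as in Proposition \ref{prop:positive}, and neck-stretch along $Y_0$. The action differences between all asymptotic orbits of the continuation map are controlled by $\epsilon$ and $\delta$ and can be made strictly less than $1$, while every Reeb orbit on $Y_0$ has period at least $1$. The action constraint \eqref{eqn:positive} then forbids any SFT-breaking off a fully stretched neck, so all relevant continuation trajectories stay outside $Y_0$, and the continuation map depends only on the germ of the contact structure near $Y$.

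For filtration preservation, Proposition \ref{prop:nice} permits a monotone decreasing homotopy from $H_{1+\epsilon+2\delta}$ at $s=-\infty$ to $K_{1+\delta}\oplus\epsilon(f-1)$ at $s=+\infty$ (with the small modification outside a large compact set noted in the preceding Remark to guarantee the maximum principle). The standard energy-action identity yields $\cA_H(q)\ge \cA_K(p)$ for every continuation cylinder with input $p$ (a $K$-orbit at $s=+\infty$) and output $q$ (an $H$-orbit at $s=-\infty$). If $p\otimes\overline{\gamma}_0\in F_k$, i.e.\ $\ind(p)\ge k$, then $\cA_K(p\otimes\overline{\gamma}_0)=-1+\epsilon(f(p)-1)\ge \cA_K(p_k\otimes\overline{\gamma}_0)$ for any index-$k$ critical point $p_k$, since $f$ is self-indexing. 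Applying the strict ordering of Proposition \ref{prop:nice}, the image $\overline{\gamma}_q$ satisfies
\[\cA_H(\overline{\gamma}_q)\ge \cA_K(p_k\otimes\overline{\gamma}_0)>\cA_H(\overline{\gamma}_{q'})\]
for every index-$(k-1)$ critical point $q'$. Since $\cA_H(\overline{\gamma}_q)=-\tfrac{1+\epsilon}{1+\epsilon f(q)}$ is also increasing in $\ind(q)$, this forces $\ind(q)\ge k$, hence $\overline{\gamma}_q\in F_k$.

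For the short exact sequence of check and hat orbits, choose the almost complex structures in the continuation homotopy to be $S^1$-invariant (time-independent), which is permissible since only simple Reeb orbits are involved. Then every moduli space of continuation cylinders from a hat orbit at $s=+\infty$ to a check orbit at $s=-\infty$ carries a free $S^1$-action, and hence admits no rigid elements, exactly as in the proof of Proposition \ref{prop:checkhat}. Therefore the continuation map respects the subcomplex $\hat{C}_+\subset C_+$ and descends to a morphism of short exact sequences, with the filtration preservation on $\check{C}_+$ and $\hat{C}_+$ inherited from that on $C_+$. The main obstacle is the simultaneous realization of all the technical ingredients—transversality under $S^1$-invariance, the maximum principle along the decreasing homotopy, and the neck-stretching along $Y_0$—in a single coherent family of almost complex structures; once this is arranged, the three assertions follow from the action estimates and the $S^1$-symmetry.
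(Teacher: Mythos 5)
Your proposal is correct and fills in exactly the argument the paper alludes to with its one-line justification ("follows from Proposition \ref{prop:nice} and neck-stretching as in Proposition \ref{prop:positive}"). The three ingredients you assemble—(a) neck-stretching along $Y_0$ with the action bound $\ge 1$ on Reeb periods while all Hamiltonian-orbit actions lie in an $O(\epsilon)$ window near $-1$, giving filling-independence of the continuation trajectories; (b) the action-increasing property of the decreasing homotopy combined with the strict interlacing of $\cA_K$ and $\cA_H$ values from Proposition \ref{prop:nice} to force $\ind(q)\ge\ind(p)$; and (c) $S^1$-equivariant transversality for an $S^1$-independent $J$ to kill rigid continuation cylinders from hat to check orbits—are precisely what the paper intends and are each correctly deployed. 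The only minor stylistic point is that the auxiliary index-$k$ critical point $p_k$ in step (b) is unnecessary: one can run the contradiction directly with $p$ itself, since $\cA_H(\overline{\gamma}_q)\ge\cA_K(p\otimes\overline{\gamma}_0)$ together with $f(p)>f(q)$ would already contradict the strict inequality $\cA_K(p\otimes\overline{\gamma}_0)>\cA_H(\overline{\gamma}_q)$ of Proposition \ref{prop:nice}. Your closing remark about realizing all three requirements on a single family of almost complex structures correctly identifies the only remaining technical bookkeeping; this is standard and the paper implicitly assumes it.
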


However, there is an unsatisfying fact about Proposition \ref{prop:nice}, i.e.\ when $\epsilon \to 0$, $H_{1+\epsilon+2\delta}$ is forced to be only $C^0$ convergent to the ``ideal" Hamiltonian, which is zero on $Y_{0}$ and is linear with slope $1+\epsilon+2\delta$ outside $Y_0$. This poses analytical problems later (\S \ref{ss31}) in the compactness argument for $\epsilon \to 0$. The following proposition remedies the issue. Recall that in SFT, we have the notion of contact energy $\int u^*\alpha$ for curves in the symplectization $(Y\times \R_+, \rd(r\alpha))$ and the energy is non-negative and it is zero if and only if $u$ a trivial solution over some Reeb trajectory \cite{bourgeois2003compactness}. In the context of Hamiltonian-Floer theory, if we use Hamiltonians of type \eqref{I} or \eqref{II}, then $X_H$ is parallel to the Reeb vector field outside $Y$. Assume we pick the almost complex structure to be cylindrical convex outside $Y$, i.e.\ $\widehat{\lambda}\circ J = \rd r$ and compatible with $\rd\widehat{\lambda}$. In this case, we can still control the contact energy for the portion outside $Y$, $\int_{u^{-1}(\widehat{W}\backslash W)} u^*(\lambda|_Y)$, which is again non-negative. When the contact energy is zero, $u|_{u^{-1}(\widehat{W}\backslash W)}$ is contained in $\gamma \times [1,\infty)$, where $\gamma$ is a Reeb trajectory on $Y$.

\begin{proposition}\label{prop:better}
	Let $H^1_{1+\epsilon+2\delta}, H^2_{1+\epsilon+2\delta}$ be two admissible Hamiltonian of type \eqref{I} with slope $1+\epsilon+2\delta$ and $H^1_{1+\epsilon+2\delta}\le H^2_{1+\epsilon+2\delta}$, then there is a continuation map from $C_+^*(H^1_{1+\epsilon+2\delta})$ to $C_+^*(H^2_{1+\epsilon+2\delta})$, which is an isomorphism and preserves the filtration and does not depend on the filling.
\end{proposition}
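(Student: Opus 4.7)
The plan is to construct the continuation map via a monotone homotopy of type \eqref{I} Hamiltonians and then verify isomorphism, filtration-preservation, and filling-independence separately.

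First, because $H^1_{1+\epsilon+2\delta}$ and $H^2_{1+\epsilon+2\delta}$ are both type \eqref{I} with the same slope $1+\epsilon+2\delta$, they both vanish on the sub-domain of $\widehat{W}$ bounded by $Y_{\epsilon}$ and are linear in the collar coordinate with the same asymptotic slope outside. Hence the generators of $C_+^*(H^1)$ and $C_+^*(H^2)$ are indexed by the same set $\{\check{\gamma}_p, \hat{\gamma}_p\}_{p\in \Crit(f)}$ with the same Conley-Zehnder indices. I would pick a monotone family $\{H_s\}_{s\in \R}$ of admissible type \eqref{I} Hamiltonians of slope $1+\epsilon+2\delta$ interpolating from $H^1$ to $H^2$, together with a cylindrical-convex almost complex structure outside $Y_{\epsilon}$. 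The integrated maximum principle then confines all continuation cylinders to a compact set, and the usual count defines a chain map $\Phi : C_+^*(H^1_{1+\epsilon+2\delta}) \to C_+^*(H^2_{1+\epsilon+2\delta})$.

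The map $\Phi$ induces an isomorphism on cohomology because both cochain complexes compute the same filtered positive symplectic cohomology $SH_+^{*,\le 1+\epsilon+2\delta}(W)$, by the usual invariance argument as in \cite[Proposition 2.8]{zhou2020mathbb}. For filtration-preservation, the relevant action computation follows the same lines as in Proposition \ref{prop:nice}: since $f$ is self-indexing and the slopes of $H^1,H^2$ agree, the symplectic actions $\cA(\overline{\gamma}_p)$ for critical points $p$ of different Morse indices are strictly separated by an amount depending only on $\epsilon$ and $\delta$, whereas a monotone homotopy changes the action in a controlled direction; this forces $\Phi$ to be upper triangular with respect to the filtration $F_k$. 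The compatibility with the short exact sequence into check and hat complexes follows from the same $S^1$-equivariant transversality argument used in Proposition \ref{prop:checkhat}, provided one chooses an $S^1$-independent almost complex structure for the homotopy.

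For filling-independence, the strategy is to neck-stretch along $Y_0$ sitting inside $\widehat{W}$ (after rescaling, as in Proposition \ref{prop:positive}). The generators of $C_+^*(H^i)$ lie in a fixed collar of $Y_\epsilon$ that is common to all fillings, so it suffices to rule out, for a sufficiently stretched almost complex structure, any continuation cylinder whose image meets the stretched neck nontrivially. By the contact energy control recalled just before the statement, any piece of such a cylinder lying outside $Y_0$ carries nonnegative contact energy, and any nontrivial SFT-type level in $\widehat{W'}$ or in the symplectization $Y_0 \times \R_+$ would contribute contact energy bounded below by the shortest Reeb period on $Y_0$, which is at least $1$ by the choices in \S \ref{ss:contact}. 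On the other hand, the symplectic action difference between any two generators of $C_+^*(H^i)$ is uniformly small for $\epsilon$ small, ruling out any such breaking and confining all continuation cylinders to a neighborhood of $Y_\epsilon$ independent of $W$.

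The main obstacle is the contact energy bookkeeping across the entire homotopy $\{H_s\}$: one must verify that cylindrical convexity of $J$ outside $Y_\epsilon$ together with monotonicity of $H_s$ gives a genuinely nonnegative contact-energy contribution for the part of a continuation cylinder lying outside $Y_\epsilon$, uniformly in $s$. This is the refinement that forces genuine type \eqref{I} Hamiltonians on both sides (as opposed to the hybrid setup of Proposition \ref{prop:nice}, where only $C^0$ convergence was available as $\epsilon\to 0$), and is the reason this proposition is stated separately in preparation for the compactness arguments in \S \ref{ss31}.
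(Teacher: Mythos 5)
Your overall skeleton (construct the continuation map with a monotone type \eqref{I} homotopy, then check isomorphism, filtration, and filling-independence) matches the paper, and your filling-independence argument by neck-stretching along $Y_0$ is essentially the paper's. However, there is a genuine gap in the middle two steps, and it matters for how the proposition is used in \eqref{eqn:continuation}.

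The paper does \emph{not} argue filtration preservation via symplectic action and isomorphism via general invariance. Instead it chooses the homotopy $H_s$ so that $X_{H_s}$ is parallel to the Reeb vector field everywhere outside $Y_\epsilon$, takes a cylindrical-convex $J$, and uses the \emph{contact} energy $\int u^*\pi^*(\lambda|_{Y_\epsilon})\ge 0$. This gives: (i) the continuation map preserves the period (= Morse index) filtration, and (ii) contact energy is zero iff $u$ is a reparametrized trivial cylinder, which is transversally cut out. Point (ii) is the crucial extra content: it shows the continuation map is $\Id + N$ at the chain level with $N$ strictly increasing the filtration $F_k$, hence a \emph{chain isomorphism} whose inverse is also upper triangular with identity on the diagonal. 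Your proof establishes at best that the map is upper triangular and a quasi-isomorphism, which does not yield that the \emph{inverse} preserves $F_k$; but that is exactly what is needed when the inverse is inserted in \eqref{eqn:continuation} and the associated graded groups are compared across fillings in the proof of Theorem \ref{thm:main}.

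Two smaller issues. First, your filtration claim via symplectic action can in fact be made rigorous (if $h_1\le h_2$ then, by the Legendre-transform monotonicity $h_1^*\ge h_2^*$, one gets $\cA_{H^2}(\overline\gamma_q)\le\cA_{H^1}(\overline\gamma_q)<\cA_{H^1}(\overline\gamma_p)$ whenever $T_{\gamma_q}>T_{\gamma_p}$), but your stated justification ``the actions are separated by an amount depending only on $\epsilon$ and $\delta$'' is misleading, since the action of a fixed orbit under two type \eqref{I} Hamiltonians of the same slope but different widths can differ by $O(1)$, not $O(\epsilon)$. Second, appealing to the general invariance argument of \cite[Proposition 2.8]{zhou2020mathbb} only shows the map is a quasi-isomorphism; you would still need the diagonal terms of the chain map to be nonzero, which is precisely what the paper's contact-energy/trivial-cylinder argument supplies and your proof does not.
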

\begin{proof}
	Let $H_s$ be the obvious decreasing homotopy from $H^2_{1+\epsilon+2\delta}$ to $H^1_{1+\epsilon+2\delta}$ such that for each $s$, $H_s$ only depends on $r$ and $X_{H_s}$ is parallel to the Reeb vector field. Then we can pick a regular almost complex structure such that it is cylindrical outside $Y_{\epsilon}$, such that all relevant moduli spaces stay outside $Y_{\epsilon}$. This is again by neck-stretching as in Proposition \ref{prop:positive} and the regularity is possible since we only consider simple orbits. Then in such a special case, $X_{H_s}$ is parallel to the Reeb vector field everywhere, the contact energy $\int_{\R\times S^1} u^*(\pi^*(\lambda|_{Y_{\epsilon}}))$ is non-negative for any Floer solution $u$, where $\pi$ is the projection from the positive symplectization $Y_{\epsilon}\times [1,\infty)$ (this is exactly symplectomorphic to the sub-domain of $\widehat{W}$ outside $Y_\epsilon$) to $Y_{\epsilon}$. As a consequence, the continuation preserves the contact action filtration, which is just the filtration from the period of Reeb orbits, or equivalently the Morse index filtration $F_k$. The contact energy is zero iff it is a reparameterization of a trivial cylinder, which is transverse. Therefore the continuation map is identity on the diagonal. This finishes proof.
\end{proof}
In other words, although $H_{1+\epsilon+2\delta}$ from Proposition \ref{prop:nice} does not converge as smooth functions for $\epsilon\to 0$, the associated cochain complexes do ``converge". More precisely, as a consequence of Proposition \ref{prop:better}, we can find a smooth family of functions $\widetilde{H}_{1+\epsilon+2\delta}$ for $\epsilon \ge 0$, such that each $\widetilde{H}_{1+\epsilon+2\delta}$ is admissible of type \eqref{I} of slope $1+\epsilon+2\delta$ and is pointwise no larger than the $H_{1+\epsilon+2\delta}$ constructed in Proposition \ref{prop:nice}.\footnote{As the ``ideal" limit $\displaystyle \lim_{\epsilon\to 0} H_{1+\epsilon+2\delta}$ is greater than or equal to any admissible Hamiltonian of type \eqref{I} with slope $1+\epsilon+2\delta$.} Then for $\epsilon>0$ small, the following map preserves the filtration, is compatible with the short exact sequence and is independent of the filling,
\begin{equation}\label{eqn:continuation}
    C^*_+(K_{1+\delta}\oplus \epsilon(f-1))\to C^*_+(H_{1+\epsilon+2\delta})\to C^*_+(\widetilde{H}_{1+\epsilon+2\delta}),
\end{equation}
where the second map is the inverse of the continuation map in Proposition \ref{prop:better}.

\section{Homology cobordism}\label{s3}
In this section we prove that the composition $SH_+^{*,\le 1+\delta, \le \epsilon}(W) \to SH_+^{*,\le 1+\epsilon+2\delta}(W)\to H^{*+1}(W)\mapsto H^{*+1}(Y)$ is independent of symplectically aspherical/Calabi-Yau fillings, which,  combined with the case for the standard filling $V\times \D$, will yield the proof of Theorem \ref{thm:main}. Here the first is map is the continuation map \eqref{eqn:continuation}. We separate the proof into the symplectically aspherical case and the Calabi-Yau case. The symplectically aspherical case is more involved due to the missing of a global $\Z$ grading. But the action filtration $F_k$ will serve as a substitute of the grading.
\subsection{ The symplectically aspherical case}\label{ss31}
Let $\widehat{W\backslash Y_{\epsilon}}$ denote the completion in the negative direction of the domain in $\widehat{W}$ outside $Y_{\epsilon}$, i.e.\ $\widehat{X}$ in \S \ref{ss:NS}. Then $\widetilde{H}_{1+\epsilon+2\delta}$ is well-defined on  $\widehat{W\backslash Y_{\epsilon}}$. We consider the moduli space $\cM_{\overline{\gamma}_p,\gamma_q}(\widetilde{H}_{1+\epsilon+2\delta})$, which is the compactification of the following
$$\left\{u:\C\backslash \{*\}\to \widehat{W\backslash Y_{\epsilon}}|\partial_s u+J(\partial_tu-X_{\widetilde{H}_{1+\epsilon+2\delta}})=0, \lim_{s\to \infty} u(t) = \overline{\gamma}_p(t+\theta), \lim_{\to *} u = (\gamma_q,-\infty) \right\}/\R$$
where $\gamma_q$ is a Reeb orbit on $Y_{\epsilon}$ which is the asymptotic of the {\em free} negative puncture $*$ and $\R$ is the translation (which moves the puncture) action on $\C$. Note that $\overline{\gamma}_p,\gamma_q$ are both contractible in $Y_{\epsilon}$ with a standard bounding disk in the standard filling $V\times \D$, which can be pushed into the boundary. For $u\in \cM_{\overline{\gamma}_p,\gamma_q}(\widetilde{H}_{1+\epsilon+2\delta})$, we use $[u]$ to denote the class in $H_2(Y)$ given by capping off $u$ with the two standard disks. $u$ is called homologically trivial iff $[u]=0$. The following is based on the compactness results in \cite{bourgeois2002morse,bourgeois2009symplectic}.

\begin{proposition}\label{prop:homology}
	For $\epsilon$ sufficiently small,  all curves in $\cM_{\overline{\gamma}_p,\gamma_q}(\widetilde{H}_{1+\epsilon+2\delta})$ for any $p,q$ must be homologically trivial.
\end{proposition}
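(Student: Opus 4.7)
The plan is to combine a Stokes-based area computation with symplectic asphericity of $W$, and then eliminate any non-trivial relative homology class by contact-energy positivity across a neck-stretch.

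First, I would apply Stokes' theorem to $u^*\omega$ on $\C \setminus \{*\}$. The Floer equation together with the fact that $\widetilde{H}_{1+\epsilon+2\delta}$ is of type (I) (vanishing on and below $Y_\epsilon$, slope $1+\epsilon+2\delta$ above) expresses the area in terms of the positive asymptote $\overline{\gamma}_p$ and the contact action $T(\gamma_q) = \int\gamma_q^*\lambda$ at the free negative puncture:
$$\int u^*\omega \;=\; -\cA_{\widetilde{H}_{1+\epsilon+2\delta}}(\overline{\gamma}_p) \;-\; T(\gamma_q) \;+\; (\text{Hamiltonian correction}).$$
By Proposition~\ref{prop:nice} and the period formula $\frac{1+\epsilon}{1+\epsilon f(p)}$, this quantity is forced into an $O(\epsilon)$ window as $\epsilon \to 0$.

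Second, I would cap $u$ by the canonical disks $D_p, D_q \subset V\times \D$ bounding $\overline{\gamma}_p$ and $\gamma_q$, both pushed to the boundary $Y$. The resulting 2-sphere $\Sigma := u \cup D_p \cup D_q$ represents $[u] \in H_2(Y)$, and under $\iota\colon Y \hookrightarrow W$ becomes a spherical class in $W$. Symplectic asphericity then gives $\int_\Sigma \omega = 0$, so $\int u^*\omega$ is pinned down uniquely by the explicit disk integrals in the standard model and is independent of the filling; in particular, the $\omega$-area carries no information beyond what is already fixed by the standard capping.

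Third, I would eliminate non-trivial classes $[u] \neq 0$ via contact energy. Since $\widetilde{H}_{1+\epsilon+2\delta}$ is of type (I), $X_{\widetilde{H}}$ is parallel to the Reeb vector field on the region outside $Y_\epsilon$, and a cylindrically convex almost complex structure there makes the contact energy $\int u^*(\lambda|_{Y_\epsilon})$ on that region non-negative, and strictly positive unless $u$ is (a reparametrization of) a trivial Reeb cylinder. A neck-stretch along a slight push-in of $Y_\epsilon$, combined with the Morse-Bott compactness framework of \cite{bourgeois2002morse, bourgeois2009symplectic}, turns any sequence of curves with $[u_k] \neq 0$ into an SFT building whose Reeb-orbit breakings necessarily carry strictly positive contact area. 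This contribution exceeds the $O(\epsilon)$ bound of the first two steps for $\epsilon$ small enough, yielding the required contradiction.

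The main obstacle is the third step: translating non-triviality in $H_2(Y)$ into a strictly positive lower bound on contact energy that beats the narrow action window. This rests on the explicit Reeb dynamics on $Y_\epsilon$ from \S\ref{ss:contact}, where for $\epsilon$ small the only short Reeb orbits available for breaking are circles over critical points of $f$, whose contact periods are bounded below by $\frac{1+\epsilon}{1+\epsilon f(\max)}$ and thus dominate the available action slack.
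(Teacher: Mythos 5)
Your contact-energy idea and appeal to the Morse--Bott compactness framework of \cite{bourgeois2002morse,bourgeois2009symplectic} are the two ingredients the paper actually uses, so you are pointing in the right direction. But there are two problems.

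First, symplectic asphericality of $W$ plays no role here. The moduli space $\cM_{\overline{\gamma}_p,\gamma_q}(\widetilde{H}_{1+\epsilon+2\delta})$ lives in $\widehat{W\backslash Y_\epsilon}$, which is essentially a symplectization collar of $Y_\epsilon$ together with the cylindrical end and does not see the filling at all. The $\omega$-area of $u$ is already pinned down by Stokes' theorem as a difference of actions, independently of $[u]$ and of the filling; invoking asphericality to ``fix'' the area computes nothing new, and more to the point does nothing to constrain the relative class $[u]\in H_2(Y)$, which is the actual object the proposition concerns. The paper's proof of this proposition never uses asphericality.

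Second, and more seriously, your step three does not close the argument at fixed small $\epsilon$. The contact-energy budget of a curve in this moduli space is $-\cA_{\widetilde{H}_{1+\epsilon+2\delta}}(\overline{\gamma}_p)-\int\gamma_q^*\lambda$, which is of size $O(\epsilon)$ because both the Hamiltonian action and the Reeb period are $\tfrac{1+\epsilon}{1+\epsilon f(\cdot)}\to 1$. The minimal period bound ($\approx 1$) you quote rules out \emph{additional} Reeb-orbit breaking in the limit building, since any extra negative puncture would cost $\approx 1$ of contact energy; that part of your reasoning is correct and relevant. But it does not rule out a single \emph{unbroken} Floer cylinder carrying a nontrivial class $[u]$ with contact energy only of order $\epsilon$. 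There is no uniform positive lower bound on the contact energy of a homologically nontrivial cylinder in terms of $[u]$, so ``the periods dominate the action slack'' does not, as written, produce the contradiction.

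The paper's way out is a compactness argument in the parameter $\epsilon$, not a neck-stretch at fixed $\epsilon$. Suppose for contradiction one had homologically nontrivial $u_\epsilon$ along $\epsilon\to 0$. Extract a limit cascade at $\epsilon=0$, a hybrid of the Morse--Bott SFT breaking of \cite{bourgeois2002morse} with the Hamiltonian--Floer cascades of \cite{bourgeois2009symplectic}. At $\epsilon=0$ all the short Reeb orbits over $V$ have identical period, so the energy budget collapses to exactly zero; the middle Floer cylinder --- the only component that can carry homology --- then has zero contact energy $\int u^*\pi^*(\lambda|_{Y_0})$, hence is a reparametrization of a trivial Reeb cylinder, hence homologically trivial. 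This contradicts persistence of the nontrivial class under Gromov compactness. In short, the ``strictly positive'' quantity you need is not a uniform lower bound on nontrivial curves at fixed $\epsilon$; it is the positivity of contact energy for a putative nontrivial limit cylinder, which the Morse--Bott degeneracy at $\epsilon=0$ forces to vanish.
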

\begin{proof}
	Assume otherwise, we have $u_{\epsilon} \in \cM_{\overline{\gamma}_p,\gamma_q}(\widetilde{H}_{1+\epsilon+2\delta})$ which are not homologically trivial for $\epsilon \to 0$. Then $u_{\epsilon}$ converges to a cascade as a hybrid of \cite{bourgeois2002morse} (for the symplectization end) and \cite{bourgeois2009symplectic} (for the Hamiltonian end). The only place which can contribute nontrivial homology is the middle Floer cylinder. But in the case when $\epsilon=0$, the contact energy $\int u^*\circ \pi^*(\lambda|_{Y_0})$ must be zero, where $\pi$ is the projection from the positive symplectization $\widehat{W\backslash Y_0}Y_{0}\times [1,\infty)$ to $Y_{0}$. Hence the middle Floer cylinder is a reparametrization of a trivial cylinder, which is homologically trivial, contradiction.
\end{proof}

Although $c_1(Y)$ is not zero as long as $c_1(V)\ne 0$, as we will see below, Proposition \ref{prop:homology} implies that the relevant moduli spaces of holomorphic  curves do not pick up nontrivial first Chern classes from $V$, which allows us to compute the dimension after neck-stretching. We are interested in the cochain map $\delta_{\partial}:C_+^*(\widetilde{H}_{1+\epsilon+2\delta})\to C_0(Y)$, which computes the map $SH^{*,\le 1+\epsilon+2\delta}_+(W)\to H^{*+1}(W)\to H^{*+1}(Y)$. For this, we pick a Morse function $h$ on $Y_{\epsilon}$ with a generic metric. Then following \cite[\S 3]{zhou2019symplectic}, we know that cochain map is define by counting the following configuration.

\begin{figure}[H]\label{fig:2}
	\begin{tikzpicture}
	\draw (0,0) to [out=90, in = 180] (0.5, 0.25) to [out=0, in=90] (1,0) to [out=270, in=0] (0.5,-0.25)
	to [out = 180, in=270] (0,0) to (0,-1);
	\draw[dotted] (0,-1) to  [out=90, in = 180] (0.5, -0.75) to [out=0, in=90] (1,-1);
	\draw (1,-1) to [out=270, in=0] (0.5,-1.25) to [out = 180, in=270] (0,-1);
	\draw (1,0) to (1,-1);
	\draw[->] (1,-1) to (1.25,-1);
	\draw (1.25,-1) to (1.5,-1);
	\draw (1.5,-1) to [out=90, in = 180] (2, -0.75) to [out=0, in=90] (2.5,-1) to [out=270, in=0] (2,-1.25)
	to [out = 180, in=270] (1.5,-1) to (1.5,-2);
	\draw (2.5,-2) to [out=270, in=0] (2,-2.5) to [out = 180, in=270] (1.5,-2);
	\draw (2.5,-1) to (2.5,-2);
	\draw[->] (-0.5,0) to (-0.25,0);
	\draw (-0.25,0) to (0,0);
	\draw[->] (2,-2.5) to (2.5,-2.5);
	\draw (2.5,-2.5) to (3,-2.5);
	\node at (2.5, -2.8) {$\nabla h$};  
	\node at (0.5,-0.5) {$u_1$};
	\node at (2, -1.5) {$u_2$};
	\end{tikzpicture}
	\caption{$\delta_{\partial}$ from $2$ level cascades}
\end{figure}
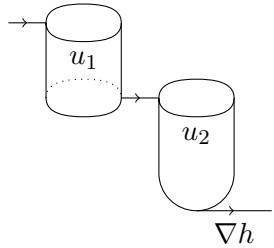
By the same $S^1$ equivariant transversality argument as before, any solution from a hat orbit is never rigid.  For $p\in \Crit(f),q\in \Crit(h)$, we use $\cM^{\partial}_{\check{\gamma}_p,q}$ to denote the compactified moduli space.
\begin{proposition}\label{prop:stretched}
Assume $\epsilon$ is sufficiently small. Let $W$ be a symplectic aspherical filling of $Y_\epsilon$, then the cochain morphism $\delta_{\partial}:C_+^*(\widetilde{H}_{1+\epsilon+2\delta})\to C_0(h)$ has the following property for some choice of $J$.
\begin{enumerate}
	\item $\delta_{\partial}(\hat{\gamma}_p)=0$.
	\item $\delta_{\partial}(\check{\gamma}_p) = a+b$ with $\ind(a)=\ind(p)$ and $\ind(b)>\ind(a)=\ind(p)$\footnote{Here $b$ may not have pure degree, then that $\ind(b)>\ind(a)$ means that any component of $b$ has larger Morse index compared to $a$.}, moreover $a$ does not depend on the filling,
\end{enumerate}
\end{proposition}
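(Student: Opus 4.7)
The plan is to prove (1) by a free $S^1$-action / equivariant transversality argument and (2) by combining neck-stretching along $Y_\epsilon$, the action inequality \eqref{eqn:positive}, Proposition~\ref{prop:nice}, and the homological triviality of Proposition~\ref{prop:homology}. Throughout I would take $J$ to be $t$-independent and fully stretched along $Y_\epsilon$ (i.e.\ $NS_0(J)$); since only simple Reeb orbits appear in the truncated complex, $S^1$-equivariant transversality holds as in \cite[Proposition 3.5]{bourgeois2009symplectic}, so the domain rotation $(\theta\cdot u)(s,t):=u(s,t-\theta)$ acts freely on the moduli space of non-trivial Floer cylinders.

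For part (1), consider a cascade $(u_1,l_1,\ldots,u_k,l_k)$ from $\hat{\gamma}_p$ to $q\in\Crit(h)$. The top matching
\[
\hat{\gamma}_p=\lim_{t\to\infty}\phi^{-t}_{-\nabla g_{\overline{\gamma}_p}}\bigl(\lim_{s\to\infty}u_1(s,0)\bigr)
\]
holds iff $\lim_{s\to\infty}u_1(s,0)$ lies in $S^1\setminus\{\check{\gamma}_p\}$, the open dense basin of attraction of the maximum of $g_{\overline{\gamma}_p}$. Rotating just $u_1$ by a small $\theta$ shifts this asymptote by $-\theta$ (still inside the basin); an implicit-function-theorem adjustment of the first horizontal length $l_1\mapsto l_1'(\theta)$ absorbs the shift in the matching with $u_2$, and the rest of the cascade (including the bottom point constraint, since $0\in\C$ is rotation-fixed) is untouched. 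This produces a locally free $S^1$-action on $\cM^\partial_{\hat{\gamma}_p,q}$; a rigid $0$-dimensional moduli space with a locally free $S^1$-action must be empty, giving $\delta_\partial(\hat{\gamma}_p)=0$.

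For part (2), the analogous top matching for $\check{\gamma}_p$ pins $\lim_{s\to\infty}u_1(s,0)$ to the single point $\check{\gamma}_p$ (the minimum of $g_{\overline{\gamma}_p}$), so no non-trivial rotation is compatible and rigid cascades can exist. Under the fully stretched $J$, a rigid limit configuration breaks into an upper Floer cascade in $Y_\epsilon\times[1,\infty)$ with top asymptote $\check{\gamma}_p$ and Reeb punctures at the bottom, possibly some SFT levels in the symplectization, and a bottom holomorphic piece in the filling $W$ whose positive punctures match the Reeb orbits and whose origin lands on $Y_\epsilon$ and flows by $-\nabla h$ to $q$. By Proposition~\ref{prop:homology} every limit curve is homologically trivial, so Fredholm indices can be computed as in $\widehat{V\times\D}$ despite $c_1(V)$ being possibly nonzero. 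The action inequality \eqref{eqn:positive}, combined with the strict ordering in Proposition~\ref{prop:nice}, forces a single bottom Reeb asymptote $\gamma_{p'}$ with $\ind(p')\ge\ind(p)$; tracking Fredholm indices through the filling piece then yields $\ind(q)\ge\ind(p')\ge\ind(p)$. Splitting $\delta_\partial(\check{\gamma}_p)=a+b$ according to $\ind(q)=\ind(p)$ versus $\ind(q)>\ind(p)$ gives the decomposition.

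The main obstacle is showing that $a$ does not depend on $W$. In the diagonal case $\ind(q)=\ind(p)$ every inequality above saturates: the unique bottom Reeb asymptote must be $\gamma_p$, the upper Floer cascade must degenerate to the canonical cascade over the trivial cylinder above $\gamma_p$, and the bottom piece in $W$ is a rigid holomorphic plane asymptotic to $\gamma_p$ whose origin is pinned by the Morse data to lie on the $Y_\epsilon$-trace of the unstable manifold of $q$. Homological triviality (Proposition~\ref{prop:homology}) fixes the relative homology class and the Fredholm dimension of this plane, and the saturated action/index balance rules out any non-trivial SFT levels or Reeb transitions interior to $W$. Consequently the signed count collapses to a local intersection number at the simple Reeb orbit $\gamma_p$, determined by $Y_\epsilon$ and $h$ alone — in particular it agrees with the count in the standard filling $V\times\D$, where the relevant cap is the obvious disk $\{p\}\times\D$. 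The crux is exactly this collapse step; it requires Proposition~\ref{prop:homology}, the action inequality \eqref{eqn:positive}, and the saturated diagonal hypothesis all working together to turn the a~priori filling-dependent plane count into a purely contact-topological invariant of $(Y_\epsilon,\gamma_p,h)$.
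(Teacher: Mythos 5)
Part (1) is essentially the paper's argument: rigidity of cascades emanating from $\hat{\gamma}_p$ is ruled out by a free $S^1$-action under $S^1$-equivariant transversality (\cite[Proposition 3.5]{bourgeois2009symplectic}), and your detailed rotation/IFT mechanism is a reasonable unpacking of that.

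Part (2) contains a genuine gap. Your reduction to the two cases $\ind(q)<\ind(p)$ (vanishing) and $\ind(q)=\ind(p)$ (filling-independence) matches the paper, and you correctly invoke neck-stretching along $Y_\epsilon$, the action constraint \eqref{eqn:positive}/\eqref{eqn:positive2}, and the homological triviality of Proposition \ref{prop:homology} to control what a stretched configuration could look like. But your ``tracking of Fredholm indices through the filling piece'' gives only $\ind(q)\ge\ind(p')\ge\ind(p)$, which is too weak. The correct dimension count of a configuration that actually crosses $Y_\epsilon$ is
$$\ind(q)-\ind(p)-(2n-\ind(w)),$$
where $\gamma_w$ is the (unique, by the action constraint) Reeb asymptote at the neck. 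Since $V$ is open, $\ind(w)\le 2n-1$, so the correction term $-(2n-\ind(w))$ is $\le -1$ strictly; thus any crossing configuration forces $\ind(q)\ge\ind(p)+1$, \emph{strictly} greater. In particular, the ``saturated'' diagonal scenario $\ind(q)=\ind(p')=\ind(p)$ that your proof is built around does not occur: in the diagonal case, \emph{no} curve passes through $Y_\epsilon$, the whole moduli space $\cM^\partial_{\check{\gamma}_p,q}$ lies in $\widehat{X}$, and filling-independence is immediate.

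Because you believe crossing configurations survive in the diagonal case, you are then forced into the claim that the resulting plane count in $\widehat{W'}$ is ``determined by $Y_\epsilon$ and $h$ alone.'' That claim is not justified and is in general false: the number of rigid holomorphic planes in a filling asymptotic to a fixed Reeb orbit is very much a filling-dependent quantity (it is exactly the sort of thing an augmentation records). The argument is rescued only because, by the dimension count above, such planes with the constraint in question do not exist for generic stretched $J$. So the intended mechanism of the proof (``collapse to a local intersection number'') is both unproven and unnecessary; the correct mechanism is the strict negativity of the virtual dimension after crossing, and this is what the paper's proof supplies.
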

\begin{proof}
The first property follows from $S^1$-equivariant transversality. In order to prove the second claim, we need to prove the following two properties.
\begin{enumerate}
	\item $\la \delta_{\partial} \check{\gamma}_p, q \ra = 0$ if $\ind(p) > \ind(q)$.
	\item $\la \delta_{\partial} \check {\gamma}_p,q\ra$ is independent of the filling if $\ind(p)=\ind(q)$.
\end{enumerate} 
Note that the first property holds for the standard filling $V\times \D$. For both claims, it is equivalent to prove that
$\la \delta_{\partial} \check {\gamma}_p,q\ra$ is independent of the filling if $\ind(p)\ge \ind(q)$.  Note that $\la \delta_{\partial} \check{\gamma}_p, q \ra = \# \cM^{\partial}_{\check{\gamma}_p,q}$, we claim $\cM^{\partial}_{\check{\gamma}_p,q}$ is contained outside $Y_{\epsilon}$ for sufficiently stretched almost complex structure as long as $\ind(p)\ge \ind(q)$. Assume otherwise, in the fully stretched situation, the top curve will have multiple negative punctures asymptotic to Reeb orbits on $Y_{\epsilon}$. Then by the action constraint \eqref{eqn:positive2}, there is exactly one negative puncture with the asymptotic Reeb orbit $\gamma_w$ for a critical point $w$ of $f$. The Conley-Zehnder index of $\gamma_{w}$ using the obvious disk is $n-\ind(w)+2$ following \cite[Theorem 6.3]{zhou2019symplectic}. Therefore the Floer part is a curve in $\cM_{\overline{\gamma}_p,\gamma_w}(\widetilde{H}_{1+\epsilon+2\delta})$ in Proposition \ref{prop:homology}, which has trivial homology class. Then by Proposition \ref{prop:homology}, the virtual dimension of such configuration is 
$$\ind(q)-\ind(p) - (2n-\ind (w))<0, \text{ when } \ind(p)\ge \ind (q).$$
As a consequence, there is no such curve. We reach at a contradiction.
\end{proof}

\begin{remark}
In the case when $V$ is Weinstein and $c_1(V)=0$, the SFT degree ($\mu_{CZ}+n-3$) of $\gamma_p$ is bounded below by $n$. However, for general $V$ with $H^{2n-1}(V) \ne 0$ and $c_1(V)=0$, the SFT degree of $\gamma_p$ is bounded below by $1$. From the proof of Proposition \ref{prop:stretched}, we see that $H^{2n}(V)\ne 0$ is exactly the borderline case for the argument fails. The proof of Proposition \ref{prop:stretched} shows that even though there might be interesting augmentations, the augmentation does not affect the part we are interested in. The situation changes dramatically when $V$ becomes closed, i.e.\ if we consider negative line bundles over a symplectically aspherical manifold $V$. Then by \cite{oancea2008fibered}, the symplectic cohomology is zero. But now the augmentation to the Reeb orbit corresponding to $H^{2n}(V)$ plays an essential role. And the elimination pattern is completely different, in particular, $1$ is only killed after we include the $n$th-multiple covers of the simple Reeb orbits, see \cite{ritter2014floer}.	
\end{remark}

\begin{proof}[Proof of Theorem \ref{thm:main} for the symplectically apherical case] We first assume $W$ is exact for simplicity. Combining Proposition \ref{prop:filtration}, \ref{prop:better} and \ref{prop:stretched} together, we know that $\Phi:C_+^*(K_{1+\delta}\oplus\epsilon(f-1))\to C_+^*(H_{1+\epsilon+2\delta})\to C_+^*(\widetilde{H}_{1+\epsilon+2\delta})\to C^{*+1}(Y)$ preserves the index filtration, and the map on the  associated graded group is independent of fillings. Then on the associated graded group of cohomology, the induced map $\oplus \Phi_k$ is also independent of fillings. Since for the standard filling $V\times \D$, $\oplus \Phi_k$ is injective on the check component (the quotient of hat component). This implies that $\Phi$ must be injective for any filling on the check component. On the other hand, note that $1\in \Ima \Phi_0$ for $V\times \D$, therefore $1+A\in \Ima \Phi$ for some $A\in \oplus_{i>0}H^{2i}(Y)$ by the $\Z/2$ grading. By Proposition \ref{prop:kill}, we have that $\delta:SH_+^{*,\le 1+\delta,\le\epsilon}(W)\to H^{*+1}(W)$ is surjective and $SH^*(W)=0$.  Moreover, by Proposition \ref{prop:checkhat}, $\delta$ factors through the projection to the check component. Then the injectivity of $\Phi$ on the check component implies that $\delta$ is an isomorphism on the check component. Hence $H^*(W)\to H^*(Y)$ is also injective. Therefore, to finish the proof, it is sufficient to show that the image of $H^*(W)\to H^*(Y)$ is also independent of filling.  The injectivity of $\oplus \Phi_k$ implies that  $\Ima \oplus \Phi_k=\oplus \left((\Ima \Phi\cap F_kH^*(Y))/(\Ima \Phi\cap F_{k+1}H^*(Y))\right)$. On the other hand, the filtration on $H^*(Y)$ is the natural filtration by grading and $\Ima \Phi$ is the image of  $H^*(W)\to H^*(Y)$, where the associated graded groups are naturally isomorphic to the original groups. Hence $\Ima \oplus \Phi_k$ is naturally isomorphic to $\Ima \Phi$. The invariance of the former implies that $\Ima \Phi$ is independent of the filling, the claim follows. The claim on homology cobordism is from Proposition \ref{prop:homologycob} below.

When $W$ is only symplectically spherical, the symplectic action is well-defined for contractible orbits but not necessarily in the form on \eqref{eqn:action}. But since all the relevant orbits $\overline{\gamma}_p$ are contractible inside the cylindrical end of the boundary, the symplectic actions of those orbits are indeed given by \eqref{eqn:action}.  Therefore the same argument above goes through for symplectically spherical fillings. Note that $V\times \D$ is built from handles with indices at most $2n-1$. As a consequence, we have that $H^1(V\times \D) \to H^1(Y)$ is an isomorphism. Combining with the fact that $H^2(V\times \D)\to H^2(Y)$ is injective, we know that the symplectic form $\omega$ on a symplectically aspherical filling $W$ is necessarily exact and has a primitive whose restriction on the boundary is the original contact form. This proves that $W$ is an exact filling. 
\end{proof}

\subsection{The Calabi-Yau case}
First of all, the symplectic cohomology and positive symplectic cohomology are defined for Calabi-Yau fillings using the Novikov coefficient $\Lambda$ over $\Q$, see \cite[\S 8]{zhou2019symplectic}. In particular, the reason that positive symplectic cohomology is defined is no longer for action restrictions but because of the asymptotic behavior lemma \cite[Lemma 2.3]{cieliebak2018symplectic}. Similar to Proposition \ref{prop:kill}, we have the following analogue for Calabi-Yau fillings (i.e.\ a strong filling $W$ such that $c_1(W)$ is torsion) due to the fact that $1$ is a unit in $QH^*(W;\Lambda)$, which is $H^*(W;\Lambda)$ as a group.
\begin{proposition}\label{prop:kill2}
	Let $W$ be a Calabi-Yau filling and if $1$ is in the image of $SH^{*,\le a, \le b}_+(W;\Lambda) \to QH^{*+1}(W) $ is zero. Then $SH^*(W;\Lambda)=0$ and $SH^{*,\le a, \le b}_+(W;\Lambda)\to QH^{*+1}(W;\Lambda)$ is surjective.
\end{proposition}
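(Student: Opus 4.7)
The plan is to mirror the proof of Proposition~\ref{prop:kill}, with the cohomology ring $H^*(W)$ replaced throughout by the quantum cohomology $QH^*(W;\Lambda)$ with Novikov coefficients. The Calabi-Yau assumption ensures, as noted in the introduction, that all potential quantum corrections to the product $1\cdot\alpha$ vanish by degree reasons, so that $1$ remains a genuine unit in $QH^*(W;\Lambda)$. This is the only new structural input beyond what is already available in the symplectically aspherical case.

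First I would assemble the tautological filtered long exact sequence
$$\cdots \to QH^*(W;\Lambda) \xrightarrow{\iota} SH^{*,\le a, \le b}(W;\Lambda) \to SH^{*,\le a, \le b}_+(W;\Lambda) \xrightarrow{\delta} QH^{*+1}(W;\Lambda) \to \cdots,$$
defined via the autonomous cascades model of \S\ref{ss:sympcoh} but now over $\Lambda$. The hypothesis provides an element $x \in SH^{*,\le a, \le b}_+(W;\Lambda)$ with $\delta(x) = 1$; exactness at $SH^{*+1,\le a,\le b}$ then gives $\iota(1) = 0$. Composing with the continuation to $SH^*(W;\Lambda)$, the image of $1$ is the unit $1_{SH}$ of the ring $SH^*(W;\Lambda)$, so $1_{SH} = 0$ and hence $SH^*(W;\Lambda) = 0$.

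For the surjectivity statement, I would equip $SH^{*,\le a, \le b}_+(W;\Lambda)$ with the structure of a module over $QH^*(W;\Lambda)$ using the pair-of-pants product (multiplication by a filtration-$(0,0)$ class $\alpha \in QH^*$ preserves the filtration $(a,b)$), and observe that $\delta$ is $QH^*(W;\Lambda)$-linear with respect to this action. Then for any $\alpha \in QH^{*+1}(W;\Lambda)$, the computation $\delta(\alpha \cdot x) = \alpha \cdot \delta(x) = \alpha \cdot 1 = \alpha$ yields the claim.

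The main obstacle I expect is not algebraic but analytic and bookkeeping-heavy: verifying that the filtered long exact sequence, the pair-of-pants module product, and the $QH^*$-linearity of the connecting map $\delta$ all extend cleanly to strong Calabi-Yau fillings with Novikov coefficients, where compactness of the relevant moduli spaces is governed by the asymptotic behaviour lemma of \cite{cieliebak2018symplectic} rather than by action estimates alone. Once these structural verifications are in place, the argument is formally identical to that of Proposition~\ref{prop:kill}, and the module-theoretic computation above finishes the proof in one line.
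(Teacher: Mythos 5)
Your proof is correct and takes essentially the same approach the paper implicitly relies on (via the reference in Proposition \ref{prop:kill} to the argument of Proposition 2.10 in \cite{zhou2020mathbb}): unitality of $1$ in $QH^*(W;\Lambda)$ combined with the $QH^*$-module structure on the filtered tautological long exact sequence. Your argument for surjectivity uses the module action on $SH^{*,\le a,\le b}_+(W;\Lambda)$ together with $QH^*$-linearity of $\delta$, whereas the paper's version would note that $\iota:QH^*(W;\Lambda)\to SH^{*,\le a,\le b}(W;\Lambda)$ is a module map with $\iota(1)=0$ and hence $\iota\equiv 0$, from which both conclusions follow by exactness — these are two faces of the same module-theoretic observation.
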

\begin{proposition}\label{prop:exact}
	Let $W$ be a Calabi-Yau filling of $Y$, then $W$ is symplectically aspherical.
\end{proposition}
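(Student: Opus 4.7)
The plan is to replay the argument of \S \ref{ss31} verbatim over the Novikov field $\Lambda$, substituting Proposition \ref{prop:kill2} for Proposition \ref{prop:kill}. All the geometric inputs from \S \ref{ss31}---the positive filtered symplectic cohomology computation (Propositions \ref{prop:positive}, \ref{prop:checkhat}), the continuation/filtration machinery (Propositions \ref{prop:nice}, \ref{prop:filtration}, \ref{prop:better}), and the homology-triviality and stretched-transversality results (Propositions \ref{prop:homology}, \ref{prop:stretched})---are proved via energy and contact-energy estimates near $Y_\epsilon$ together with neck-stretching, and none of their proofs uses exactness of the filling. I would therefore expect them to carry over verbatim to Calabi-Yau fillings once Floer theory is set up over $\Lambda$. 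The essential algebraic difference from the symplectically aspherical case is that the Calabi-Yau hypothesis $Nc_1(W)=0$ forces $c_1(A)=0$ for every $A\in H_2(W;\Z)$, which gives a genuine $\Z$-grading on Floer theory over $\Lambda$ (the Novikov parameter has degree $2c_1=0$) and makes $1\in QH^0(W;\Lambda)$ a unit by degree considerations alone---precisely the hypothesis of Proposition \ref{prop:kill2}.

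Carrying this out, I would first check that by Proposition \ref{prop:checkhat} over $\Lambda$ together with the direct computation of Proposition \ref{prop:product}, the class $1\in H^0(Y;\Lambda)$ lies in the image of the composition
\[
\Phi\colon SH^{*,\le 1+\delta,\le\epsilon}_+(V\times\D;\Lambda)\to SH^{*,\le 1+\epsilon+2\delta}_+(V\times\D;\Lambda)\to H^{*+1}(V\times\D;\Lambda)\to H^{*+1}(Y;\Lambda)
\]
for the standard filling. Propositions \ref{prop:filtration} and \ref{prop:stretched} then say that the associated graded of $\Phi$ is filling-independent; unlike the symplectically aspherical case, the $\Z$-grading available here forces the possible ``correction term'' in higher filtration to vanish, since the restriction $\Phi|_{SH^{-1,+}}$ lands in the degree-$0$ piece $H^0(Y;\Lambda)$, which carries trivial higher filtration by cohomological degree. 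Hence for any Calabi-Yau filling $W$, $1\in\Ima\Phi$; lifting along the isomorphism $i^*\colon H^0(W;\Lambda)\to H^0(Y;\Lambda)$ I conclude that $1\in H^0(W;\Lambda)$ maps to zero under $H^*(W;\Lambda)\to SH^*(W;\Lambda)$. Proposition \ref{prop:kill2} then yields $SH^*(W;\Lambda)=0$ and surjectivity of $\delta\colon SH^*_+(W;\Lambda)\to H^{*+1}(W;\Lambda)$, after which the remainder of the proof of Theorem \ref{thm:main}\eqref{t1} goes through to give injectivity of $H^*(W;\Lambda)\to H^*(Y;\Lambda)$.

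Finally, since $\Lambda$ is faithfully flat over $\Q$ and $\R$ is flat over $\Q$, injectivity in degree $2$ descends to injectivity of $H^2(W;\R)\to H^2(Y;\R)$. For any strong filling, $\omega|_Y=d\alpha$ with $\alpha$ the contact primitive on the boundary, so $[\omega]\in H^2(W;\R)$ maps to zero in $H^2(Y;\R)$ and hence vanishes; thus $W$ is exact and in particular symplectically aspherical. The hard part will be verifying that the virtual-dimension argument of Proposition \ref{prop:stretched} is genuinely unaffected by the (a priori nonzero) torsion of $c_1(W)$---the point being that Proposition \ref{prop:homology} controls the homology class of the middle Floer cylinder via contact energy alone in the $\epsilon\to 0$ limit, so the relevant relative Chern-number contribution vanishes just as in the aspherical case.
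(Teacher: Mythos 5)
Your overall strategy matches the paper's: show $1$ lies in the image of the composite $\Phi$, apply Proposition~\ref{prop:kill2} to kill $SH^*(W;\Lambda)$, deduce injectivity of $H^*(W;\Lambda)\to H^*(Y;\Lambda)$, and conclude that $\omega$ is exact. However, the paper takes a meaningfully shorter route than your proposed verbatim replay, and the worry you flag at the end of your proposal is precisely the place where the two approaches diverge. You propose to port Propositions~\ref{prop:nice}, \ref{prop:filtration}, \ref{prop:better}, \ref{prop:homology} over $\Lambda$ and then fret about whether the virtual-dimension count in Proposition~\ref{prop:stretched} survives a nonzero torsion $c_1(W)$. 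But the whole point of the Calabi-Yau hypothesis is that $\langle c_1(W), A\rangle = 0$ for every $A\in H_2(W;\Z)$, so the Conley--Zehnder index of any contractible orbit is independent of the capping disk and the relative Chern-number contribution to the Fredholm index is identically zero. Consequently there is a genuine $\Z$-grading on the cochain complex, and the index calculation of Proposition~\ref{prop:stretched} is $[u]$-independent \emph{a priori}---so Proposition~\ref{prop:homology} is never needed, and therefore neither is the contact-energy control that forced the introduction of $H_{1+\epsilon+2\delta}$ and $\widetilde{H}_{1+\epsilon+2\delta}$ in the aspherical case. The paper's proof simply runs the stretched moduli-space argument of Proposition~\ref{prop:stretched} directly on $K_{1+\delta}\oplus\epsilon(f-1)$, replacing your filtration bookkeeping with the honest grading $|p\otimes\check\gamma_0|=\ind(p)-1$, $|p\otimes\hat\gamma_0|=\ind(p)-2$, and $\mu_{CZ}(\gamma_p)=n+2-\ind(p)$.

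Two further caveats about the ``carries over verbatim'' claim. First, for non-exact strong fillings the definition of $SH_+^*$ is not justified by the action filtration (there is no global action functional in the usual form); as the paper notes, one must instead invoke the asymptotic behavior lemma of Cieliebak--Oancea, which is a different mechanism. Second, your sentence asserting that the $\Z$-grading ``makes $1\in QH^0(W;\Lambda)$ a unit by degree considerations alone'' conflates two facts: $1$ is automatically a unit in $QH^*(W;\Lambda)$ regardless of grading; the role of the $\Z$-grading is to force the correction term $A$ (which lives in $\oplus_{i>0}H^{2i}$ in the aspherical argument) to vanish, so that $1\in\Ima\Phi$ outright rather than merely $1+A\in\Ima\Phi$. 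Once you repair these points, the argument does close, but you would be better served recognizing up front that the $\Z$-grading renders the $\widetilde H$/filtration apparatus superfluous.
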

\begin{proof}
	In the Calabi-Yau case, we have a $\Z$ grading on symplectic cohomology. In particular,  we do not need Proposition \ref{prop:homology} to control the homology class and all generators of $C^*(K_{1+\delta}\oplus \epsilon(f-1))$ have a well-defined grading, since all of them are contractible. More explicitly, the grading is given by $|p\otimes \check{\gamma}_0|=\ind(p)-1, |p\otimes \hat{\gamma}_0|=\ind(p)-2$ and the Conley-Zehnder index of $\gamma_p$ is given by $n+2-\ind(p)$, in particular, the SFT degree $\mu_{CZ}(\gamma_p)+(n+1)-3$ is positive. There is also no need to use $H_{1+\epsilon+2\delta}$ or $\widetilde{H}_{1+\epsilon+2\delta}$. We can apply the same argument in Proposition \ref{prop:stretched} to $K_{1+\delta}\oplus \epsilon(f-1)$ directly, which shows that $\delta_{\partial}:H^*(C_+(K_{1+\delta}\oplus\epsilon(f-1)))\to H^*(Y;\Lambda)$ is independent of the Calabi-Yau filling. Then by Proposition \ref{prop:kill2}, we have $H^*(W;\Lambda)\to H^*(Y;\Lambda)$ is isomorphic to $H^*(V\times \D;\Lambda) \to H^*(Y;\Lambda)$ similar to the symplectically aspherical case. In particular, we have that $H^*(W;\Q)\to H^*(Y;\Q)$ is injective. As a consequence, $\omega$ is an exact form, i.e.\ $W$ is symplectically aspherical. 
\end{proof}
\begin{proof}[Proof of Theorem \ref{thm:main} for the Calabi-Yau case]
	It follows from Proposition \ref{prop:exact} and the symplectically aspherical case of Theorem \ref{thm:main}. 
\end{proof}

\begin{remark}
	When $c_1(V)=0$, it was shown in \cite{zhou2019symplectic} that $\partial (V\times \D)$ is asymptotically dynamically convex. For Calabi-Yau fillings of $\partial(V\times \D)$,  the index neck-stretching argument in \cite{zhou2019symplectic} requires that $\partial V$ has a Reeb dynamics with Conley-Zehnder indices bounded from below. But the index neck-stretching argument is still applicable if we attach additional flexible handles, while the action neck-stretching in this paper breaks down.
\end{remark}

\subsection{Homology cobordism}
A cobordism $W$ from $\partial_0W$ to $\partial_1W$ is called a homology cobordism iff $\partial_0W \to W$ and $\partial_1 W \to W$ both induce isomorphism on homology.
\begin{proposition}\label{prop:homologycob}
	Under the assumption in Theorem \ref{thm:main}, the filling $W$ can be obtained from $V\times \D$ by attaching a homology cobordism from $Y$ to $Y$.
\end{proposition}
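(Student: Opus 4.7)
The strategy is to embed $V\times\D$ symplectically into $W$, take the complementary cobordism $C$, and verify it is a homology cobordism using Theorem~\ref{thm:main}\eqref{t1} together with Poincar\'e--Lefschetz duality. Since $W$ is a Liouville filling by Theorem~\ref{thm:main}\eqref{t1}, the cylindrical ends of the completions $\widehat W$ and $\widehat{V\times\D}$ can be canonically identified. Combining this identification with the Liouville flow (working in the completion $\widehat W$ before rescaling back into $W$), I would produce a symplectic embedding $\iota\colon V\times\D\hookrightarrow W$ arranged so that the contact hypersurface $Y_{\rm in}:=\iota(\partial(V\times\D))$ is contact-isotopic inside $W$ to $Y_{\rm out}:=\partial W$ via the Liouville collar flow. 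Setting $C:=\overline{W\setminus \iota(V\times\D)}$ yields a compact smooth cobordism with $\partial C=Y_{\rm in}\sqcup Y_{\rm out}$.

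The main step is to show that $\iota^*\colon H^*(W)\to H^*(V\times\D)$ is an isomorphism. The composition $H^*(W)\xrightarrow{\iota^*}H^*(V\times\D)\xrightarrow{j^*_{V\times\D}}H^*(Y_{\rm in})$ equals the restriction $H^*(W)\to H^*(Y_{\rm in})$, and via the isotopy identification $H^*(Y_{\rm in})\cong H^*(Y_{\rm out})=H^*(Y)$ this is the standard boundary restriction $H^*(W)\to H^*(Y)$. By Theorem~\ref{thm:main}\eqref{t1}, both this map and the analogous $H^*(V\times\D)\to H^*(Y)$ are injective with the same image. Injectivity of the composite forces $\iota^*$ injective; and equality of images combined with injectivity of $j^*_{V\times\D}$ forces $\mathrm{image}(\iota^*)=H^*(V\times\D)$, so $\iota^*$ is surjective as well.

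Finally, $\iota^*$ being an isomorphism combined with the long exact sequence of the pair $(W,V\times\D)$ yields $H^*(W,V\times\D)=0$, which by excision equals $H^*(C,Y_{\rm in})$; thus $Y_{\rm in}\hookrightarrow C$ is a cohomology isomorphism. Poincar\'e--Lefschetz duality for the cobordism $C$ interchanges its two boundary components, giving $H^*(C,Y_{\rm out})=0$ as well, so $Y_{\rm out}\hookrightarrow C$ is also a cohomology isomorphism; universal coefficients then upgrades this to homology isomorphisms on both sides, making $C$ a homology cobordism from $Y$ to $Y$. The hard part is the embedding step: producing $\iota$ with $Y_{\rm in}$ contact-isotopic to $Y_{\rm out}$ requires a careful Liouville-flow argument in $\widehat W$ to reconcile the differing topologies of the fillings, while the rest is routine algebraic topology given Theorem~\ref{thm:main}\eqref{t1}.
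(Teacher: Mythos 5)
Your overall strategy — embed a copy of $V\times\D$ into $W$, take the complementary cobordism, and run the excision/Lefschetz-duality/UCT argument powered by Theorem~\ref{thm:main}\eqref{t1} — is exactly what the paper does, and the algebraic-topology part of your argument is essentially identical (the paper goes $H^*(X,\partial W_0)=0\Rightarrow H_*(X,\partial W)=0$ by Lefschetz duality, then $\Rightarrow H^*(X,\partial W)=0$ by UCT, then $\Rightarrow H_*(X,\partial W_0)=0$ by Lefschetz duality again; your phrasing is slightly garbled but the same chain).

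However, the embedding step, which you yourself flag as ``the hard part,'' has a genuine gap, and the construction you sketch does not work. There is no reason the full-size $V\times\D$ should embed symplectically into an arbitrary symplectically aspherical filling $W$, and the ``Liouville-flow argument'' cannot produce such an embedding: the Liouville flow of $W$ contracts toward the \emph{skeleton of $W$}, which is not a copy of $V\times\D$, while the Liouville flow of $V\times\D$ contracts toward the skeleton of $V\times\D$, which lives in a part of $\widehat{V\times\D}$ that $\widehat W$ does not share (the completions only agree on the cylindrical end $Y\times[1,\infty)$). Likewise, demanding that $Y_{\rm in}$ be \emph{contact}-isotopic to $\partial W$ is both unnecessary and not something this construction can deliver. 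The paper's fix is simpler and entirely topological: it takes $W_0:=V\times\D(\epsilon)$ — same $V$, only the disk factor shrunk — placed inside the boundary collar $Y\times(1-3\epsilon,1)$ of $V\times\D$. Since \emph{every} filling of $Y$ contains this same collar, $W_0$ embeds (smoothly, not symplectically) into any filling $W$, and that is all the argument needs: $H^*(W)\to H^*(W_0)$ factors through $H^*(Y)$, so Theorem~\ref{thm:main}\eqref{t1} makes it an isomorphism, and the rest is excision plus duality. So you had the right skeleton of the proof but not the key observation that a small topological copy in the universal boundary collar suffices.
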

\begin{proof}
	Let $W_0$ be a copy of $V\times \D(\epsilon) \subset V\times \D$ placed near $Y=\partial (V\times \D)$ for $\epsilon$ small. Then for any symplectically aspherical/Calabi-Yau filling $W$, we can assume $W_0$ is also contained in $W$. Let $X$ denote the cobordism from $\partial W_0$ to $\partial W$. We can assume $W_0$ is inside the strip $Y \times (1-3\epsilon,1)$ near the boundary. In particular, $H^*(Y \times (1-3\epsilon,1)) \to H^*(W_0)$ is an isomorphism when restricted to the image of $H^*(V\times \D )\hookrightarrow H^*(Y)$. Since $H^*(W)\to H^*(Y)$ is independent of filling, we have that $H^*(W)\to H^*(W_0)$ is an isomorphism. Therefore $H^*(X,\partial W_0)=0$ by excision. By Lefschetz duality and the universal coefficient theorem, we have that $H_*(X,\partial W_0)$ and $H_*(X,\partial W)$ are both zero. Hence $X$ is a homology cobordism.
\end{proof}

\subsection{General strong fillings}
The obstruction of applying Proposition \ref{prop:kill} for general strong fillings is that we may have a zero divisor $1+A$ in $QH^*(W;\Lambda)$ for $A\in \oplus_{i>0}H^{2i}(W;\Lambda)$, since $(1+A)\cup \cdot :QH^*(W;\Lambda)\to QH^*(W;\Lambda)$ is a linear map between finite dimensional $\Lambda$-spaces. For general strong fillings, symplectic cohomology and positive symplectic cohomology can be defined as usual if one applies a suitable virtual technique to overcome the transversality issue. For simplicity, we assume the strong filling is semi-positive as in \cite[Definition 6.4.1]{mcduff2012j}, so that the theory can be defined using generic almost complex structures. 

\begin{proposition}\label{prop:sphere}
	Assume $W^{2n+2}$ is a strong (semi-positive) filling, such that there is no embedded symplectic sphere $S$, with $2-n\le c_1(S) \le 2n-1$, then there is no zero divisor of $QH^*(W;\Lambda)$ in the form of $1+A$ for $A\in \oplus_{i>0}H^{2i}(W;\Lambda)$.
\end{proposition}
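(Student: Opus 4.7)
The plan is to reduce Proposition~\ref{prop:sphere} to the statement that $A$ is nilpotent in the ring $(QH^{*}(W;\Lambda),\cup_{q})$. Once nilpotent with $A^{N}=0$, the element $1+A$ has the finite two-sided inverse $\sum_{j=0}^{N-1}(-A)^{j}$ and is therefore a unit, hence in particular not a zero divisor.

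To establish nilpotency I would filter $QH^{*}(W;\Lambda)\cong H^{*}(W)\otimes_{\Q}\Lambda$ by classical cohomological degree, setting $F^{k}:=H^{\ge k}(W)\otimes_{\Q}\Lambda$. Since $W$ is a compact $(2n+2)$-manifold with nonempty boundary, $H^{\ge 2n+2}(W)=0$, so $F^{2n+2}=0$; the hypothesis places $A\in F^{2}$. The key claim is that the quantum product respects this filtration, $F^{k}\cup_{q}F^{l}\subset F^{k+l}$; granting this, $A^{n+1}\in F^{2(n+1)}=0$.

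For the filtration claim, decompose $\alpha\cup_{q}\beta=\alpha\cup\beta+\sum_{B}(\alpha\cup_{q}\beta)_{B}\,t^{\omega(B)}$, summed over spherical classes $B$ with $\omega(B)>0$. The classical cup product preserves degree, so only the quantum corrections are at issue. For $\alpha\in H^{a}$ and $\beta\in H^{b}$, the summand $(\alpha\cup_{q}\beta)_{B}$ lies in $H^{a+b-2c_{1}(B)}(W)\otimes\Lambda$, so it escapes $F^{a+b}$ only when $c_{1}(B)\ge 1$. Nonvanishing of $(\alpha\cup_{q}\beta)_{B}$ in turn requires that the three-point Gromov--Witten invariant $GW_{B}(\alpha,\beta,\gamma)$ be nonzero for some $\gamma$ of complementary degree, i.e.\ $\deg\alpha+\deg\beta+\deg\gamma=\dim\cM_{0,3}(W,B)=2n+2+2c_{1}(B)$. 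Because $\alpha,\beta\in F^{2}$ gives $a,b\ge 2$ and $H^{>2n}(W)=0$ gives $\deg\gamma\le 2n$, combining these with $c_{1}(B)\ge 1$ forces $c_{1}(B)\in[1,2n-1]\subset[2-n,2n-1]$.

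The final step is to show that $GW_{B}=0$ whenever $c_{1}(B)$ lies in this range, and this is where the hypothesis enters. By semi-positivity, three-point invariants on $W$ can be computed without virtual perturbation for generic compatible $J$; the strata of multiply covered and nodal configurations have real codimension at least $2$ and do not contribute. Consequently a nonzero $GW_{B}$ produces a somewhere injective $J$-holomorphic sphere in class $B$ in the interior of $W$ (containment in the interior is guaranteed by cylindrically convex $J$ near $\partial W$), which is embedded for generic $J$ by automatic transversality in real dimension $\ge 6$, and by positivity of intersections/adjunction when $n=1$. This yields an embedded symplectic sphere $S\subset W$ with $c_{1}(S)=c_{1}(B)\in[2-n,2n-1]$, contradicting the hypothesis. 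Hence all relevant quantum corrections vanish, $A$ is nilpotent, and $1+A$ is a unit. The main technical obstacle I anticipate is this semi-positive transversality and embeddedness step, which requires the standard arguments of McDuff--Salamon carefully adapted to the bordered setting, together with confirming that the Poincar\'e--Lefschetz pairing encoding $(\alpha\cup_{q}\beta)_{B}$ via $GW_{B}(\alpha,\beta,\cdot)$ is consistent on the filling $W$.
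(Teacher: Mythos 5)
Your argument is in the same spirit as the paper's: both reduce the problem to a dimension count for three-point Gromov--Witten invariants to force a symplectic sphere with $c_1$ in a prescribed range. You frame the goal as nilpotency of $A$ whereas the paper argues directly on a hypothetical zero divisor $(1+A)\cup (b+B)=0$; in this finite-dimensional $\Lambda$-algebra these are equivalent (``not a zero divisor'' for the multiplication endomorphism $(1+A)\cup\,\cdot\,$ is the same as ``unit''). However there is a genuine gap in your degree bound.

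You assert $\deg\gamma\le 2n$ on the grounds that ``$H^{>2n}(W)=0$''. This is not justified for a general strong filling $W$, and more to the point, it is aimed at the wrong group. In the bordered setting the output $\alpha\cup_q\beta$ lives in $H^*(W)$, and extracting its coefficient against a basis element requires pairing with the Lefschetz dual in $H^*(W,\partial W)$. So the third insertion $\gamma$ naturally lives in $H^*(W,\partial W)$, where $H^{2n+2}(W,\partial W)\ne 0$ is precisely the class of a point. The case $\deg\gamma=2n+2$ corresponds to the $H^0$-component of the quantum product and yields, via your dimension formula with $a=b=2n$, $c_1(B)=2n$, which the hypothesis does \emph{not} forbid (it only excludes $c_1\le 2n-1$). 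So without an extra argument your filtration claim $F^k\cup_q F^l\subset F^{k+l}$ is not established and the nilpotency conclusion does not follow.

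The missing step is exactly what the paper supplies: the $H^0$-coefficient of a quantum product of two positive-degree classes is computed by counting curves through a point constraint, and this count vanishes because the point can be moved to a collar near $\partial W$ where the (cylindrically convex) almost complex structure and the integrated maximum principle prevent nonconstant holomorphic spheres from reaching it. In the paper this is the observation $\langle C\cup D,1\rangle=0$; it is used both to force $b=0$ in the hypothetical relation $(1+A)\cup(b+B)=0$ and to guarantee that all three degrees in the relevant GW count lie in $[2,2n]$, giving $c_1(B)\in[2-n,2n-1]$. If you insert that maximum-principle vanishing to rule out $\deg\gamma=2n+2$ (the other odd/even parities are automatic), your argument closes up and is essentially the paper's. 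The transversality and embeddedness discussion at the end of your proposal matches the paper's treatment of the semi-positive and $n=1$ cases.
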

\begin{proof}
	If there is a such zero divisor, we claim there exists $B\in \oplus_{i>0} H^{2i}(W;\Lambda)$ such that $(1+A)\cup B=0$. First of all, there is a $\Z/2$ grading, hence there exist $B\in \oplus_{i>0} H^{2i}(W;\Lambda)$ and $b\in \Q$ such that $(1+A)\cup (b+B)=0$.  Note that $\langle C\cup D, 1\rangle=0$ for any $C,D\in \oplus_{i>0} H^{2i}(W;\Lambda)$, since the corresponding moduli space counts curves with a point constraint. However, such a moduli space must be empty as we can choose the point constraint near the boundary, and the maximal principle will obstruct such curve. Since $0=(1+A)\cup (b+B)=b+bA+B+A\cup B$, we must have $b=0$, i.e.\ the claim holds.
	Then to have $(1+A)\cup B=0$, the quantum product $\oplus_{i>0} H^{2i}(W;\Lambda)\otimes \oplus_{i>0} H^{2i}(W;\Lambda) \to \oplus_{i>0} H^{2i}(W;\Lambda)$ must be deformed, hence there must be some holomorphic sphere (possibly nodal) $S$, such that $6\le 2c_1(S)+2n+2 \le 6n$, i.e.\ $2-n\le c_1(S)\le 2n-1$. When $n\ge 2$, a (nodal) holomorphic sphere can be perturbed into an embedded symplectic sphere with the same first Chern class, hence a contradiction. When $n=1$, it is always semi-positive, and the curve contributing to the deformation of the product is necessarily somewhere injective by \cite[\S 6.6]{mcduff2012j}, hence the curve can be assumed to an embedded symplectic sphere, which is a contradiction. 
\end{proof}

\begin{proof}[Proof of Corollary \ref{cor:strong}]
	The proof follows from the same argument for Theorem \ref{thm:main}. Although we do not have a well-defined symplectic action for strong fillings, but the continuation maps used in the proof of Theorem \ref{thm:main} can be described by moduli spaces contained outside the boundary by neck-stretching, where the symplectic action is well-defined and can be used to restrict Floer trajectories. Then by Proposition \ref{prop:sphere}, we still have $SH^*(W)=0$ and $H^*(W;\Lambda)\to H^*(Y;\Lambda)$ is always injective. Hence the symplectic form is exact.
\end{proof}

\section{$h$-cobordisms}
In this section, we will upgrade the homology cobordism $X$ in Proposition \ref{prop:homologycob} to an $h$-cobordism assuming $\pi_1(Y)$ is abelian. Unlike the cohomology information, we can not quite get the full information on the fundamental group or more generally higher homotopy groups. But in the case of $\pi_1(Y)$ abelian, we do have enough ingredients to get some information on $\pi_1$ and conclude an $h$-cobordism.  
\subsection{Symplectic cohomology of covering spaces}

Recall from \cite[\S 3.3]{zhou2019symplectic}, for every covering space $\widetilde{W}\to W$, we can define the symplectic cohomology of the covering space. A cochain is a sum of formal sums of different lifts of periodic orbits on $W$. The differential is defined by lifting the differential on $W$ according to the (unique) parallel transportation. In particular, we have the following commutative long exact sequences,
\begin{equation}\label{eqn:dia}
\xymatrix{
\ldots \ar[r] & H^*(W) \ar[r]\ar[d] & SH^*(W) \ar[r]\ar[d] & SH^*_+(W)\ar[r]\ar[d] & H^{*+1}(W) \ar[r]\ar[d] & \ldots\\
\ldots \ar[r] & H^*(\widetilde{W}) \ar[r] & SH^*(\widetilde{W}) \ar[r] & SH^*_+(\widetilde{W})\ar[r] & H^{*+1}(\widetilde{W}) \ar[r] & \ldots
}
\end{equation}
Similarly for the filtered version. Note that we use only contractible orbits to define $SH^*(W)$. A map from the thrice punctured sphere (i.e.\ a pair of pants) can be completed (as a topological map) to a map from sphere, as all asymptotics are contractible orbits. As a consequence, we can lift the map to the universal cover and gives $SH^*(\widetilde{W})$ a unital ring structure. Then the first square in \eqref{eqn:dia} is a commutative square of unital rings.

\begin{remark}
	One can also define a symplectic cohomology with local coefficients, i.e.\ the underlying cochain complex is the free $\Z[\pi_1]$ module generated by periodic orbits. The differential is again the lifting of the ordinary differential and it respects the $\Z[\pi_1]$ module structure. The corresponding cohomology is the same as the symplectic cohomology of the universal cover if $\pi_1$ is finite. If $\pi_1$ is infinite, the symplectic cohomology of the universal cover allows generators which can be viewed as an infinite sum in the group ring. On the regular cohomology level, i.e.\ the Morse theory level, the cohomology with local coefficient $H^*(W;\Z[\pi_1])$ is the compactly supported cohomology of the universal cover $H_c^*(\widetilde{W})\ne H^*(\widetilde{W})$, see \cite[Proposition 3H.5]{hatcher2002algebraic}. It still carries a product structure (the pair of pants construction holds), but it is not unital ($1\in H^*(\widetilde{W})$ is represented by an infinite sum in the group ring).
\end{remark}

\begin{proposition}\label{prop:fundamental}
		If $\pi_1(Y)$ is abelian, then we have $\pi_1(V)$ is abelian  and $\pi_1(Y)\to \pi_1(V\times \D)$ is an isomorphism. Then for any exact/symplectically aspherical  filling $W$, we have $\pi_1(Y) \to \pi_1(W)$ is an isomorphism. Moreover, $H^*(\widetilde{W}) \to H^*(\widetilde{Y})$ is injective and independent of fillings for the universal covers.
\end{proposition}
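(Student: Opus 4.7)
$V\times\D$ deformation retracts onto $V$, so $\pi_1(V\times\D)\cong\pi_1(V)$. To compute $\pi_1(Y)$, decompose $Y=(\partial V\times\D)\cup(V\times S^1)$ overlapping in $\partial V\times S^1$ and apply Seifert--van Kampen. The $\Z$-factor of $\pi_1(\partial V\times S^1)=\pi_1(\partial V)\times\Z$ (generated by the $\partial\D$ loop) becomes trivial in $\pi_1(\partial V\times\D)=\pi_1(\partial V)$ but equals the $S^1$-generator of $\pi_1(V\times S^1)=\pi_1(V)\times\Z$, while the two copies of $\pi_1(\partial V)$ are identified via the natural map $\pi_1(\partial V)\to\pi_1(V)$. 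The pushout is therefore $\pi_1(V)$, so $\pi_1(Y)\cong\pi_1(V)\cong\pi_1(V\times\D)$, and $\pi_1(V)$ is abelian whenever $\pi_1(Y)$ is.

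\textbf{(ii) Injectivity of $\pi_1(Y)\to\pi_1(W)$.} Theorem~\ref{thm:main}(1) makes $H^*(W)\to H^*(Y)$ injective with filling-independent image. For the model filling $V\times\D$ the induced $H_1(Y)\to H_1(V\times\D)$ is an isomorphism by (i); combining filling-independence with the universal coefficient theorem in degree $1$ (applied together with Theorem~\ref{thm:main}(1) in degree $2$ to handle torsion) gives $H_1(W)\xrightarrow{\sim}H_1(Y)$ for every exact/symplectically aspherical $W$. Since $\pi_1(Y)$ is abelian, $H_1(Y)=\pi_1(Y)$, and the composition $\pi_1(Y)\to\pi_1(W)\twoheadrightarrow H_1(W)\cong\pi_1(Y)$ is the identity (by naturality of the Hurewicz map), forcing $\pi_1(Y)\hookrightarrow\pi_1(W)$.

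\textbf{(iii) Surjectivity and the cover statement.} The plan is to rerun the proof of Theorem~\ref{thm:main} on the universal cover $\widetilde W$ of $W$, using the covering-space symplectic cohomology reviewed above (cf.\ \cite[\S3.3]{zhou2019symplectic}) and the commutative diagram~\eqref{eqn:dia} of long exact sequences. By (ii), the preimage of $Y$ in $\widetilde W$ decomposes into $[\pi_1(W):\pi_1(Y)]$ copies of the universal cover $\widetilde Y$ of $Y$, each a contact hypersurface whose lifted Reeb dynamics agree with those on $Y$. Every Hamiltonian/Reeb orbit appearing in the complexes of \S\ref{ss:contact}--\S\ref{ss:sympcoh} is contractible and sits in a collar of a single copy of $Y$, so every rigid Floer cylinder, cascade, continuation solution for \eqref{eqn:continuation}, and boundary curve defining $\delta_\partial$ admits a canonical lift to $\widetilde W$ once a lift of its positive asymptote is chosen. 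Propositions \ref{prop:nice}--\ref{prop:stretched} and the vanishing argument of \S\ref{ss31} then transfer verbatim, yielding $SH^*(\widetilde W)=0$ together with the injectivity and filling-independence of $H^*(\widetilde W)\to H^*(\widetilde Y)$, which is the last claim of the proposition. Evaluating this in degree $0$: the map $\Z=H^0(\widetilde W)\to H^0$ of the full preimage of $Y$, which is $\Z^{[\pi_1(W):\pi_1(Y)]}$, lands diagonally; for $V\times\D$ the index is $1$, so by filling-independence the index is $1$ for every $W$, i.e.\ $\pi_1(Y)\to\pi_1(W)$ is also surjective, proving the isomorphism.

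\textbf{Main obstacle.} The principal difficulty is transferring the stretched-neck analysis of \S\ref{s3}---in particular the filtration comparison in Proposition~\ref{prop:nice}, the stretched-neck argument in Proposition~\ref{prop:stretched}, and the continuation map \eqref{eqn:continuation}---to the (possibly infinite, a priori nonabelian) cover $\widetilde W$. Two points need care: (i) the cochain complex on $\widetilde W$ is a completion of the cochain complex on $W$ allowing formal $\pi_1(W)$-sums, and one must verify that each differential involves only finitely many lifts, which follows from compactness of moduli spaces together with the action/Morse-index filtration $F_k$; and (ii) one must rule out lifted curves that wrap a nontrivial element of $\pi_1(W)$, which is enforced by contractibility of the asymptotes and the action inequalities \eqref{eqn:positive}/\eqref{eqn:positive2} confining lifted curves to a compact collar of a single copy of $Y$ in $\widetilde W$. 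Once these are settled, the ``check-orbit'' unit element producing $1+A$ in the proof of Theorem~\ref{thm:main} lifts to $\widetilde W$, and yields both the cover statement and the surjectivity used above.
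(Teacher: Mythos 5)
Your part (i) computes $\pi_1(Y)$ by Seifert--van Kampen on the decomposition $Y=(\partial V\times\D)\cup_{\partial V\times S^1}(V\times S^1)$, which correctly yields $\pi_1(Y)\cong\pi_1(V)\cong\pi_1(V\times\D)$. The paper instead observes that $V\times\D$ is built from handles of co-index $\ge 3$, so $\pi_1(Y)\to\pi_1(V\times\D)$ is an isomorphism. Both are correct; your route is more explicit, the paper's is shorter. Part (ii) is the paper's argument, and it is correct: Theorem~\ref{thm:main}\eqref{t1} plus universal coefficients in degrees $1$ and $2$ gives $H_1(Y)\xrightarrow{\sim}H_1(W)$, and the Hurewicz map then forces injectivity of $\pi_1(Y)\to\pi_1(W)$ because $\pi_1(Y)$ is abelian.

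Part (iii) has a genuine gap, and your logical order is the reverse of the paper's. The paper first proves surjectivity of $\pi_1(Y)\to\pi_1(W)$ by citing the argument of \cite[Theorem 3.16]{zhou2019symplectic} applied to the covering-space symplectic cohomology (with the $\Z$-grading there replaced by the filtration $F_k$), and only then declares the filling-independence of $H^*(\widetilde W)\to H^*(\widetilde Y)$ by rerunning Theorem~\ref{thm:main}'s proof on the cover. You instead assert the filling-independence of $H^*(\widetilde W)\to H^*(\widetilde Y)$ first (for one component $\widetilde Y$ of $\pi^{-1}(Y)$) and then attempt to extract surjectivity of $\pi_1$ by ``evaluating in degree $0$.'' That last step does not work. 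The observation that $\Z=H^0(\widetilde W)\to H^0(\pi^{-1}(Y))=\prod\Z$ ``lands diagonally'' is true for any connected cover of any pair and says nothing about the number of components of $\pi^{-1}(Y)$; the fact that the restriction to one fixed $\widetilde Y_0\subset\pi^{-1}(Y)$ is filling-independent is compatible with $\pi^{-1}(Y)$ having arbitrarily many (or infinitely many) components, because the covering cochain complex allows formal sums of lifts and the image of the unit is the constant function on all of $\pi^{-1}(Y)$. So ``by filling-independence the index is $1$'' is not a valid deduction; you would need a substantively different argument — the one the paper delegates to \cite[Theorem 3.16]{zhou2019symplectic} — comparing the covering $SH$ of $W$ with duality/Morse-theoretic information on $\widetilde W$ and the full $\pi^{-1}(Y)$, together with the filtration standing in for the grading. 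Relatedly, asserting ``Propositions \ref{prop:nice}--\ref{prop:stretched}\ldots transfer verbatim, yielding $SH^*(\widetilde W)=0$ together with injectivity and filling-independence of $H^*(\widetilde W)\to H^*(\widetilde Y)$'' before knowing $\pi_1(W)$ hides the fact that the meaning of $\widetilde Y$ as a subset of $\partial\widetilde W$ (one component versus the full preimage) depends on $\pi_1(W)$, and the analogue of Proposition \ref{prop:kill} on the cover needs care when $\widetilde W$ is noncompact. Your ``Main obstacle'' paragraph correctly identifies several of the technical points that must be handled, but the surjectivity deduction itself is the real gap.
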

\begin{proof}
	Since $V\times \D$ can be built from handles with indices at most $2n-1$ (i.e.\ co-indices at least $3$), we have $\pi_1(Y)\to \pi_1(V\times \D)$ is an isomorphism. In particular, $H_1(Y)\to H_1(V\times \D)$ is an isomorphism. Then by universal coefficient theorem, we have $H^1(V\times \D) \to H^1(Y)$ and $\tor H^2(V\times \D) \to \tor H^2(Y)$ are isomorphisms.  As a consequence, \eqref{t1} of Theorem \ref{thm:main} implies that $H_1(Y)\to H_1(W)$ is an isomorphism. Therefore $\pi_1(Y)\to \pi_1(W)$ is at least injective.  Then $\pi_1(Y)\to \pi_1(W)$ is surjective by the same argument in \cite[Theorem 3.16]{zhou2019symplectic} by considering the symplectic cohomology of the universal cover. The only difference is replacing the grading in \cite{zhou2019symplectic}  with  the associated graded group from the filtration. Hence $\pi_1(Y)\to \pi_1(W)$ is an isomorphism. The independence of $H^*(\widetilde{W})\to H^*(\widetilde{Y})$ then follows from the same proof of Theorem \ref{thm:main}.
\end{proof}

\begin{lemma}\label{lemma:ext}
	For a $\Z$-module $A$, if $\Hom(A,\Z)=\Ext(A,\Z)=0$, then $A=0$.\footnote{This is from Eric Wofsey's solution to \href{https://math.stackexchange.com/questions/1734222/does-trivial-cohomology-imply-trivial-homology-does-operatornamehoma-math}{https://math.stackexchange.com/questions/1734222/does-trivial-cohomology-imply-trivial-homology-does-operatornamehoma-math}}
\end{lemma}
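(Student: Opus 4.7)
The plan is to extract two structural consequences of the hypotheses—torsion-freeness and divisibility—and then contradict the $\Ext$ vanishing using the classical fact that $\Ext(\Q,\Z)\neq 0$. The reduction to the torsion-free case goes through any subgroup: for $B\subseteq A$, the short exact sequence $0\to B\to A\to A/B\to 0$ yields, using that abelian groups have projective dimension at most one,
\[
\Ext(A,\Z)\twoheadrightarrow \Ext(B,\Z),
\]
so $\Ext(B,\Z)=0$ for every subgroup $B$. Taking $B=\Z/n$ generated by a torsion element of order $n$ gives $\Z/n\cong \Ext(\Z/n,\Z)=0$, hence $n=1$; therefore $A$ is torsion-free.

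Next I would show that $A$ is divisible. Applying $\Hom(A,-)$ to $0\to \Z\xrightarrow{\cdot n}\Z\to \Z/n\to 0$ and invoking both hypotheses gives $\Hom(A,\Z/n)=0$ for every $n\ge 1$. But $\Hom(A,\Z/n)=\Hom_{\Z/n}(A/nA,\Z/n)$, and $\Z/n$ is self-injective as a $\Z/n$-module, so any nonzero $\bar x\in A/nA$ would let us extend the obvious nonzero map $\langle\bar x\rangle\to \Z/n$ to a nonzero element of $\Hom_{\Z/n}(A/nA,\Z/n)$, a contradiction. Hence $A=nA$ for every $n$, i.e.\ $A$ is divisible.

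A torsion-free divisible abelian group is canonically a $\Q$-vector space (existence of $\tfrac{1}{n}a$ uses divisibility, uniqueness uses torsion-freeness). If $A\neq 0$, pick any nonzero vector and obtain a $\Q$-linear embedding $\Q\hookrightarrow A$; the $\Ext$ long exact sequence then gives $\Ext(A,\Z)\twoheadrightarrow \Ext(\Q,\Z)$, forcing $\Ext(\Q,\Z)=0$. This contradicts the classical non-vanishing
\[
\Ext(\Q,\Z)\cong \widehat{\Z}/\Z\neq 0,
\]
which one reads off from the $\Hom(-,\Z)$ long exact sequence of $0\to \Z\to \Q\to \Q/\Z\to 0$ together with $\Ext(\Q/\Z,\Z)\cong \widehat{\Z}$. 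Hence $A=0$.

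The only delicate point is the divisibility step, which relies on the self-injectivity of $\Z/n$ over itself; everything else reduces to two applications of the $\Hom$–$\Ext$ long exact sequence plus the single nontrivial input $\Ext(\Q,\Z)\neq 0$.
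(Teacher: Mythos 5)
Your proof is correct and follows the same three-step outline as the paper: first use $\Ext^2(\cdot,\Z)=0$ to deduce torsion-freeness, then show divisibility, then invoke $\Ext(\Q,\Z)\ne 0$ to kill the resulting $\Q$-vector space. The only variation is in the divisibility step: the paper applies $\Hom(-,\Z)$ to $0\to A\xrightarrow{p}A\to A/pA\to 0$ and uses that $A/pA$ is an $\F_p$-vector space, whereas you apply $\Hom(A,-)$ to $0\to\Z\xrightarrow{n}\Z\to\Z/n\to 0$ to get $\Hom(A,\Z/n)=0$ and then use self-injectivity of $\Z/n$; both are valid and roughly equally direct (restricting to $n=p$ prime would let you bypass the self-injectivity input).
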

\begin{proof}
	Since $\Ext(\cdot,\Z)$ turns injective maps into surjective maps by $\Ext^2(\cdot,\Z)=0$, we have $\Ext(B,\Z)=0$ for any $B\subset A$. Therefore any finitely generated subgroup of $A$ is free. Hence $A$ is torsion free. Next we fix a prime $p$. Since $A$ is torsion free, we have short exact sequence $0\to A \stackrel{p\times }{\to} A \to A/pA \to 0$, which induces exact sequence
	$$\Hom(A,\Z)\to \Ext(A/pA, \Z) \to \Ext(A,\Z).$$
	Hence $\Ext(A/pA,\Z)=0$. But $A/pA$ is a direct sum of copies of $\Z/p$. Therefore $A/pA=0$ for any $p$. Then $A$ is a divisible torsion free group, hence a $\Q$-vector space. Since $\Ext(\Q,\Z)\ne 0$, we have $A=0$.
\end{proof}

We also need the following form of the universal coefficient theorem.
\begin{lemma}\label{lemma:univ}
Let $R$ be a ring and  $(C_*,\partial)$ be a cochain complex of $R$ modules such that $H_*(C_*)=0$, then $H^*(\hom_R(C_*,R))=0$. 
\end{lemma}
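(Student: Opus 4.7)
The plan is to reduce the lemma to the standard fact that an acyclic complex of projective (in particular free) $R$-modules is chain contractible, and that chain contractions are preserved by every additive functor, in particular by $\hom_R(-,R)$. In every intended application in this paper the complex $C_*$ arises as a Morse or Floer chain complex with $R$-basis indexed by critical points or periodic orbits, so this projectivity hypothesis is automatic; I would write the proof under that assumption and flag that it is really needed.

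Concretely, set $Z_n=\ker\partial_n$ and $B_n=\operatorname{im}\partial_{n+1}$. Acyclicity gives $Z_n=B_n$, and hence a short exact sequence
\[
0 \to B_n \to C_n \to B_{n-1} \to 0
\]
for every $n$. Since the complex is bounded below in our setting, I would induct upward: once $B_{n-1}$ is known to be projective, this sequence splits and $B_n$ becomes a direct summand of the projective $C_n$, hence itself projective. Applying $\hom_R(-,R)$ to each split short exact sequence yields a split short exact sequence of dual modules, and splicing these back together presents $\hom_R(C_*,R)$ as a complex assembled from split short exact sequences, which is therefore acyclic. Equivalently, one may produce a chain contraction $h$ on $C_*$ directly and observe that $\hom_R(h,R)$ is then a chain contraction on $\hom_R(C_*,R)$.

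The only genuine subtlety, and the step requiring care, is that the projectivity hypothesis is essential: for instance $0 \to \mathbb{Z} \to \mathbb{Q} \to \mathbb{Q}/\mathbb{Z} \to 0$ is an acyclic complex over $R=\mathbb{Z}$, but its $\hom_{\mathbb{Z}}(-,\mathbb{Z})$-dual has nontrivial cohomology. So in writing the lemma cleanly one should either add the hypothesis that $C_*$ consists of projective modules, or rely on the fact that this is automatic in every instance where the lemma is invoked here (the Morse/Floer cochain complexes of covering spaces being freely generated over $\mathbb{Z}[\pi_1]$). Beyond this bookkeeping point the argument is purely homological and presents no real obstacle.
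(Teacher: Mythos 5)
Your observation is correct and it identifies a genuine error in the paper's Lemma~\ref{lemma:univ} as stated: the functor $\hom_R(-,R)$ is only left exact, so without a projectivity (or freeness) hypothesis on the $C_n$ the conclusion fails. Your counterexample is valid: the acyclic complex $0\to\Z\to\Q\to\Q/\Z\to 0$ over $R=\Z$ dualizes to $0\to 0 \to 0\to\Z\to 0$, which has nonzero cohomology. The precise point at which the paper's diagram argument breaks down is the claimed exactness of the right-hand column, namely that the restriction map $C_{n-1}^*\to Z_{n-1}^*$ is surjective; equivalently, the step asserting $\Ima\partial^*=\Ima(Z_{n-2}^*\to C_{n-1}^*)$ implicitly requires $C_{n-2}^*\to Z_{n-2}^*$ to be onto. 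Applying $\hom_R(-,R)$ to the inclusion $Z_n\hookrightarrow C_n$ gives only left exactness, so surjectivity of the restriction map is not automatic; in your counterexample it fails at $C_1^*=\Q^*=0\to Z_1^*=\Z^*=\Z$.

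Your fix is the right one, and the route through chain contractions is cleaner than the paper's diagram chase. Under the hypothesis that each $C_n$ is projective (automatic in every invocation here, where $C_*$ is a Floer/Morse complex free over $\Z[\pi_1]$ and bounded below), the short exact sequences $0\to B_n\to C_n\to B_{n-1}\to 0$ split by an inductive argument, so acyclicity yields a chain contraction; any additive functor, in particular $\hom_R(-,R)$, carries a chain contraction to a chain contraction, giving acyclicity of the dual complex. The paper's diagram argument is also rescued by the same hypothesis, since splitting makes every restriction map $C_n^*\to Z_n^*$ surjective and restores exactness of the offending column. Either way, the lemma's statement (and the remark following it, which claims the result holds ``without any assumption'') should be amended to include the hypothesis that $C_*$ consists of projective $R$-modules and is bounded below, and one should note that this is satisfied in Proposition~\ref{prop:cob}.
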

\begin{proof}
    We use $B_n\subset C_n$ to denote the image of $\partial$ and $Z_n\subset C_n$ to denote the kernel of $\partial$. By assumption, we have $B_n=Z_n$. Note that we have  a tautological short exact sequence,
    $$0\to Z_n\to C_n\to B_{n-1}\to 0.$$
    Then the tautological short exact sequence 
    $$0\to Z_n/B_n \to C_n/B_n \to C_n/Z_n \to 0$$
    is 
    $$0\to 0 \to C_n/B_n \to B_{n-1}\to 0.$$
    If we use $M^*$ to denote $\hom_R(M,R)$ for a $R$-module $M$, then we have the following
    $$
    \xymatrix{
    & & 0 & & \\
    B_n^* & C_n^* \ar[l] & \left(\displaystyle\frac{C_n}{B_n}\right)^* \ar[l]\ar[u] & 0\ar[l] & 0\\
    & & B_{n-1}^*\ar[u] & & Z_{n-1}^*\ar[ll]^{\simeq} \ar[u]  \\
    & & 0\ar[u] & & C_{n-1}^*\ar[ull]\ar[u]}
    $$
    where all vertical and horizontal lines are exact.
    
    Since the coboundary $\partial^*$ on $C^*$ is defined as the map from $C_{n-1}^*$ to $C_n^*$ in the diagram above. As a consequence, we have $\ker \partial^*$ is $\ker [C_{n-1}^*\to B_{n-1}^*]$. On the other hand, we have $\Ima \partial^*=\Ima (Z_{n-2}^*\to C^*_{n-1})$. Since $B_{n-2}^*\simeq Z_{n-2}^*$ and $(C_{n-1}/B_{n-1})^*\simeq B_{n-2}^*$, we have $\Ima \partial^*=\Ima ((C_{n-1}/B_{n-1})^*\to C^*_{n-1})=\ker [C_{n-1}^*\to B_{n-1}^*] = \ker \partial^*$ by the exactness of the topmost row. That is $H^*(\hom_R(C_*,R))=0$.
\end{proof}
\begin{remark}
    The general universal coefficient theorem (which computes cohomology from homology) needs some assumptions on $R$ or $C_*$, see \cite[\S 3.6.5]{MR1269324}. The above version without any assumption works because $C_*$ is acyclic, which is sufficient for our purpose in Proposition \ref{prop:cob}.
\end{remark}

\begin{proposition}\label{prop:cob}
	If $\pi_1(Y)$ is abelian, then any exact/symplectically aspherical filling $W$ is $V\times \D$ glued with an $h$-cobordism from $Y$ to $Y$.
\end{proposition}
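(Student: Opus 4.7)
My plan is to upgrade the homology cobordism $X$ from Proposition \ref{prop:homologycob} to an $h$-cobordism by verifying the $\pi_1$-isomorphism condition and the universal-cover homology isomorphism condition at each of the two boundary components. The $\pi_1$ part is essentially automatic from Proposition \ref{prop:fundamental}: that result gives $\pi_1(Y)\to\pi_1(W)$ is an isomorphism, and since $W=W_0\cup_Y X$ with $W_0\simeq V\times\D$ already satisfying $\pi_1(Y)\to\pi_1(W_0)$ an isomorphism (as $V\times\D$ is built from handles of index at most $2n-1$), van Kampen forces both $\pi_1(\partial_0 X)\to\pi_1(X)$ and $\pi_1(X)\to\pi_1(W)$ to be isomorphisms. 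Composing with the known isomorphism $\pi_1(\partial W)\to\pi_1(W)$ handles the $\partial_1 X$ side as well, and ensures the universal covers fit together consistently.

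For the homology condition at the inner boundary, I would first argue that $H^*(\widetilde W)\to H^*(\widetilde{W_0})$ is an isomorphism. Proposition \ref{prop:fundamental} says $H^*(\widetilde W)\to H^*(\widetilde Y)$ is injective with image independent of the filling, and applying this both to $W$ and to the standard filling $W_0$ (for which the analogous map is also injective and has the same image) gives, via the factorization $H^*(\widetilde W)\to H^*(\widetilde{W_0})\to H^*(\widetilde Y)$, that the first arrow is both injective (from injectivity of the composition) and surjective (by comparing images, using that the second arrow is injective). By excision and the long exact sequence of a pair, this yields $H^*(\widetilde X,\widetilde{\partial_0 X};\Z)=H^*(\widetilde W,\widetilde{W_0};\Z)=0$. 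The universal coefficient theorem applied to the free abelian cellular chain complex $C_*(\widetilde X,\widetilde{\partial_0 X})$ then gives $\Hom(H_k,\Z)=\Ext(H_{k-1},\Z)=0$ in every degree, and Lemma \ref{lemma:ext} concludes that $H_*(\widetilde X,\widetilde{\partial_0 X};\Z)=0$.

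For the outer boundary I would invoke Poincar\'e--Lefschetz duality with local $\Z[\pi]$-coefficients, where $\pi=\pi_1(Y)$. The previous vanishing is equivalent to $H_*(X,\partial_0 X;\Z[\pi])=0$, so the relative cellular complex $C_*(X,\partial_0 X;\Z[\pi])$ is acyclic and free over $\Z[\pi]$; Lemma \ref{lemma:univ} then gives $H^*(X,\partial_0 X;\Z[\pi])=0$. Lefschetz duality exchanging the two boundary components yields $H_*(X,\partial_1 X;\Z[\pi])\cong H^*(X,\partial_0 X;\Z[\pi])=0$, which translates back to $H_*(\widetilde X,\widetilde{\partial_1 X};\Z)=0$. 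Combined with the $\pi_1$ isomorphisms from the first step, Whitehead's theorem then identifies $X$ as an $h$-cobordism.

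The only genuinely delicate point is the back-and-forth between ordinary cohomology of the (typically noncompact) universal cover, which is what Proposition \ref{prop:fundamental} directly controls through symplectic cohomology of covering spaces, and $\Z[\pi]$-coefficient (co)homology, which is the natural framework in which Lefschetz duality swaps the two boundary components. Lemmas \ref{lemma:ext} and \ref{lemma:univ} are precisely the algebraic bridges between these two languages; the strategy uses each exactly once, and I expect this conversion to be where the main subtlety lies rather than in any new geometric input.
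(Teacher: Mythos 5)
Your proposal is correct and follows essentially the same route as the paper: Proposition \ref{prop:fundamental} plus van Kampen for the $\pi_1$ isomorphisms, excision together with Lemma \ref{lemma:ext} to pass from vanishing cohomology of $(\widetilde X,\widetilde{\partial_0 X})$ to vanishing integral homology (equivalently $\Z[\pi]$-homology), Lemma \ref{lemma:univ} to get vanishing $\Z[\pi]$-cohomology, and Lefschetz duality with local coefficients to handle the other boundary, finishing with Whitehead's theorem. The only difference is that you spell out the injectivity-plus-image-comparison step for $H^*(\widetilde W)\to H^*(\widetilde{W_0})$ slightly more explicitly than the paper does.
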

\begin{proof}
	Let $X$ be the homology cobordism from $\partial W_0$ to $\partial W$ in the proof of Proposition \ref{prop:homologycob}. By Proposition \ref{prop:fundamental}, we know that $\partial W_0\hookrightarrow W_0\hookrightarrow W$ both induce isomorphisms on $\pi_1$. Then the van Kampen theorem implies that the following push-out diagram consists of isomorphisms,
	$$\xymatrix{\pi_1(\partial W_0) \ar[r]\ar[d] & \pi_1(W_0)\ar[d]\\
		\pi_1(X) \ar[r] & \pi_1(W)}$$
	Then we have $\pi_1(\partial W) \to \pi_1(X)$ is an isomorphism since  $\pi_1(\partial W) \to \pi_1(W)$ is an isomorphism. Applying the argument in Theorem \ref{thm:main} to the universal cover, we have $H^*(\widetilde{W})\to H^*(\widetilde{W_0})$ is an isomorphism by Proposition \ref{prop:fundamental}. Hence by excision, we have $H^*(\widetilde{X}, \widetilde{\partial W_0})=0$. Then by universal coefficient and Lemma \ref{lemma:ext}, we have $H_*(\widetilde{X},\widetilde{\partial W_0})=H_*(X,\partial W_0;\Z[\pi_1])=0$.
	Then by Lemma \ref{lemma:univ} for $R=\Z[\pi_1]$, we have $H^*(X,\partial W_0;\Z[\pi_1])=0$. Then by the Lefschetz duality with twisted coefficients, $H_*(X,\partial W;\Z[\pi_1])=H_*(\widetilde{X},\widetilde{\partial W})=0$. Therefore $X$ is an $h$-cobordism by Whitehead's theorem. 
\end{proof}

\begin{proof}[Proof of Theorem \ref{thm:diff}]
	If the Whitehead group of $Y$ is trivial, then the $h$-cobordism is a trivial cobordism by the $s$-cobordism theorem \cite{milnor1966whitehead}. Hence $W$ is diffeomorphic to $V\times \D$. In general, we can apply the Mazur trick, see \cite{milnor1966whitehead}. That is there is an $h$-cobordism $X'$ from $Y$ to itself such that the concatenations $X\circ X'$ and $X'\circ X$ are trivial cobordisms. Note that $\mathring{W}$ is diffeomorphic to $\widehat{W}$, which is diffeomorphic to $\ldots \circ X\circ X'\circ W$, i.e.\ attaching infinite $X\circ X'$ to $W$. On the other hand, it is $\ldots \circ X \circ X'\circ X \circ V\times \D$, which is diffeomorphic to $\widehat{V}\times \C$ or the interior of $V\times \D$.    
\end{proof}
\begin{remark}
	The Whitehead torsion can be put into the framework of Floer theories \cite{abouzaid2018simple}. One can prove that the Whitehead torsion of the cochain map underlying the isomorphism $SH_+^*(W) \to H^{*+1}(Y)\to H^{*+1}(V\times \{1\})$ has zero Whitehead torsion assuming $\pi_1(Y)$ is abelian. What we still need is that the Whitehead torsion of $SH_+^*(W) \to H^{*+1}(W)$ is zero. 
\end{remark}

It is very likely that the diffeomorphism type of the filling is unique for any Liouville domain $V$, or at least the homotopy type is unique. However, this requires better ways to probe homotopy groups of the filling, hence we end the paper by asking the following question.
\begin{question}
    Is there a Floer theoretic interpretation of homotopy groups? In particular, is it true that $\pi_k(W)\to \pi_k(Y)$ are independent of exact fillings for any Liouville domain $V$ and $k\ge 1$? What can we say about the Whitehead product on $\pi_k(W)$.	
\end{question}



\bibliographystyle{plain} 
\bibliography{ref}

\begin{thebibliography}{10}

\bibitem{abouzaid2018simple}
Mohammed Abouzaid and Thomas Kragh.
\newblock Simple homotopy equivalence of nearby {L}agrangians.
\newblock {\em Acta Math.}, 220(2):207--237, 2018.

\bibitem{barth2016diffeomorphism}
Kilian Barth, Hansj\"{o}rg Geiges, and Kai Zehmisch.
\newblock The diffeomorphism type of symplectic fillings.
\newblock {\em J. Symplectic Geom.}, 17(4):929--971, 2019.

\bibitem{bourgeois2003compactness}
F.~Bourgeois, Y.~Eliashberg, H.~Hofer, K.~Wysocki, and E.~Zehnder.
\newblock Compactness results in symplectic field theory.
\newblock {\em Geom. Topol.}, 7:799--888, 2003.

\bibitem{bourgeois2002morse}
Frederic Bourgeois.
\newblock {\em A {M}orse-{B}ott approach to contact homology}.
\newblock ProQuest LLC, Ann Arbor, MI, 2002.
\newblock Thesis (Ph.D.)--Stanford University.

\bibitem{bourgeois2009symplectic}
Fr\'{e}d\'{e}ric Bourgeois and Alexandru Oancea.
\newblock Symplectic homology, autonomous {H}amiltonians, and {M}orse-{B}ott
  moduli spaces.
\newblock {\em Duke Math. J.}, 146(1):71--174, 2009.

\bibitem{cieliebak2002subcritical}
Kai Cieliebak.
\newblock Subcritical stein manifolds are split.
\newblock {\em arXiv preprint math/0204351}, 2002.

\bibitem{cieliebak2018symplectic}
Kai Cieliebak and Alexandru Oancea.
\newblock Symplectic homology and the {E}ilenberg-{S}teenrod axioms.
\newblock {\em Algebr. Geom. Topol.}, 18(4):1953--2130, 2018.
\newblock Appendix written jointly with Peter Albers.

\bibitem{eliashberg1990filling}
Yakov Eliashberg.
\newblock Filling by holomorphic discs and its applications.
\newblock 151:45--67, 1990.

\bibitem{gromov1985pseudo}
M.~Gromov.
\newblock Pseudo holomorphic curves in symplectic manifolds.
\newblock {\em Invent. Math.}, 82(2):307--347, 1985.

\bibitem{hatcher2002algebraic}
Allen Hatcher.
\newblock {\em Algebraic topology}.
\newblock Cambridge University Press, Cambridge, 2002.

\bibitem{lazarev2016contact}
Oleg Lazarev.
\newblock Contact manifolds with flexible fillings.
\newblock {\em Geom. Funct. Anal.}, 30(1):188--254, 2020.

\bibitem{mcduff1990structure}
Dusa McDuff.
\newblock The structure of rational and ruled symplectic {$4$}-manifolds.
\newblock {\em J. Amer. Math. Soc.}, 3(3):679--712, 1990.

\bibitem{mcduff2012j}
Dusa McDuff and Dietmar Salamon.
\newblock {\em {$J$}-holomorphic curves and symplectic topology}, volume~52 of
  {\em American Mathematical Society Colloquium Publications}.
\newblock American Mathematical Society, Providence, RI, second edition, 2012.

\bibitem{mclean2016reeb}
Mark McLean.
\newblock Reeb orbits and the minimal discrepancy of an isolated singularity.
\newblock {\em Invent. Math.}, 204(2):505--594, 2016.

\bibitem{milnor1966whitehead}
J.~Milnor.
\newblock Whitehead torsion.
\newblock {\em Bull. Amer. Math. Soc.}, 72:358--426, 1966.

\bibitem{oancea2006kunneth}
Alexandru Oancea.
\newblock The {K}\"{u}nneth formula in {F}loer homology for manifolds with
  restricted contact type boundary.
\newblock {\em Math. Ann.}, 334(1):65--89, 2006.

\bibitem{oancea2008fibered}
Alexandru Oancea.
\newblock Fibered symplectic cohomology and the {L}eray-{S}erre spectral
  sequence.
\newblock {\em J. Symplectic Geom.}, 6(3):267--351, 2008.

\bibitem{oancea2012topology}
Alexandru Oancea and Claude Viterbo.
\newblock On the topology of fillings of contact manifolds and applications.
\newblock {\em Comment. Math. Helv.}, 87(1):41--69, 2012.

\bibitem{ritter2014floer}
Alexander~F. Ritter.
\newblock Floer theory for negative line bundles via {G}romov-{W}itten
  invariants.
\newblock {\em Adv. Math.}, 262:1035--1106, 2014.

\bibitem{MR1269324}
Charles~A. Weibel.
\newblock {\em An introduction to homological algebra}, volume~38 of {\em
  Cambridge Studies in Advanced Mathematics}.
\newblock Cambridge University Press, Cambridge, 1994.

\bibitem{wendl2010strongly}
Chris Wendl.
\newblock Strongly fillable contact manifolds and {$J$}-holomorphic foliations.
\newblock {\em Duke Math. J.}, 151(3):337--384, 2010.

\bibitem{zhou2019symplectic2}
Zhengyi Zhou.
\newblock Symplectic fillings of asymptotically dynamically convex manifolds
  {II}--{$k$}-dilations.
\newblock {\em arXiv preprint arXiv:1910.06132}, 2019.

\bibitem{zhou2020mathbb}
Zhengyi Zhou.
\newblock {$(\mathbb{RP}^{2n-1},\xi_{\rm std})$} is not exactly fillable for
  {$n\ne 2^k$}.
\newblock {\em Geom. Topol.}, 25(6):3013--3052, 2021.

\bibitem{zhou2019symplectic}
Zhengyi Zhou.
\newblock Symplectic fillings of asymptotically dynamically convex manifolds
  {I}.
\newblock {\em J. Topol.}, 14(1):112--182, 2021.

\end{thebibliography}

\end{document}